\newcommand{\mc}[1]{{\mathcal{#1}}}
\newcommand{\mf}[1]{{\mathfrak{#1}}}
\newcommand{\bb}[1]{{\mathbb{#1}}}
\DeclareMathOperator{\RE}{Re}
\DeclareMathOperator{\IM}{Im}
\renewcommand{\Re}{\RE}
\renewcommand{\Im}{\IM}
\DeclareMathOperator{\spr}{spr}
\DeclareMathOperator{\reg}{reg}
\DeclareMathOperator{\Cl}{\text{\it{c}}\hspace{-0.8pt}\ell\hspace{-0.2pt}}
\DeclareMathOperator{\dom}{dom}
\DeclareMathOperator{\mul}{mul}
\DeclareMathOperator{\ran}{ran}
\newcommand{\smm}[1][\alpha & \beta \\ \gamma & \delta]{\bigl(\begin{smallmatrix} #1 
\end{smallmatrix}\bigr)}
\newlength{\maxlabwidth}
\numberwithin{equation}{section}
\theoremstyle{plain}
	\newtheorem{lemma}{Lemma}[section]
	\newtheorem{proposition}[lemma]{Proposition}
	\newtheorem{theorem}[lemma]{Theorem}
	\newtheorem{corollary}[lemma]{Corollary}
	\newtheorem{ntheoreM}[lemma]{}
\theoremstyle{definition}
	\newtheorem{definitioN}[lemma]{Definition}
\theoremstyle{remark}
	\newtheorem{remarK}[lemma]{Remark}
	\newtheorem{examplE}[lemma]{Example}
	\newtheorem{nremarK}[lemma]{}
\newcommand{\thlab}[1]{\thlabel{#1}\label{#1.}}
\renewcommand{\qedsymbol}{\raisebox{-2pt}{\large\ding{113}}}
\newcommand{\defendsymbol}{$\lozenge$}
\newcommand{\qedsymbolsave}{\qedsymbol}
\newenvironment{definition}{\begin{definitioN}}{
	\renewcommand{\qedsymbolsave}{\qedsymbol}\renewcommand{\qedsymbol}{\defendsymbol}
	\popQED{\qed}\renewcommand{\qedsymbol}{\qedsymbolsave}\end{definitioN}}
\newenvironment{remark}{\begin{remarK}}{
	\renewcommand{\qedsymbolsave}{\qedsymbol}\renewcommand{\qedsymbol}{\defendsymbol}
	\popQED{\qed}\renewcommand{\qedsymbol}{\qedsymbolsave}\end{remarK}}
\newenvironment{example}{\begin{examplE}}{
	\renewcommand{\qedsymbolsave}{\qedsymbol}\renewcommand{\qedsymbol}{\defendsymbol}
	\popQED{\qed}\renewcommand{\qedsymbol}{\qedsymbolsave}\end{examplE}}
\begin{document}
\begin{flushleft}
	{\Large\bf Functional Calculus for definitizable self-adjoint linear relations on Krein spaces}
	\\[5mm]
	\textsc{Michael Kaltenb\"ack, Raphael Pruckner\footnote{This work was supported by a joint project of the Austrian Science Fund 
		  (FWF, I1536--N25) and the Russian Foundation for Basic Research (RFBR, 13-01-91002-ANF).}}
	\\[5mm] 
	\textit{\small Dedicated to Heinz Langer on the occasion of his 80th birthday.} 
	\\[6mm]
	{\small
	\textbf{Abstract}: In the present note a functional calculus $\phi \mapsto \phi(A)$ for self-adjoint 
		definitizable linear relations on Krein spaces is developed. This functional calculus is the 
		proper analogue of $\phi \mapsto \int \phi \, dE$ in the Hilbert space situation
		where $\phi$ is a bounded and measurable function on $\sigma(A)$
		and $\int \phi \, dE$ is defined in the weak sense. The derived functional calculus also 
		comprises the Spectral Theorem for self-adjoint definitizable operators on Krein spaces
		showing the existence of spectral projections.
	}
\end{flushleft}

\begin{flushleft}
   {\small
   {\bf Mathematics Subject Classification (2010):} 47A60, 47A06, 47B50, 47B25. 
   }
\end{flushleft}
\begin{flushleft}
   {\small
   {\bf Keywords:} Krein space, definitizable linear relations, spectral theorem.
   }
\end{flushleft}

\section{Introduction}

In this paper we present a new access to the spectral theorem for 
definitizable, self-adjoint linear relations on Krein spaces $\mc K$.
The spectral theorem for definitizable, self-adjoint operators $A$ with non-empty resolvent sets 
was first shown by Heinz Langer (see \cite{langer1982}). 
He used Riesz projections in order to reduce the problem to the case that
the spectrum of $A$ is contained in $\bb R$. Then, certain
Cauchy-type integrals gave the desired spectral projectors $F(\Delta)$.

In the paper \cite{jonas1981} by Peter Jonas, another way was taken in order to derive the spectral theorem.
After getting rid of the non-real spectrum by the same means as Heinz Langer did, 
Peter Jonas considered the rational functional calculus for densely defined operators
with non-empty resolvent sets, and extended this calculus
to the class $C^\infty(\bb R \cup \{\infty\})$ of infinitely often differentiable functions on $\bb R \cup \{\infty\}$.
For intervals $\Delta$ with endpoints not belonging to the set of critical spectral points
of $A$, a monotone approximation procedure for the characteristic function $\mathds{1}_\Delta$
by $C^\infty(\bb R \cup \{\infty\})$-functions was used in order to obtain the spectral projectors $F(\Delta)$.

Later on, using his theory of locally definitizable operators 
Peter Jonas 
extended the existence of the spectral projections to the case that $A$ 
is a definitizable, self-adjoint linear relation with a non-empty resolvent set; see \cite{jonas1986} and
\cite{jonas2003}. Hereby, definitizability means that $[q(A)x,x]\geq 0$ for all $x \in \mc K$ and some
rational function $q$ whose poles are contained in $\rho(A)$. We call $q$ a definitizing rational function for $A$.
$q(A)$ is well-defined by the rational functional calculus for linear relations with non-empty resolvent sets.
Focusing on the existence of spectral projections all versions of the Spectral Theorem for definitizable
linear relations so far are rather technical and contain tedious calculations. 

In the present work we shall derive a version of the spectral theorem for definitizable, self-adjoint linear 
relations $A$ with non-empty resolvent sets in Krein spaces $\mc K$,
which not only gives the existence of spectral projections, but also the possibility to define a
$B(\mc K)$-valued functional calculus very similar to the functional calculus $\phi\mapsto \int \phi \, dE$ 
for self-adjoint operators on Hilbert spaces. Moreover, the methods used here are of 
structural nature and contain much less calculations compared to previous versions.   
The proper family $\mc F(q,A)$ of functions suitable for our functional 
calculus are bounded and measurable functions on $\sigma(A)$, which assume values in $\bb C$ on 
$\sigma(A)\setminus q^{-1}\{0\}$ and values in $\bb C^{\mf d(w)}$ for a certain $\mf d(w)\in \bb N$ on points 
$w\in q^{-1}\{0\} \cap \sigma(A)$. Moreover, the functions from $\mc F(q,A)$ have to satisfy a certain 
regularity condition at points $w\in q^{-1}\{0\} \cap \sigma(A)$ which are accumulation points of $\sigma(A)$.

In the present paper we combine ideas from \cite{jonas1986}, \cite{dritschel1993}, 
\cite{dritschelrovnyak1996} and from the theory of linear relations.
Starting  with a Krein space $\mc K$, a definitizable, self-adjoint linear relation $A$ with a non-empty resolvent set
and a definitizing rational function $q$, we construct a Hilbert space $\mc V$, which is 
the completion of $\mc K$ with respect to $[q(A).,.]$. After this we define $T: \mc V \to \mc K$
via its Krein space adjoint $T^+: \mc K \to \mc V$, $T^+(x):=x + \{y\in\mc K : [q(A)y,y] = 0\}$.
$T: \mc V \to \mc K$ is an injective, bounded linear mapping such that 
$TT^+ = q(A)$. Then, we consider the linear relation 
$\Theta(A):=(T\times T)^{-1}(A)$ on $\mc V$, which turns out to be self-adjoint. Its spectrum coincides with 
$\sigma(A)$ up to a finite subset of $q^{-1}\{0\}$.
Having the Hilbert space version of the spectral theorem in hand, we can consider $T \int g \, dE \ T^+ \in B(\mc K)$
for any bounded and measurable $g:\sigma(A) \to \bb C$, where $E$ is the spectral measure corresponding to $\Theta(A)$.

In \cite{dritschel1993} the author considered something very similar to $T \int g \, dE \ T^+$ in the case that 
$A$ is an operator and he denoted this expression by $q(A)g(A)$. So he derived something like 
a functional calculus for functions of the form
$q g$, where $g$ is bounded and measurable and where $q$ is a definitizing polynomial.

In our setting, any function $\phi$ from $\mc F(q,A)$ can be decomposed as 
$\phi= s + q g$ where $s$ is a rational function with poles contained in $\rho(A)$ and
$g$ is bounded and measurable. It will turn out that then $\phi\mapsto \phi(A) := s(A) + T \int g \, dE T^+$
constitutes a functional calculus. This is the proper analogue
of $\phi\mapsto\int \phi \, dE$ in the Hilbert space situation. 

Since bounded and measurable functions 
$f: \dom(f) \to \bb C$ with $\sigma(A)\subseteq \dom(f)$, 
which are holomorphic locally at $q^{-1}\{0\} \cap \sigma(A)$,
can be considered as elements of $\mc F(q,A)$ in a canonical way, 
$f(A)$ makes sense for a wide class of functions $f$.
For example, for Borel subsets $\Delta$ of $\bb R\cup\{\infty\}$ such that 
$\partial \Delta \cap q^{-1}\{0\} \cap \sigma(A) = \emptyset$,
the spectral projections $F(\Delta)$ derived by Langer and Jonas
are in our notation nothing else but $\mathds{1}_\Delta(A)$.

Moreover, with our access to the spectral theorem for definitizable linear relations, it is no longer necessary to
split off the non-real part of the spectrum. In fact, the non-real spectral points can be treated like the
points from the real part. Using our new access to the spectral theorem for self-adjoint, definitizable linear relations,
it should be possible to develop also a spectral theorem for normal, definitizable linear relations, which will
be the subject of a forthcoming paper.

At the beginning of the present paper, we recall some well known concepts from the theory of linear relations.
First we recall the M\"obius-Calculus together with elementary definitions. Then we shortly recall
the rational functional calculus for linear relations. After this, in Section \ref{movlinrel}, we will study elementary 
properties of linear relations of the form $(T\times T)(B) = TBT^{-1}$ and $(T\times T)^{-1}(A)=T^{-1}AT$, where
$T: \mc V \to \mc K$ is an everywhere defined linear mapping and $A$ ($B$) is a linear relation on $\mc K$ ($\mc V$). 
In Section \ref{movlinrelrein} these studies are continued under the assumptions that $\mc K$ is a Krein space, 
$\mc V$ is a Hilbert space and that $T: \mc V \to \mc K$ is one-to-one. We will see that
$A\mapsto (T\times T)^{-1}(A)$ gives rise to
a $*$-algebra homomorphism $\Theta: (TT^+)' \to (T^+T)'$ where $(TT^+)'\subseteq B(\mc K)$ ($(T^+T)'\subseteq B(\mc V)$) 
denotes the commutant of $TT^+$ ($T^+T$). The mapping $\Xi:D\mapsto TDT^+$ from $(T^+T)'$ into $(TT^+)'$ acts in the 
opposite direction and satisfies $\Xi \circ \Theta(C)=C TT^+$.
In Section \ref{deflinrel}, the derived results are applied to the situation when $\mc V$ and $T$
come from a definitizable linear relations $A$ as indicated above.
In the final section we define the class $\mc F(q,A)$ of functions $\phi$
for which we are able to define $\phi(A)$, and prove in \thref{zwe4578} the main result of the present note. 
Providing $\mc F(q,A)$ with a proper topology, we will see
in \thref{calculstetig} that our functional calculus is continuous.

\section{M\"obius-Calculus for Linear Relations}

First, we would like to recall the M\"obius type transformation of a linear relation as discussed, for instance, in 
\cite{dijksmadesnoo1987b}.

Let $\mc K$ be a vector space. For any $2 \times 2$-matrix $M=\smm[\alpha & \beta \\ \gamma & \delta]\in\mathbb C^{2\times 2}$, 
we define $\tau_M : \mc K\times \mc K \to \mc K\times \mc K$ via its block structure as
\[
    \tau_M := \begin{pmatrix} \delta I & \gamma I \\ \beta I & \alpha I \end{pmatrix} \,,
\]
i.e.\
\[
	\tau_M(x;y) = (\delta x+\gamma y; \beta x + \alpha y) \ \ \text{ for all } \ \ (x;y)\in \mc K\times \mc K \,.
\]

The good thing about this transformation is that 
for a linear relation $A$ on $\mc K$, i.e.\ a linear subspaces of $\mc K \times \mc K$,  
other linear relations like $A-\lambda I$, $\lambda A$, $A^{-1}$ and, more generally, $(\alpha A + \beta I)$, 
$\alpha I + \beta(A+\gamma)^{-1}$
can be expressed as $\tau_M(A)$ with an appropriate matrix $M\in \mathbb C^{2\times 2}$:
\begin{equation}\label{bspfmoeb}
(\alpha A + \beta I) = \tau_{\smm[ \alpha & \beta \\ 0 & 1]}(A), \quad
\alpha + \beta(A+\gamma)^{-1}=  
		\tau_{\smm[ \alpha & \beta + \gamma \alpha \\ 1 & \gamma]}(A) \,.
\end{equation}

We recall from \cite{dijksmadesnoo1987b}:

\begin{lemma}\thlab{moebtrafoeig}
    Let $\mc K$ be a vector space and let $M, N\in\mathbb C^{2\times 2}$. 
    Then $\tau_N\circ\tau_M = \tau_{NM}$.
    If $M\in\mathbb C^{2\times 2}$ is invertible, then $(\tau_M)^{-1}=\tau_{M^{-1}}$.
\end{lemma}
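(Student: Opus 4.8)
The plan is to verify both claims by direct computation at the level of the defining block matrices, exploiting the fact that $\tau_M$ is defined purely through the assignment $M \mapsto \tau_M$ sending $\smm[\alpha & \beta \\ \gamma & \delta]$ to $\bigl(\begin{smallmatrix} \delta I & \gamma I \\ \beta I & \alpha I \end{smallmatrix}\bigr)$. The essential observation is that this assignment, although it transposes and swaps the entries of $M$, is an \emph{anti}-homomorphism of $2\times 2$ matrices when read naively, which is exactly why the composition law comes out as $\tau_N\circ\tau_M=\tau_{NM}$ rather than $\tau_{MN}$.

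First I would fix $M=\smm[\alpha & \beta \\ \gamma & \delta]$ and $N=\smm[\alpha' & \beta' \\ \gamma' & \delta']$ and simply evaluate $(\tau_N\circ\tau_M)(x;y)$ on an arbitrary pair $(x;y)\in\mc K\times\mc K$. Applying $\tau_M$ first gives $(\delta x+\gamma y;\,\beta x+\alpha y)$, and then applying $\tau_N$ to this pair yields
\[
\bigl(\delta'(\delta x+\gamma y)+\gamma'(\beta x+\alpha y);\ \beta'(\delta x+\gamma y)+\alpha'(\beta x+\alpha y)\bigr).
\]
On the other hand, the product $NM$ has entries that I would compute explicitly, and then form $\tau_{NM}$ via the same block rule. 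Reading off the coefficients of $x$ and $y$ in each slot, I expect the first slot to produce $\delta'\delta+\gamma'\beta$ in front of $x$ (which is the $(2,2)$-entry of $NM$, as the block rule demands) and similarly for the remaining three coefficients. The content of the lemma is precisely that these two four-tuples of scalars agree, and since everything is linear in $x$ and $y$ the identity of maps follows from the identity of the scalar coefficients.

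For the second assertion I would invoke the first: when $M$ is invertible, applying the composition law twice gives $\tau_{M^{-1}}\circ\tau_M=\tau_{M^{-1}M}=\tau_I$ and $\tau_M\circ\tau_{M^{-1}}=\tau_{MM^{-1}}=\tau_I$. It then remains only to note that $\tau_I$ is the identity map on $\mc K\times\mc K$, which is immediate from the block formula since $I=\smm[1 & 0 \\ 0 & 1]$ gives $\bigl(\begin{smallmatrix} I & 0 \\ 0 & I \end{smallmatrix}\bigr)$. Hence $\tau_M$ is bijective with two-sided inverse $\tau_{M^{-1}}$, establishing $(\tau_M)^{-1}=\tau_{M^{-1}}$.

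There is no genuine obstacle here; the only point requiring care is the bookkeeping induced by the transpose-and-swap nature of the block rule, which is exactly what makes the composition respect $NM$ (in that order) rather than $MN$. I would therefore present the coefficient comparison cleanly enough that the reader sees the order $NM$ emerge naturally from tracking which entry of $N$ multiplies which entry of $M$, rather than stating it as a surprise. Everything else is routine linearity, and the invertibility statement is a formal consequence once the composition law is in hand.
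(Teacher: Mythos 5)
Your proof is correct, and since the paper does not actually prove this lemma --- it is recalled from the literature (Dijksma--de Snoo) --- your direct coefficient comparison is exactly the argument one would supply; the coefficients you read off do match the entries of $NM$ under the block rule, and the invertibility statement follows formally once one notes $\tau_I=\mathrm{id}$.

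One caveat: the ``essential observation'' in your opening paragraph is stated backwards. The entry rearrangement $\smm \mapsto \smm[\delta & \gamma \\ \beta & \alpha]$ is conjugation $M\mapsto JMJ$ by the involution $J=\smm[0 & 1 \\ 1 & 0]$, hence a \emph{homomorphism} of $2\times 2$ matrices, not an anti-homomorphism; since composing operators on $\mc K\times\mc K$ multiplies their block matrices in the same order, it is precisely this homomorphism property that produces $\tau_N\circ\tau_M=\tau_{NM}$. Had the assignment been anti-homomorphic, the composition law would have come out as $\tau_N\circ\tau_M=\tau_{MN}$ instead. This slip does not affect your actual computation, which is correct as written, but the framing sentence should be corrected or dropped.
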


The resolvent set and the spectrum of a linear relation $A$ on a Banach space $\mc K$ are defined almost 
as in the operator case. In fact,
\[
	\rho(A):= \{ \lambda \in \bb C \cup \{ \infty \} : (A-\lambda)^{-1}\in B(\mc K)\},
\]
and, as usual, $\sigma(A):= (\mathbb C \cup \{\infty\}) \backslash\rho(A)$.
Here, $(A-\lambda)^{-1}$ exists always as a linear relation on $\mc K$, and $\lambda\in \rho(A)$ just means that 
this linear relation is actually an everywhere defined and bounded operator on $\mc K$, i.e. an element of $B(\mc K)$. 
Also note that $(A-\infty)^{-1}:=A$. Hence, $\infty\in \rho(A)$ just means that $A$ is a bounded and everywhere
defined operator. Accordingly, we  set  
\[
    \ran (A-\infty):=\dom A = \{x : (x;y) \in A \ \text{ for some } \ y \in \mc K\} \,,
\]
\[
    \dom (A-\infty):=\ran A= \{y : (x;y) \in A \ \text{ for some } \ x \in \mc K\} \,.
\]
A slightly more general concept as the resolvent set is the set of \emph{points of regular type} for $A$,
\[
  \reg(A):=\{ \lambda \in \mathbb C \cup \{\infty\} : (A-\lambda)^{-1} \in B( \ran (A-\lambda)) \} \supseteq \rho(A) \,.
\]

For the following assertion, see \cite{dijksmadesnoo1987b}.
\begin{theorem}\thlab{spectrtrans}
	Let $A$ be a linear relation on a Banach space $\mc K$, and let 
	$M =\smm[\alpha & \beta \\ \gamma & \delta]\in\mathbb C^{2\times2}$ be invertible.
	Then, we have
	\[
		    \reg(\tau_M(A)) = \phi_{M}(\reg(A)),  \ \
				\rho(\tau_M(A)) = \phi_{M}(\rho(A)), \ \
				\sigma(\tau_M(A)) = \phi_{M}(\sigma(A)) \,,
	\] 
	where $\phi_M: \mathbb C \cup \{\infty\} \to \mathbb C \cup \{\infty\}$ 
	denotes the M\"obius transform $z \mapsto \frac{\alpha z+ \beta}{\gamma z + \delta}$ related to $M$. 
\end{theorem}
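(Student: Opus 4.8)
The plan is to fix an invertible $M=\smm$ and reduce all three assertions to a single point-wise statement: for every $\mu\in\bb C\cup\{\infty\}$ one has $\mu\in\rho(A)$ if and only if $\phi_M(\mu)\in\rho(\tau_M(A))$, and likewise with $\reg$ in place of $\rho$. Granting this, the $\rho$- and $\reg$-identities follow because $\phi_M$ is a bijection of $\bb C\cup\{\infty\}$ (as $M$ is invertible), and the $\sigma$-identity then drops out by taking complements, using again that $\phi_M$ is a bijection. The point of handling $\rho$ and $\reg$ simultaneously is that both are defined by requiring the relation $(A-\lambda)^{-1}$ to lie in a class $\mc C$ --- the everywhere defined bounded operators $B(\mc K)$ for $\rho$, the bounded operators on their range for $\reg$ --- and both classes will behave the same way under the transforms below.

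The key step is to express the resolvent of $\tau_M(A)$ through that of $A$. Using the sample identities in \eqref{bspfmoeb} (to subtract $\lambda$), the special case $A^{-1}=\tau_{\smm[0 & 1 \\ 1 & 0]}(A)$ of \eqref{bspfmoeb}, and the composition rule $\tau_N\circ\tau_M=\tau_{NM}$ from \thref{moebtrafoeig}, I would first write $(\tau_M(A)-\lambda)^{-1}=\tau_{N_\lambda}(A)$ and $(A-\mu)^{-1}=\tau_{P_\mu}(A)$ for explicit matrices $N_\lambda,P_\mu$, and then combine them into $(\tau_M(A)-\lambda)^{-1}=\tau_{S}\bigl((A-\mu)^{-1}\bigr)$ with $S:=N_\lambda P_\mu^{-1}$. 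A short computation shows that the lower-left entry of $S$ equals $\alpha\mu+\beta-\lambda(\gamma\mu+\delta)$, which vanishes precisely when $\lambda=\phi_M(\mu)$. Thus, for the correct pairing $\lambda=\phi_M(\mu)$ the matrix $S$ is upper triangular, say $S=\smm[a & b \\ 0 & d]$, and invertibility of $M$ forces $ad\neq 0$. For such $S$ the transform $\tau_S$ sends the graph of a bounded operator $B$ to the graph of $\tfrac1d(aB+bI)$ on the same domain; hence $\tau_S$ maps $\mc C$ into $\mc C$, and since $S^{-1}$ is again upper triangular with nonzero diagonal, so does $\tau_{S^{-1}}=(\tau_S)^{-1}$. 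Therefore $(A-\mu)^{-1}\in\mc C$ if and only if $(\tau_M(A)-\lambda)^{-1}\in\mc C$, which is exactly the desired point-wise equivalence (note the domain computation also gives $\ran(\tau_M(A)-\lambda)=\ran(A-\mu)$, so the $\reg$-class is compared over the same space).

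Assembling these equivalences as $\mu$ runs through $\bb C\cup\{\infty\}$ yields $\rho(\tau_M(A))=\phi_M(\rho(A))$ and $\reg(\tau_M(A))=\phi_M(\reg(A))$, and complementation gives the spectrum identity. The one place demanding care --- and the main obstacle --- is the uniform treatment of the points at infinity of the Riemann sphere, where the formulas for $N_\lambda$ and $P_\mu$ degenerate: the cases $\mu=\infty$ (where $(A-\infty)^{-1}=A$, so $P_\mu=I$) and $\phi_M(\mu)=\infty$, i.e.\ $\gamma\mu+\delta=0$ (where $(\tau_M(A)-\infty)^{-1}=\tau_M(A)$), must be checked separately. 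The content of these checks is only to verify that in each degenerate case the resulting $S$ is still upper triangular; invertibility of $M$ then automatically makes its diagonal nonzero, and the argument of the previous paragraph applies verbatim.
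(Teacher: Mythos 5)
Your proposal is correct; note, however, that the paper never proves \thref{spectrtrans} at all --- it simply quotes the result from \cite{dijksmadesnoo1987b} --- so your argument fills a gap the authors left to the literature rather than deviating from an internal proof. Your steps check out. With $N_\lambda=\smm[0 & 1 \\ 1 & -\lambda]M$ and $P_\mu=\smm[0 & 1 \\ 1 & -\mu]$ one gets, for finite $\lambda,\mu$,
\[
  S=N_\lambda P_\mu^{-1}=\smm[\gamma\mu+\delta & \gamma \\ \alpha\mu+\beta-\lambda(\gamma\mu+\delta) & \alpha-\lambda\gamma]\,,
\]
confirming your formula for the lower-left entry; moreover $\det S=\det M\neq 0$, and for an upper triangular invertible $S=\smm[a & b \\ 0 & d]$ the transform $\tau_S$ sends the graph of an operator $B$ to the graph of $\tfrac{1}{d}(aB+bI)$ on exactly the same domain, so it preserves both relevant classes (everywhere defined bounded operators for $\rho$, operators bounded on their domain for $\reg$) in both directions, since $S^{-1}$ is again upper triangular with nonzero diagonal; the domain identity $\ran(\tau_M(A)-\lambda)=\ran(A-\mu)$ you point out is what makes the $\reg$ comparison legitimate. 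The three exceptional configurations at $\infty$ also work out exactly as you predict: for $\mu=\infty$, $\gamma\neq 0$ one has $S=N_{\alpha/\gamma}M$, whose lower-left entry $\alpha-(\alpha/\gamma)\gamma$ vanishes; for $\mu=\infty$, $\gamma=0$ one has $S=M$, already upper triangular; and for $\gamma\mu+\delta=0$ (i.e.\ $\phi_M(\mu)=\infty$) one has $S=MP_\mu^{-1}$ with lower-left entry $\gamma\mu+\delta=0$. So your argument is complete and self-contained, using only \thref{moebtrafoeig}, \eqref{bspfmoeb} and the paper's conventions for the point $\infty$.
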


\section{Rational Functional Calculus}

As the results in this section are more or less folklore and their verification is
straight forward, we shall skip proofs.

Let $A$ be a linear relation on a Banach space $\mc K$.
It is well known that $\rho(A)$ (as well as $r(A)$) is an open subset of $\mathbb C\cup\{\infty\}$, and
for $\mu,\lambda \in \rho(A)\setminus \{\infty\}$ the so-called \emph{resolvent equality} 
(see for example \cite{dijksmadesnoo1974}) 
\[
    (A-\lambda)^{-1} - (A-\mu)^{-1} = (\lambda-\mu)(A-\lambda)^{-1} (A-\mu)^{-1}
\]
holds true. 

For the following, assume that $\rho(A)\neq \emptyset$. By 
$\bb C_{\rho(A)}(z)$ we denote the set of all rational functions with poles in 
$\rho(A) \ (\subseteq \bb C \cup \{\infty\})$. Recall that $\infty$ is a pole of the
rational function $r(z)=\frac{u(z)}{v(z)}$ if the polynomial $u(z)$ is of degree greater than
$v(z)$.

By partial fraction decomposition, any rational function $r(z)$ can be represented as
\begin{equation}\label{fracdec}
    r(z) = p(z) + \sum_{k=1}^m \sum_{j=1}^{n(k)} \frac{c_{kj}}{(z-\alpha_k)^j} \,,
\end{equation}
where $p(z)$ is a polynomial, $c_{kj} \in \bb C$ with $c_{kn(k)}\neq 0$, and where $\alpha_1,\dots,\alpha_m$
are the poles of $r(z)$ that are contained in $\bb C$. Clearly, $r(z) \in \bb C_{\rho(A)}(z)$
if and only if both $\alpha_1,\dots,\alpha_m\in \rho(A)$ and, in the case $\deg p > 0$, $\infty \in \rho(A)$.
Therefore, $r(A)$ as defined below is a well-defined bounded operator. 

\begin{definition}\thlab{ratfufunkdef}
    For $r\in \bb C_{\rho(A)}(z)$ given in the form \eqref{fracdec}, we set
    \[
	r(A):= p(A) + \sum_{k=1}^m \sum_{j=1}^{n(k)} c_{kj} (A - \alpha_k)^{-j} \ \in B(\mc K) \,.\vspace*{-5mm}
    \]
\end{definition}

Using the resolvent equality and in case $\lambda, \infty\in \rho(A)$ also 
$(A-\lambda)^{-1}A = I + \lambda (A-\lambda)^{-1}$, it is straight forward to verify

\begin{theorem}\thlab{ratcalcul}
    For a linear relation $A$ on Banach space $\mc K$ with a non-empty resolvent set,
    the mapping $r \mapsto r(A)$ constitutes an algebra homomorphism from $\bb C_{\rho(A)}(z)$ into $B(\mc K)$, the space
    of all bounded, everywhere defined linear operators on $\mc K$.
		
    If $\mc K$ is a Krein space and $.^+$ denotes the Krein space adjoint as defined in \eqref{kadjde}, 
    then we additionally have $r(A)^+=r^\#(A^+)$, where 
    $r^\#(z)=\overline{r(\bar z)}$.
\end{theorem}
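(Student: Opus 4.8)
The plan is to establish the three assertions---linearity, multiplicativity, and the adjoint formula---separately, with multiplicativity being the substantial point. Throughout I write $\Phi(r):=r(A)$ and abbreviate $R_\alpha:=(A-\alpha)^{-1}\in B(\mc K)$ for $\alpha\in\rho(A)\cap\bb C$. Linearity of $\Phi$ is immediate from the \emph{uniqueness} of the partial fraction decomposition \eqref{fracdec}: the decomposition of a sum $r_1+r_2$ is obtained by adding the decompositions of $r_1$ and $r_2$ term by term (merging coefficients belonging to the same pole), and \thref{ratfufunkdef} depends linearly on the data $(p,(c_{kj}))$. Hence $\Phi(\lambda r_1+\mu r_2)=\lambda\Phi(r_1)+\mu\Phi(r_2)$, and in particular $\Phi(1)=I$.

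For multiplicativity I would exploit that $\Phi$ is already linear, so it suffices to verify $\Phi(bc)=\Phi(b)\Phi(c)$ when $b,c$ range over the linear spanning set
\[
  \mc{B}:=\{z^n : n\geq 0\}\cup\{(z-\alpha)^{-j} : \alpha\in\rho(A)\cap\bb C,\ j\geq 1\}
\]
of $\bb C_{\rho(A)}(z)$, where the $z^n$ with $n\geq 1$ occur only if $\infty\in\rho(A)$, in which case $A\in B(\mc K)$. Indeed, expanding $r_1=\sum_i a_i b_i$ and $r_2=\sum_j a'_j b_j$ and using bilinearity of the product together with linearity of $\Phi$ reduces the general claim to the basis products. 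Two cases are trivial: $z^n\cdot z^m$ gives $A^nA^m=A^{n+m}=\Phi(z^{n+m})$, and $(z-\alpha)^{-i}\cdot(z-\alpha)^{-j}$ gives $R_\alpha^iR_\alpha^j=R_\alpha^{i+j}=\Phi((z-\alpha)^{-(i+j)})$.

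The two genuinely nontrivial cases are exactly the ones the hint is tailored for. For distinct poles the resolvent equality yields $R_\alpha R_\beta=\frac{1}{\alpha-\beta}(R_\alpha-R_\beta)$, which is the operator image of the scalar identity $\frac{1}{(z-\alpha)(z-\beta)}=\frac{1}{\alpha-\beta}\bigl(\frac{1}{z-\alpha}-\frac{1}{z-\beta}\bigr)$; and the relation $AR_\alpha=I+\alpha R_\alpha$ (valid when $\infty\in\rho(A)$) is the image of $\frac{z}{z-\alpha}=1+\frac{\alpha}{z-\alpha}$. The remaining mixed products $z^n(z-\alpha)^{-j}$ and $(z-\alpha)^{-i}(z-\beta)^{-j}$ with $\alpha\neq\beta$ reduce to these two by induction on $n$, $i$, $j$; for the higher resolvent powers it is convenient to use that $\alpha\mapsto R_\alpha$ is holomorphic on $\rho(A)\cap\bb C$ with $\frac{d^{j-1}}{d\alpha^{j-1}}R_\alpha=(j-1)!\,R_\alpha^j$, so that all higher identities arise by differentiating the resolvent equality in the parameters. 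The main obstacle is purely this bookkeeping: one must check that the partial fraction decomposition of each scalar basis product matches, term by term, the operator identity produced by iterating the two displayed relations. Since every manipulation on the operator side has an exact scalar counterpart, the two sides agree.

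For the adjoint formula I would invoke the standard properties of the Krein space adjoint: $.^+$ is conjugate linear and anti-multiplicative on $B(\mc K)$, it is compatible with inversion of relations, and $(A-\lambda)^+=A^+-\bar\lambda$; consequently $\lambda\in\rho(A)$ iff $\bar\lambda\in\rho(A^+)$, so $\rho(A^+)=\overline{\rho(A)}$ and $r^\#\in\bb C_{\rho(A^+)}(z)$, making $r^\#(A^+)$ well defined. Writing $r$ as in \eqref{fracdec}, one has $r^\#(z)=p^\#(z)+\sum_{k,j}\frac{\bar c_{kj}}{(z-\bar\alpha_k)^j}$ with $p^\#(z)=\overline{p(\bar z)}$. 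Applying $.^+$ to $\Phi(r)$ and using $(A^n)^+=(A^+)^n$ and $\bigl((A-\alpha_k)^{-j}\bigr)^+=\bigl((A-\alpha_k)^+\bigr)^{-j}=(A^+-\bar\alpha_k)^{-j}$ term by term gives
\[
  r(A)^+=p^\#(A^+)+\sum_{k,j}\bar c_{kj}\,(A^+-\bar\alpha_k)^{-j}=r^\#(A^+),
\]
which is the asserted identity.
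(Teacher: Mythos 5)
Your proposal is correct and follows exactly the route the paper intends: the paper skips the proof as folklore, remarking only that the statement is ``straight forward to verify'' using the resolvent equality and, in case $\lambda,\infty\in\rho(A)$, the identity $(A-\lambda)^{-1}A=I+\lambda(A-\lambda)^{-1}$ --- precisely the two identities on which your case analysis for multiplicativity rests. Your reduction to basis products via linearity of the partial fraction decomposition, the induction/differentiation bookkeeping, and the termwise adjoint computation are the standard fleshing-out of that hint, so your write-up simply supplies the details the paper omits.
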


Our functional calculus is compatible with M\"obius type transformations.
In fact, if $M\in \bb C^{2\times 2}$ is regular and if the pole of $\phi_M$ belongs to $\rho(A)$, 
then it easily follows from \eqref{bspfmoeb}, that $\tau_M(A)=\phi_M(A)$.
From this, we can derive the following result on compositions with M\"obius transformations.

\begin{lemma}\thlab{compromise}
    Let $A$ be a linear relation on the Banach space $\mc K$ with a non-empty resolvent set, and
    let $M\in \bb C^{2\times 2}$ be regular. Then $r\mapsto r\circ \phi_M$ constitutes
    an algebra isomorphism from $\bb C_{\rho(\tau_M(A))}(z)$ onto $\bb C_{\rho(A)}(z)$ such that
    \[
	(r\circ \phi_M) (A) = r(\tau_M(A)) \ \ \text{ for all } \ \ r \in \bb C_{\rho(\tau_M(A))}(z) \,.
    \]
\end{lemma}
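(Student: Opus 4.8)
The plan is to treat the two claims separately. For the isomorphism, I would note that right-composition with the fixed map $\phi_M$ visibly preserves sums, products and constants, so the only substantive point is that it sends $\bb C_{\rho(\tau_M(A))}(z)$ into $\bb C_{\rho(A)}(z)$. Since $M$ is regular, $\phi_M$ is a bijection of $\bb C\cup\{\infty\}$ with inverse $\phi_{M^{-1}}$, so the poles of $r\circ\phi_M$ are exactly the $\phi_{M^{-1}}$-images of the poles of $r$; if the latter lie in $\rho(\tau_M(A))=\phi_M(\rho(A))$ (by \thref{spectrtrans}), the former lie in $\rho(A)$. Applying \thref{spectrtrans} to $\tau_M(A)$ and $M^{-1}$ together with $(\tau_M)^{-1}=\tau_{M^{-1}}$ from \thref{moebtrafoeig} shows that $s\mapsto s\circ\phi_{M^{-1}}$ maps $\bb C_{\rho(A)}(z)$ back into $\bb C_{\rho(\tau_M(A))}(z)$; as $\phi_M\circ\phi_{M^{-1}}=\mathrm{id}$, the two maps are mutually inverse, giving the algebra isomorphism.

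For the intertwining identity I would observe that
\[
\Phi_1\colon r\mapsto(r\circ\phi_M)(A)\qquad\text{and}\qquad\Phi_2\colon r\mapsto r(\tau_M(A))
\]
are both algebra homomorphisms from $\bb C_{\rho(\tau_M(A))}(z)$ into $B(\mc K)$: $\Phi_1$ is the isomorphism just obtained followed by the rational calculus for $A$, while $\Phi_2$ is the rational calculus for $\tau_M(A)$, whose resolvent set is non-empty by \thref{spectrtrans}; both are homomorphisms by \thref{ratcalcul}. Hence it suffices to verify $\Phi_1=\Phi_2$ on a generating set. By the partial fraction decomposition \eqref{fracdec}, the algebra $\bb C_{\rho(\tau_M(A))}(z)$ is generated by the constants, the resolvent functions $r_\mu(z)=(z-\mu)^{-1}$ with $\mu\in\rho(\tau_M(A))\cap\bb C$, and, when $\infty\in\rho(\tau_M(A))$, the function $z\mapsto z$; on constants both maps return $c\cdot I$.

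The core is the resolvent generators. Combining the shift and inversion rules of \eqref{bspfmoeb} with \thref{moebtrafoeig}, I would repackage the resolvent as a single M\"obius transformation: with $N_\mu=\smm[0 & 1 \\ 1 & -\mu]$,
\[
r_\mu(\tau_M(A))=(\tau_M(A)-\mu)^{-1}=\tau_{N_\mu}(\tau_M(A))=\tau_{N_\mu M}(A).
\]
The matrix $N_\mu M$ is regular, and a direct computation of the product gives $\phi_{N_\mu M}(z)=(\phi_M(z)-\mu)^{-1}=(r_\mu\circ\phi_M)(z)$; its unique pole is $\phi_{M^{-1}}(\mu)$, which lies in $\rho(A)$ because $\mu\in\rho(\tau_M(A))=\phi_M(\rho(A))$ by \thref{spectrtrans}. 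Thus the observation $\tau_N(A)=\phi_N(A)$ recorded just before the lemma (valid when the pole of $\phi_N$ lies in $\rho(A)$) applies and yields $r_\mu(\tau_M(A))=\tau_{N_\mu M}(A)=\phi_{N_\mu M}(A)=(r_\mu\circ\phi_M)(A)$, i.e.\ $\Phi_1(r_\mu)=\Phi_2(r_\mu)$. The generator $z\mapsto z$, relevant only when $\infty\in\rho(\tau_M(A))$, is handled the same way: there $r(\tau_M(A))=\tau_M(A)$, the pole of $\phi_M$ is $\phi_{M^{-1}}(\infty)\in\rho(A)$, and the same observation gives $\tau_M(A)=\phi_M(A)=(r\circ\phi_M)(A)$.

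The main obstacle is exactly this last step: one has to express the shift-and-invert operation as a single transformation $\tau_{N_\mu M}$ and then confirm, via \thref{spectrtrans}, that the pole of the resulting M\"obius function genuinely falls into $\rho(A)$, so that the scalar identity $\tau_N(A)=\phi_N(A)$ may be legitimately invoked. Everything else --- the homomorphism properties and the reduction to generators --- is then routine bookkeeping.
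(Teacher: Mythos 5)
Your proof is correct and follows exactly the route the paper intends: the paper omits the proof as folklore, but its remark preceding the lemma (that $\tau_N(A)=\phi_N(A)$ when the pole of $\phi_N$ lies in $\rho(A)$) is precisely the key fact you invoke, combined with \thref{spectrtrans}, \thref{moebtrafoeig}, and reduction to resolvent generators via partial fractions. No gaps; the verification that $\phi_{N_\mu M}=r_\mu\circ\phi_M$ has its pole at $\phi_{M^{-1}}(\mu)\in\rho(A)$ is the essential point and you handle it correctly, including the case $\infty\in\rho(\tau_M(A))$.
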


Finally, a spectral mapping result holds true for our rational functional calculus. For this, recall
($\lambda\in \bb C$)
\[
    \ker (A-\lambda) = \{ x : (x;\lambda x) \in A\} \ \ \text{ and } \ \
    \ker (A-\infty):= \mul A = \{ y : (0;y) \in A\} \,.
\]

\begin{theorem}\thlab{spectransrat}
    For any $r\in \bb C_{\rho(A)}(z)$ we have $\sigma(r(A))=r(\sigma(A))$.	
    Moreover, 
    \[ 
	\ker (A-\lambda) \subseteq \ker (r(A) - r(\lambda)) \ \ \text{ and } \ \
	    \ran (r(A) - r(\lambda)) \subseteq \ran (A-\lambda)
    \]
    hold for all $\lambda \in \bb C\cup\{\infty\}$.
\end{theorem}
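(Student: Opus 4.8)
The plan is to reduce every assertion to the behaviour of a single degree-one factor and to exploit that the calculus is an algebra homomorphism (\thref{ratcalcul}). The central observation is this: if $m$ is a degree-one rational function of the form $m(z)=\frac{z-\zeta}{z-\pi}$, $m(z)=z-\zeta$, or $m(z)=\frac{1}{z-\pi}$ whose pole lies in $\rho(A)$, then the identity $\tau_M(A)=\phi_M(A)$ noted before \thref{compromise} gives $m(A)=\tau_M(A)$ with $\phi_M=m$, so \thref{spectrtrans} yields $\sigma(m(A))=m(\sigma(A))$. Since $m(A)\in B(\mc K)$, it is boundedly invertible if and only if $0\notin\sigma(m(A))=m(\sigma(A))$, i.e.\ if and only if the unique zero of $m$ (possibly $\infty$, in the third case) does not lie in $\sigma(A)$.

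For the spectral mapping equality, fix $\mu\in\bb C$ and put $s:=r-\mu\in\bb C_{\rho(A)}(z)$, so $r(A)-\mu=s(A)$. Counting zeros and poles on the Riemann sphere, $s$ factors as $s=c\prod_k m_k$ with $c\neq 0$ and each $m_k$ a degree-one factor of the above type; the crucial point is that a pole of $s$ at $\infty$ forces $\infty\in\rho(A)$ (because $s\in\bb C_{\rho(A)}(z)$), so every $m_k$ can indeed be chosen with its pole in $\rho(A)$. By \thref{ratcalcul}, $s(A)=c\prod_k m_k(A)$ is a product of commuting elements of $B(\mc K)$, and such a product is invertible iff every factor is. Together with the central observation, $r(A)-\mu$ is invertible iff no zero of $s$ lies in $\sigma(A)$, i.e.\ iff $\mu\notin r(\sigma(A))$; hence $\sigma(r(A))\cap\bb C=r(\sigma(A))\cap\bb C$. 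Finally $\infty\notin\sigma(r(A))$ since $r(A)\in B(\mc K)$, and $\infty\notin r(\sigma(A))$ since all poles of $r$ lie in $\rho(A)$; this gives $\sigma(r(A))=r(\sigma(A))$.

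For the two inclusions I would split off only the factor carrying the relevant zero. Let $\lambda\in\bb C$ be a non-pole of $r$ (poles lie in $\rho(A)$, where $\ker(A-\lambda)=\{0\}$ and $\ran(A-\lambda)=\mc K$ render both inclusions trivial). Fix $\pi\in\rho(A)\cap\bb C$ (non-empty since $\rho(A)$ is open) and write $r-r(\lambda)=m_1 g$ with $m_1=\frac{z-\lambda}{z-\pi}$; as $r-r(\lambda)$ has a zero at $\lambda$, the quotient $g=\frac{z-\pi}{z-\lambda}\,(r-r(\lambda))$ again lies in $\bb C_{\rho(A)}(z)$, and $r(A)-r(\lambda)=m_1(A)g(A)=g(A)m_1(A)$. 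Using $m_1(A)=I+(\pi-\lambda)(A-\pi)^{-1}$ and that $(x;\lambda x)\in A$ forces $(A-\pi)^{-1}x=\frac{1}{\lambda-\pi}x$, a one-line computation gives $m_1(A)x=0$, whence $(r(A)-r(\lambda))x=g(A)m_1(A)x=0$; this proves the kernel inclusion. For the range, note $\ran(r(A)-r(\lambda))\subseteq\ran(m_1(A))$, and for arbitrary $v$, putting $u:=(A-\pi)^{-1}v$ one checks directly that $(u;m_1(A)v)\in A-\lambda$, so $\ran(m_1(A))\subseteq\ran(A-\lambda)$.

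The case $\lambda=\infty$ is handled in the same way with $m_1=\frac{1}{z-\pi}$, so that $m_1(A)=(A-\pi)^{-1}$, while $\ker(A-\infty)=\mul A$ and $\ran(A-\infty)=\dom A$. For $y\in\mul A$ one has $(0;y)\in A$, hence $(A-\pi)^{-1}y=0$ and the kernel inclusion follows; and $\ran(m_1(A))=\ran((A-\pi)^{-1})=\dom A$ gives the range inclusion. I expect the main obstacle to be the bookkeeping at $\infty$: one must interpret zeros, kernels and ranges at infinity consistently (as $r(\infty)$, $\mul A$ and $\dom A$) and, in the factorizations, verify that each factor can be chosen with its pole in $\rho(A)$, which hinges precisely on the fact that a pole at $\infty$ forces $\infty\in\rho(A)$.
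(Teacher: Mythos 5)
Your proof is correct; note that the paper itself gives no argument here — the section opens by declaring its results folklore and explicitly skips all proofs — so there is no in-paper proof to compare against. Your route — factoring $r-\mu$ into degree-one factors whose poles lie in $\rho(A)$ (using that a pole at $\infty$ forces $\infty\in\rho(A)$), applying \thref{spectrtrans} to each factor via the identity $\tau_M(A)=\phi_M(A)$, invoking the homomorphism property of \thref{ratcalcul} for the commuting product, and splitting off the single factor $\frac{z-\lambda}{z-\pi}$ (resp.\ $\frac{1}{z-\pi}$ at $\lambda=\infty$) for the kernel and range inclusions — is precisely the straightforward verification the authors allude to, with the bookkeeping at $\infty$ handled consistently.
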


\section{Diagonal Transform of Linear Relations}
\label{movlinrel}

In this section we present a method on how to drag a linear relation from a vector space 
to another vector space. After few general definitions and results, we shall focus on
linear relations on Krein spaces. 

To be more precise, we start with an everywhere defined linear operator $T: \mc V \to \mc K$, where 
$\mc V$ and $\mc K$ are vector spaces over the same field $\mathbb R$ or $\mathbb C$. 
If $B$ is a linear relation on $\mc V$, i.e.\ a linear subspace of $\mc V \times \mc V$, then, clearly,
\[
    (T\times T)(B) = \{(Tu;Tv) : (u;v) \in B \} 
\]
is a linear relation on $\mc K$. Similarly, given a linear relation $A$ on $\mc K$, 
i.e.\ a linear subspace of $\mc K \times \mc K$, 
\[
    (T\times T)^{-1}(A) = \{(u;v) \in \mc V \times \mc V : (Tu;Tv) \in A \} 
\]
is a linear relation on $\mc V$. 
If $\mc K$ and $\mc V$ are normed spaces and if $T$ is bounded, then $(T\times T)^{-1}(A)$ is closed for any 
closed $A$.
Besides these trivial facts, it is easy to show that
\[
    (T\times T)(B) = TBT^{-1}, \quad (T\times T)^{-1}(A)=T^{-1}AT \,,
\]
where these products have to be interpreted as relational products. Moreover,
as $\tau_M$ commutes with $T\times T$ for any $M\in\bb C^{2\times 2}$ we have
\begin{equation}\label{zzue4}
\begin{aligned}
 \tau_M \big((T\times T)(B)\big) &=(T\times T)\big(\tau_M(B)\big), \\
\tau_M \big((T\times T)^{-1}(A)\big) &=(T\times T)^{-1}\big(\tau_M(A)\big) \,,
\end{aligned}
\end{equation}
where, for the second equality, $M$ has to be invertible.

\begin{remark}\thlab{comminter}
    Let  $T: \mc V \to \mc K$ be a linear mapping between vector spaces, and let $A$
    be a linear relation on $\mc K$ and $B$ be a linear relation on $\mc V$.
    We consider the condition $(T\times T)(B) \subseteq A$, which clearly is equivalent
    to $B \subseteq (T\times T)^{-1}(A)$.
    Multiplying $T$ from the left 
    to the inclusion $B \subseteq (T\times T)^{-1}(A)= T^{-1} \ A \ T$ yields $TB \subseteq TT^{-1} \ A \ T \subseteq AT$.
    Conversely, $TB \subseteq AT$ implies $B \subseteq T^{-1}TB \subseteq T^{-1} AT = (T\times T)^{-1}(A)$.

    Thus, the conditions $(T\times T)(B) \subseteq A$, $B \subseteq (T\times T)^{-1}(A)$ and
    the intertwining condition $TB \subseteq AT$ are all equivalent.
    If $A$ and $B$ are everywhere defined operators, these conditions are even equivalent to $TB = AT$.
\end{remark}

Using the previous remark we get the following result on the rational functional calculus. 

\begin{proposition}\thlab{ratcalculcom}
    Let $A$ be a linear relation on $\mc K$ with $\rho(A)\neq \emptyset$.
    If $R \in B(\mc K)$ satisfies $(R\times R)(A) \subseteq A$, then $R$ commutes with 
    $r(A)$ for all $r \in \bb C_{\rho(A)}(z)$.
\end{proposition}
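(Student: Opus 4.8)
The plan is to reduce the assertion to the commutation of $R$ with the elementary building blocks of $r(A)$ and then to push the intertwining hypothesis through M\"obius transforms. By \thref{ratfufunkdef}, if we write $r$ in its partial fraction form \eqref{fracdec}, then $r(A)$ is a finite linear combination of the identity, of powers $(A-\alpha_k)^{-j}$ of resolvents with $\alpha_k\in\rho(A)\cap\bb C$, and---only when $\deg p>0$, which forces $\infty\in\rho(A)$ and hence $A\in B(\mc K)$---of powers of $A$ itself. Since commutation with a fixed $R$ is stable under linear combinations and products, it suffices to prove that $R$ commutes with each resolvent $(A-\alpha)^{-1}$ for $\alpha\in\rho(A)\cap\bb C$, and, in the case $\infty\in\rho(A)$, also with $A$.

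The key mechanism is that the intertwining inclusion propagates through M\"obius type transformations. Since $\tau_M$ commutes with $R\times R$ for every $M\in\bb C^{2\times 2}$, applying $\tau_M$ to the hypothesis $(R\times R)(A)\subseteq A$ and using the first identity in \eqref{zzue4} yields
\[
    (R\times R)\big(\tau_M(A)\big)=\tau_M\big((R\times R)(A)\big)\subseteq \tau_M(A) \,.
\]
For a fixed $\alpha\in\rho(A)\cap\bb C$ one can choose an invertible $M$ with $\tau_M(A)=(A-\alpha)^{-1}$ (cf.\ \eqref{bspfmoeb}); because $\alpha\in\rho(A)$, this M\"obius transform is an everywhere defined bounded operator $S:=(A-\alpha)^{-1}\in B(\mc K)$. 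Thus we arrive at $(R\times R)(S)\subseteq S$ with $S$ a genuine operator.

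At this point the operator case of \thref{comminter} applies: for everywhere defined operators the inclusion $(R\times R)(S)\subseteq S$ is equivalent to the identity $RS=SR$, that is, $R$ commutes with $(A-\alpha)^{-1}$. In the case $\infty\in\rho(A)$ the relation $A$ is itself an everywhere defined bounded operator, so the hypothesis $(R\times R)(A)\subseteq A$ already reads $RA=AR$ by the same operator equivalence. Feeding these commutations back into the partial fraction representation then gives $R\,r(A)=r(A)\,R$ for every $r\in\bb C_{\rho(A)}(z)$. The only genuine subtlety---and the reason the M\"obius calculus is invoked rather than a direct manipulation---is that $A$ is merely a linear relation, so $(R\times R)(A)\subseteq A$ cannot be turned into an operator identity on the nose; the transform $\tau_M$ is precisely what converts $A$ into the honest operator $(A-\alpha)^{-1}$ while carrying the intertwining inclusion along intact, after which \thref{comminter} closes the argument.
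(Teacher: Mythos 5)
Your proof is correct and follows essentially the same route as the paper's: propagate the inclusion $(R\times R)(A)\subseteq A$ through $\tau_M$ via \eqref{zzue4}, apply the operator case of \thref{comminter} to the resulting bounded operators $\tau_M(A)$ to obtain commutation with the resolvents (and with $A$ itself when $\infty\in\rho(A)$), and conclude through the partial fraction definition of $r(A)$. Your explicit treatment of the polynomial part is a detail the paper leaves implicit, but the argument is identical in substance.
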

\begin{proof}
    By \eqref{zzue4} $(R\times R)(A) \subseteq A$ yields $(R\times R)(\tau_M(A)) \subseteq 
    \tau_M(A)$ for all $M\in\bb C^{2\times 2}$. Consequently, according to \thref{comminter} 
    we have $R\tau_M(A) = \tau_M(A) R$ for all $M\in\bb C^{2\times 2}$ such that $\tau_M(A)$ is a 
    bounded operator. Using \eqref{bspfmoeb}, in particular we have 
    \[ R(A-\lambda)^{-1} = (A-\lambda)^{-1} R \ \ \text{ for all } \ \ \lambda\in\rho(A)\,. \]
    By the definition of $r(A)$, $R$ then commutes with $r(A)$ for all $r \in \bb C_{\rho(A)}(z)$.
\end{proof}

Also based on \thref{comminter}, the subsequent result will prove to be useful. 

\begin{lemma}\thlab{2xanwe}
If $T,S: \mc K \to \mc K$ are linear, everywhere defined operators such that $TS = ST$, then 
\[
    (T \times T)^{-1}(S) = S \boxplus (\ker T \times\ker T) \,,
\]
where $\boxplus$ denotes the usual vector space sum of two subspaces of $\mc K \times \mc K$.
\end{lemma}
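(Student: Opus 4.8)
The plan is to prove set equality by showing both inclusions, working directly from the defining formula $(T\times T)^{-1}(S)=\{(u;v)\in\mc K\times\mc K : (Tu;Tv)\in S\}$. Since $S$ is an everywhere defined operator, the condition $(Tu;Tv)\in S$ simply reads $Tv=S(Tu)$, so the whole relation is $\{(u;v) : Tv = S(Tu)\}$. The commutation hypothesis $TS=ST$ will be the single algebraic fact that makes everything work, and I expect the only genuine subtlety to be verifying the nontrivial inclusion, where one must produce a decomposition of an arbitrary element of the left-hand side into a piece from $S$ and a piece from $\ker T\times\ker T$.

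For the inclusion $\supseteq$, I would check the two summands separately, using that $\boxplus$ is just the subspace sum. First, for a pair $(u;Su)\in S$ I compute $T(Su)=S(Tu)$ by the commutation relation, so $(Tu;T(Su))=(Tu;S(Tu))\in S$, giving $(u;Su)\in(T\times T)^{-1}(S)$; hence $S\subseteq(T\times T)^{-1}(S)$. Second, for $(a;b)\in\ker T\times\ker T$ we have $Ta=Tb=0$, so $(Ta;Tb)=(0;0)\in S$ and thus $(a;b)\in(T\times T)^{-1}(S)$. Since $(T\times T)^{-1}(S)$ is a linear subspace of $\mc K\times\mc K$, it contains the sum $S\boxplus(\ker T\times\ker T)$.

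For the reverse inclusion $\subseteq$, take $(u;v)$ with $Tv=S(Tu)$. The idea is to subtract off the obvious $S$-part $(u;Su)$ and show the remainder lands in $\ker T\times\ker T$. Writing $(u;v)=(u;Su)+(0;v-Su)$, the first summand lies in $S$; for the second I must check $0\in\ker T$ (trivial) and $v-Su\in\ker T$, i.e.\ $T(v-Su)=0$. This follows from $T(v-Su)=Tv-T(Su)=Tv-S(Tu)=0$, again invoking $TS=ST$ together with the membership condition $Tv=S(Tu)$. This is the step I would flag as the crux, since it is exactly where the hypothesis $TS=ST$ is essential: without commutation one could not identify $T(Su)$ with $S(Tu)$ and the decomposition would fail.

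Altogether the argument is short and the main obstacle is purely bookkeeping rather than conceptual: one must be careful that the decomposition $(u;v)=(u;Su)+(0;v-Su)$ is the right one (rather than, say, splitting the first coordinate), since $S$ being an operator forces the $S$-part to be determined by its domain-component $u$. Alternatively, this statement is an immediate special case of \thref{comminter} applied with $A=S$ and $B=(T\times T)^{-1}(S)$ to recover the intertwining description, but the direct coordinate computation above is the most transparent route and avoids any appeal to closedness or topology.
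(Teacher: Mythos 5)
Your proof is correct and follows essentially the same route as the paper's: both inclusions are handled identically, and your decomposition $(u;v)=(u;Su)+(0;v-Su)$ is exactly the paper's $(x;y)=(x;Sx)+(0;y-Sx)$. The only (immaterial) difference is that you verify $v-Su\in\ker T$ by direct computation from $TS=ST$, whereas the paper derives $(0;T(y-Sx))\in S$ relationally and then invokes $\mul S=\{0\}$.
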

\begin{proof}
$TS = ST$ can be expressed as $(T\times T)(S)\subseteq S$ or as $S\subseteq (T\times T)^{-1}(S)$.
Together with $(\ker T \times\ker T) \subseteq (T\times T)^{-1}(S)$, we obtain
$S \boxplus (\ker T \times\ker T) \subseteq (T \times T)^{-1}(S)$.

If $(x;y) \in (T \times T)^{-1}(S)$, then $(x;Sx) \in S \subseteq (T\times T)^{-1}(S)$. Hence, 
$(x;y)-(x;Sx)=(0;y-Sx)\in (T\times T)^{-1}(S)$, and in turn $(0;T(y-Sx))\in S$.
From $\mul S=\{0\}$ we conclude $y-Sx\in \ker T$. 
Thus, $(x;y) = (x;Sx) + (0;y-Sx) \in S \boxplus (\ker T \times\ker T)$.
\end{proof}

\begin{lemma}\thlab{fhwr5}
    Let  $T: \mc V \to \mc K$ be a linear mapping between vector spaces, and let 
    $A_1,A_2$ be a linear relation on $\mc K$ and $\mu\in\bb C\setminus \{0\}$. Then we have
    $(T\times T)^{-1}(\mu A) = \mu \ (T\times T)^{-1}(A)$ and
    \[
	  (T\times T)^{-1}(A_1 + A_2) \supseteq (T\times T)^{-1}(A_1) + (T\times T)^{-1}(A_2) \,, 
    \]
    \[
	  (T\times T)^{-1}(A_1 \ A_2) \supseteq (T\times T)^{-1}(A_1) \ (T\times T)^{-1}(A_2) \,.
    \]
\end{lemma}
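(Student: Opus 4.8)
The plan is to unwind the definition $(T\times T)^{-1}(A)=\{(u;v):(Tu;Tv)\in A\}$ in each case; all three assertions then reduce to short element chases, the only genuine point being the reverse inclusion in the first (equality) statement, where the hypothesis $\mu\neq 0$ is used.

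For the identity $(T\times T)^{-1}(\mu A)=\mu\,(T\times T)^{-1}(A)$ I would simply invoke \eqref{zzue4}. Taking $M=\smm[\mu & 0 \\ 0 & 1]$, which is invertible precisely because $\mu\neq 0$, one reads off from \eqref{bspfmoeb} that $\tau_M(A)=\mu A$ and likewise $\tau_M\big((T\times T)^{-1}(A)\big)=\mu\,(T\times T)^{-1}(A)$, so the second line of \eqref{zzue4} gives the claim at once. Alternatively, a direct argument works: $(u;v)\in(T\times T)^{-1}(\mu A)$ means $(Tu;Tv)\in\mu A$, i.e.\ $(Tu;\mu^{-1}Tv)\in A$, and by linearity of $T$ this equals $(Tu;T(\mu^{-1}v))\in A$, that is $(u;\mu^{-1}v)\in(T\times T)^{-1}(A)$, equivalently $(u;v)\in\mu\,(T\times T)^{-1}(A)$. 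The passage $\mu^{-1}Tv=T(\mu^{-1}v)$ is exactly where $\mu\neq 0$ enters; for $\mu=0$ only $\supseteq$ would survive, the gap being governed by $\ker T$.

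For the two inclusions I would chase elements through the relational sum and product. For the sum, if $(u;w)$ lies in $(T\times T)^{-1}(A_1)+(T\times T)^{-1}(A_2)$, write $w=w_1+w_2$ with $(u;w_i)\in(T\times T)^{-1}(A_i)$; then $(Tu;Tw_i)\in A_i$, so $(Tu;Tw_1+Tw_2)\in A_1+A_2$, and linearity gives $Tw_1+Tw_2=T(w_1+w_2)$, whence $(u;w)\in(T\times T)^{-1}(A_1+A_2)$. For the product, if $(u;w)\in(T\times T)^{-1}(A_1)\,(T\times T)^{-1}(A_2)$, choose an intermediate $s\in\mc V$ with $(u;s)\in(T\times T)^{-1}(A_2)$ and $(s;w)\in(T\times T)^{-1}(A_1)$; then $(Tu;Ts)\in A_2$ and $(Ts;Tw)\in A_1$, and $Ts\in\mc K$ serves as the intermediate element witnessing $(Tu;Tw)\in A_1A_2$, i.e.\ $(u;w)\in(T\times T)^{-1}(A_1A_2)$.

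There is no real obstacle here; the content is entirely formal, and the only things to get right are the conventions for the scalar multiple, the operator-type sum, and the relational product of linear relations, together with the repeated use of linearity of $T$. I would also record why the two inclusions are in general proper: in the sum an element $(Tu;y_1+y_2)\in A_1+A_2$ with $(Tu;y_i)\in A_i$ need not have its summands $y_i$ in $\ran T$, and in the product the intermediate element witnessing membership in $A_1A_2$ need not lie in $\ran T$, so in neither case can one always pull the relevant element back through $T$.
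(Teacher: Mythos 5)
Your proposal is correct and takes essentially the same route as the paper: the sum inclusion is verified by the identical element chase, and your element-wise argument for the product inclusion is just the unwound form of the paper's one-line computation $(T^{-1}A_1T)\,(T^{-1}A_2T)\subseteq T^{-1}A_1A_2T$, which rests on $TT^{-1}\subseteq I$ exactly where you use $Ts$ as the intermediate element. The paper dismisses the scalar identity as easy to check, so your two arguments for it (via \eqref{zzue4} with an invertible diagonal matrix, and directly) merely supply the omitted detail, correctly isolating where $\mu\neq 0$ enters.
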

\begin{proof}
    The first relation is easy to check.     
    For the second let $(x;y)$ belong to
    $(T\times T)^{-1}(A_1) + (T\times T)^{-1}(A_2)$. This means that $(Tx;Tu)\in A_1$
    and $(Tx;Tv)\in A_2$ for some $u,v\in\mc V$ with $u+v=y$. Hence, $Tu+Tv=Ty$, and
    in turn $(Tx;Ty) \in A_1+A_2$ which yields $(x;y)\in (T\times T)^{-1}(A_1 + A_2)$.
    The final relation is a consequence of $TT^{-1} \subseteq I$:
    \begin{align*}
	(T\times T)^{-1}(A_1) \ (T\times T)^{-1}(A_2) & = (T^{-1}A_1T) \ (T^{-1}A_2T) \\ & \subseteq T^{-1}A_1 A_2 T =
	(T\times T)^{-1}(A_1 \ A_2)
	\,.
    \end{align*}
\end{proof}

\noindent
Recall that the point spectrum of a linear relation $A$ is 
defined by 
\[
  \sigma_p(A)=\{\lambda\in\bb C\cup\{\infty\}: \ker(A-\lambda)\neq \{0\}\} \,.
\]

\begin{lemma}\thlab{cortranpo}
    For a linear relation $A$ on $\mc K$ we have
    \[
	\ker ((T\times T)^{-1}(A) - \lambda) = T^{-1} \ker(A - \lambda) \ \ \text{ for all } \ \ \lambda \in\bb C\cup\{\infty\} 
	\,.
    \]
    In particular, $\sigma_p((T\times T)^{-1}(A)) \subseteq \sigma_p(A)$
    if $T: \mc V \to \mc K$ is injective.
\end{lemma}
\begin{proof}
   First note that 
   \[
	y\in \mul (T\times T)^{-1}(A) \Leftrightarrow (0;Ty) \in A \Leftrightarrow y\in T^{-1}(\mul A) \,.
   \]
   Hence $\ker ((T\times T)^{-1}(A) - \lambda) = T^{-1} \ker(A - \lambda)$
   for $\lambda=\infty$. For the general case we set 
    $M=\bigl(\begin{smallmatrix} 0 & 1 \\ 1 & -\lambda \end{smallmatrix}\bigr)$ and get
\begin{align*}
	\ker ((T\times T)^{-1}(A) - \lambda) & = \mul \tau_M \big( (T\times T)^{-1}(A) \big) =
	\mul (T\times T)^{-1} \tau_M(A) \\ & = T^{-1} \mul \tau_M(A) = T^{-1} \ker(A - \lambda) \,.
\end{align*}
    For injective $T$, $T^{-1} \ker(A - \lambda)\neq \{0\}$ implies $\ker(A - \lambda)\neq \{0\}$.
    Therefore, $\sigma_p((T\times T)^{-1}(A)) \subseteq \sigma_p(A)$.
\end{proof}

In the following result we study the connection between $\dom (T\times T)^{-1}(A)$ and $\dom A$. 

\begin{lemma}\thlab{domrangevinv}
    For a linear relation $A$ on $\mc K$ we have
    \[
	\dom (T\times T)^{-1}(A) \subseteq T^{-1}(\dom A) \,.
    \]
    If $\ran (A - \mu) \subseteq \ran T$ for some $\mu\in \bb C$, then we even have
    \[
	\dom (T\times T)^{-1}(A) = T^{-1}(\dom A) \,. 
    \]
\end{lemma}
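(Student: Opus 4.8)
The plan is to prove the inclusion first and then the equality under the extra hypothesis, in both cases by simply unravelling the definition of the domain of a linear relation. Recall that $\dom (T\times T)^{-1}(A)$ consists of those $u\in\mc V$ for which there exists $v\in\mc V$ with $(u;v)\in (T\times T)^{-1}(A)$, i.e.\ with $(Tu;Tv)\in A$. If such a $v$ exists, then $Tu\in\dom A$, so that $u\in T^{-1}(\dom A)$; this already gives the asserted inclusion $\dom (T\times T)^{-1}(A)\subseteq T^{-1}(\dom A)$ without any assumption on $A$ or $T$.

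For the converse inclusion under the hypothesis $\ran(A-\mu)\subseteq\ran T$, I would start from an arbitrary $u\in T^{-1}(\dom A)$, so that $Tu\in\dom A$ and hence $(Tu;w)\in A$ for some $w\in\mc K$. The point where the hypothesis enters is the observation that, writing $A-\mu=\tau_{\smm[1 & -\mu \\ 0 & 1]}(A)$ as in \eqref{bspfmoeb}, the element $w-\mu\,Tu$ lies in $\ran(A-\mu)$. By assumption $\ran(A-\mu)\subseteq\ran T$, so there is some $v\in\mc V$ with $Tv=w-\mu\,Tu$, equivalently $w=T(v+\mu u)$.

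Setting $v':=v+\mu u$, I then have $(Tu;Tv')=(Tu;w)\in A$, which by definition means $(u;v')\in (T\times T)^{-1}(A)$ and therefore $u\in\dom (T\times T)^{-1}(A)$. Combined with the first part this yields the claimed equality.

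The only genuinely non-trivial point is the middle step: the naive element $w$ produced by $Tu\in\dom A$ need not itself lie in $\ran T$, so one cannot directly find a preimage $v$ with $(Tu;Tv)\in A$. The role of the hypothesis is precisely to correct for this by passing from $A$ to $A-\mu$, where $w-\mu\,Tu$ is guaranteed to be in $\ran T$; recovering a preimage for $w$ itself then only costs the harmless shift by $\mu u$, which does not affect membership in the domain. I expect this shifting argument, rather than any computation, to be the crux of the proof.
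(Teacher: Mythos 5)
Your proposal is correct and follows essentially the same route as the paper's proof: the inclusion is the same definition-unravelling, and for the equality both arguments exploit the hypothesis by passing to $A-\mu$, extracting a preimage under $T$, and shifting back by $\mu$. The only cosmetic difference is that the paper starts directly from a pair $(Tx;v)\in A-\mu$ (using $\dom A=\dom(A-\mu)$) and then produces $(x;y+\mu x)\in (T\times T)^{-1}(A)$, whereas you start from a pair in $A$ and shift twice; the substance is identical.
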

\begin{proof}
    It is straight forward to show $\dom (T\times T)^{-1}(A) \subseteq T^{-1}(\dom A)$. 
    
    If $\ran (A-\mu) \subseteq \ran T$ and if $Tx \in \dom A=\dom (A-\mu)$, 
    then $(Tx;v) \in A-\mu$ for some $v\in \ran (A-\mu) \subseteq \ran T$. 
    Hence, $v=Ty$ for some $y\in \mc V$, and in turn,
    $(x;y+\mu x) \in (T\times T)^{-1}(A)$. Thus, $x \in \dom (T\times T)^{-1}(A)$.
%
%
%
\end{proof}

\section{Diagonal Transform on Krein Spaces}
\label{movlinrelrein}

In this section we consider two Krein spaces $(\mc V,[.,.])$ and $(\mc K,[.,.])$ which are linked by
a bounded linear mapping $T: \mc V \to \mc K$.  
Mostly, $(\mc V,[.,.])$ will be a Hilbert space.

In the following, $T^+: \mc K \to \mc V$ denotes the Krein space adjoint operator of $T$.
For a linear relation $A$ on $\mc K$ the adjoint linear relation is defined as
\begin{equation}\label{kadjde}
    A^+ := \{(x;y) \in \mc K\times\mc K : [x,v]=[y,u] \ \text{ for all } \, (u;v) \in A\} \,.
\end{equation}
Obviously, this definition depends on the chosen inner products. 
Note that $A^+=\tau_{M}(A^{[\bot]})=\tau_{M}(A)^{[\bot]}$, where 
$M=\bigl(\begin{smallmatrix} 0 & -1 \\ 1 & 0 \end{smallmatrix}\bigr)$ and
the orthogonal complement is taken in the Krein space $(\mc K\times\mc K, [.,.]_{\mc K\times\mc K})$, where
$[(x;y),(u;v)]_{\mc K\times\mc K} = [x,u] + [y,v]$.
Analogously, the adjoints of relations on $\mc V$ are defined.

\begin{lemma}\thlab{adjungeig}
    For linear relation $A$ on $\mc K$, we have
    \[
	\big((T^+ \times T^+)(A)\big)^+ = (T\times T)^{-1}(A^+) \,.
    \]
    In particular, $\left((T\times T)^{-1}(A^+)\right)^+$ is the closure of $(T^+ \times T^+)(A)$.
\end{lemma}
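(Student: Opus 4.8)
The plan is to establish the displayed identity as a plain equality of subspaces of $\mc V\times\mc V$, obtained by expanding each side with the two defining formulas — that of $(T^+\times T^+)(\,\cdot\,)$ and that of the adjoint in \eqref{kadjde} — and then translating between the inner products of $\mc V$ and $\mc K$ through the defining relation $[Tu,k]=[u,T^+k]$ of the Krein space adjoint. The ``in particular'' statement will then follow formally by applying $.^+$ once more and invoking the identity $B^{++}=\overline{B}$.

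First I would write out the left-hand side. Since $(T^+\times T^+)(A)=\{(T^+a;T^+b):(a;b)\in A\}$, the definition \eqref{kadjde} applied in $\mc V$ gives
\[
\big((T^+\times T^+)(A)\big)^+=\{(x;y)\in\mc V\times\mc V:[x,T^+b]=[y,T^+a]\text{ for all }(a;b)\in A\}\,.
\]
Applying $[x,T^+b]=[Tx,b]$ and $[y,T^+a]=[Ty,a]$, the membership condition turns into $[Tx,b]=[Ty,a]$ for all $(a;b)\in A$. On the other hand, $(x;y)\in(T\times T)^{-1}(A^+)$ means exactly $(Tx;Ty)\in A^+$, which by \eqref{kadjde}, now in $\mc K$, reads $[Tx,v]=[Ty,u]$ for all $(u;v)\in A$. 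After renaming the dummy pair $(u;v)$ as $(a;b)$ this is the very same condition, so the two relations coincide.

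For the second assertion I would apply $.^+$ to both sides of the identity just proved, obtaining $\big((T\times T)^{-1}(A^+)\big)^+=\big((T^+\times T^+)(A)\big)^{++}$, and then identify the double adjoint of $B:=(T^+\times T^+)(A)$ with its closure. Here I would lean on the analogue for $\mc V$ of the description $B^+=\tau_M(B)^{[\bot]}$ with $M=\smm[0 & -1 \\ 1 & 0]$ recorded before the lemma: a direct check shows $[\tau_M(x;y),\tau_M(u;v)]_{\mc V\times\mc V}=[(x;y),(u;v)]_{\mc V\times\mc V}$, so $\tau_M$ is an isometric bijection of $\mc V\times\mc V$ and commutes with taking $[\bot]$. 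Using \thref{moebtrafoeig} together with $M^2=-I$ one then gets $B^{++}=\tau_M^2\big((B^{[\bot]})^{[\bot]}\big)=(B^{[\bot]})^{[\bot]}=\overline{B}$, the last step being the double-orthocomplement identity in the non-degenerate space $\mc V\times\mc V$. I expect the set-theoretic equality to be pure bookkeeping of quantifiers; the only point demanding genuine care is this final identity $B^{++}=\overline{B}$, that is, checking that $\tau_M$ is a Krein-space unitary and that double orthogonal complements return the strong closure.
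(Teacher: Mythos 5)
Your proof is correct, but it takes a genuinely more direct route than the paper. The paper's proof first establishes the orthocomplement identity $\left((T^+ \times T^+)(A)\right)^{[\bot]} = (T\times T)^{-1}(A^{[\bot]})$ as a special case of the general fact that $R^+(L)^{[\bot]}=R^{-1}(L^{[\bot]})$ for a bounded linear map $R$ between Krein spaces, and then converts orthocomplements into adjoints via $A^+=\tau_M(A^{[\bot]})=\tau_M(A)^{[\bot]}$ with $M=\smm[0 & -1 \\ 1 & 0]$, using that $\tau_M$ commutes with $(T\times T)^{-1}$ by \eqref{zzue4}. You instead expand both sides directly from \eqref{kadjde} and translate between the two spaces with $[Tx,b]=[x,T^+b]$, so the main identity reduces to quantifier bookkeeping; this is shorter and self-contained, while the paper's packaging buys a reusable orthocomplement lemma and stays inside the $\tau_M$ formalism it uses throughout. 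For the ``in particular'' statement both arguments take adjoints once more, and here you are actually more careful than the paper, which silently invokes that the double adjoint is the closure, whereas you derive $B^{++}=(B^{[\bot]})^{[\bot]}=\overline{B}$ from the unitarity of $\tau_M$ on $\mc V\times\mc V$ and $M^2=-I$ (with $\tau_{-I}$ fixing every linear subspace). One small caveat on wording: the identity $(B^{[\bot]})^{[\bot]}=\overline{B}$ is not a consequence of non-degeneracy alone -- it can fail in incomplete non-degenerate inner product spaces -- but it does hold here because $\mc V\times\mc V$ with the sum inner product is a Krein space, so a fundamental symmetry reduces the claim to the Hilbert space double-orthocomplement theorem.
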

\begin{proof}
    For a continuous linear $R: \mc K_1 \to \mc K_2$ between Krein spaces and $L\subseteq \mc K_2$ it is easy to check that 
    $R^+(L)^{[\bot]}$ coincides with the inverse image $R^{-1}(L^{[\bot]})$ of $L^{[\bot]}$ under $R$. 
    Applying this to $T\times T$ and $A$ yields
    \[
	\left((T^+ \times T^+)(A)\right)^{[\bot]} = (T\times T)^{-1}(A^{[\bot]}) \,,
    \]
    when we equip $\mc V \times \mc V$ and $\mc K \times \mc K$ with the respective sum scalar product.
    From $A^+ = \tau_{\bigl(\begin{smallmatrix} 0 & -1 \\ 1 & 0 \end{smallmatrix}\bigr)}(A^{[\bot]})=
    \tau_{\bigl(\begin{smallmatrix} 0 & -1 \\ 1 & 0 \end{smallmatrix}\bigr)}(A)^{[\bot]}$ 
    and \eqref{zzue4} we obtain
\begin{align*}
	\left((T^+ \times T^+)(A)\right)^+ & = \tau_{\bigl(\begin{smallmatrix} 0 & -1 \\ 
	      1 & 0 \end{smallmatrix}\bigr)}\left(\left((T^+ \times T^+)(A)\right)^{[\bot]}\right) \\ & = 
	\tau_{\bigl(\begin{smallmatrix} 0 & -1 \\ 
	1 & 0 \end{smallmatrix}\bigr)}\left((T\times T)^{-1}(A^{[\bot]})\right) = (T\times T)^{-1}(A^+) \,.
\end{align*}
    Taking adjoints shows that $(T\times T)^{-1}(A^+)^+$ is the closure of $(T^+ \times T^+)(A)$.
\end{proof}

\begin{proposition}\thlab{kommmitransf}
    Let $T: \mc V \to \mc K$ be a bounded and linear mapping between Krein spaces $\mc V$ and $\mc K$.
    If $A$ is a closed linear relation on $\mc K$, which satisfies 
    \begin{equation} \label{invarvss}
	(TT^+ \times TT^+)(A^+) \subseteq A  \,, 
    \end{equation}
    then the closure $(T\times T)^{-1}(A)^+$ of $(T^+ \times T^+)(A^+)$ is a 
    symmetric linear relation on $\mc V$.

    In the special case that $T$ is injective, that $(\mc V,[.,.])$ is a Hilbert space and that $\bb C\setminus \sigma_p(A)$
    contains points from $\bb C^+$ and from $\bb C^-$,
    the relation $(T\times T)^{-1}(A)$ is self-adjoint.
\end{proposition}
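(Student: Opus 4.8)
The plan is to write $B:=(T\times T)^{-1}(A)$ and to exploit that $B$ is closed, which holds because $A$ is closed and $T$ is bounded (Section~\ref{movlinrel}). Applying \thref{adjungeig} with $A^+$ in place of $A$ and using $A^{++}=A$ for the closed relation $A$, the adjoint $B^+$ is precisely the closure of $(T^+\times T^+)(A^+)$, which is the identification already recorded in the statement. Everything then hinges on the single inclusion
\[
	(T^+\times T^+)(A^+)\subseteq B \,.
\]
I would check this by hand: for $(x;y)\in A^+$ one has $(T^+x;T^+y)\in B$ if and only if $(TT^+x;TT^+y)\in A$; but $(TT^+x;TT^+y)=(TT^+\times TT^+)(x;y)$ with $(x;y)\in A^+$, so this is exactly the hypothesis \eqref{invarvss}.

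Granting the inclusion, the first assertion is immediate. Since $B$ is closed, passing to closures gives $B^+=\overline{(T^+\times T^+)(A^+)}\subseteq B$, and because $B$ is closed we also have $(B^+)^+=B^{++}=B$. Hence $B^+\subseteq(B^+)^+$, i.e.\ $B^+$ is symmetric.

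For the self-adjointness in the special case I would set $S:=B^+$. By the above $S$ is symmetric, it is closed (adjoints always are), and its adjoint is $S^+=B$. The goal $S=S^+$ is equivalent to $B^+=B$. Here I would invoke the defect-space theory of closed symmetric relations on the Hilbert space $\mc V$: the numbers $\dim\ker(S^+-\mu)=\dim\ker(B-\mu)$ are constant on each of the half-planes $\bb C^+$ and $\bb C^-$, and $S$ is self-adjoint once they vanish at a single point of each half-plane. Now \thref{cortranpo} together with the injectivity of $T$ gives $\sigma_p(B)\subseteq\sigma_p(A)$, so the points $\lambda_+\in\bb C^+\setminus\sigma_p(A)$ and $\lambda_-\in\bb C^-\setminus\sigma_p(A)$ furnished by the hypothesis satisfy $\ker(B-\lambda_+)=\ker(B-\lambda_-)=\{0\}$. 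By constancy the defect spaces vanish on all of $\bb C^+\cup\bb C^-$, so $S=B^+$ is self-adjoint; hence $B^+=S=S^+=B$, i.e.\ $(T\times T)^{-1}(A)$ is self-adjoint.

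The routine ingredients are the membership computation establishing the inclusion and the bookkeeping with closures and adjoints. The step I expect to carry the real weight is the self-adjointness argument: it relies on the constancy of the defect numbers on each open half-plane and on the half-plane criterion for self-adjointness, which I would quote from the theory of symmetric linear relations (in the spirit of \cite{dijksmadesnoo1987b}). The role of \thref{cortranpo} and of the injectivity of $T$ is exactly to transport the spectral hypothesis on $A$ into the vanishing of the deficiency spaces of $B^+$.
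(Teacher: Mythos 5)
Your proposal is correct and follows essentially the same route as the paper: the inclusion $(T^+\times T^+)(A^+)\subseteq (T\times T)^{-1}(A)$ extracted from \eqref{invarvss}, the identification of $(T\times T)^{-1}(A)^+$ as the closure of $(T^+\times T^+)(A^+)$ via \thref{adjungeig} and closedness of $(T\times T)^{-1}(A)$, and then the defect-index argument combined with \thref{cortranpo} and the injectivity of $T$ to transport the point-spectrum hypothesis on $A$. The only cosmetic differences are that you verify the key inclusion element-wise rather than through the composition identity $(T\times T)(T^+\times T^+)=(TT^+\times TT^+)$, and that the paper states the defect-index step contrapositively; the substance is identical.
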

\begin{proof}
    The assumption
    $(T\times T) \ (T^+ \times T^+)(A^+) = (TT^+ \times TT^+)(A^+) \subseteq A$ implies 
    $(T^+ \times T^+)(A^+) \subseteq (T\times T)^{-1}(A)$. Thus, also the closure
    $(T\times T)^{-1}(A)^+$ of 
    $(T^+ \times T^+)(A^+)$ -- see \thref{adjungeig} -- 
    is contained in the closed $(T\times T)^{-1}(A)$.
    Hence $(T\times T)^{-1}(A)^+$ is symmetric.
    
    If $\mc V$ is a Hilbert space, then $(T\times T)^{-1}(A)^+$ not being a self-adjoint relation on $\mc A$ 
    implies that its defect indices are not both equal to zero. This means 
    $\ker((T\times T)^{-1}(A) - \lambda) \neq \{0\}$
    for all $\lambda\in\bb C^+$ or for all $\lambda\in\bb C^-$. 
    Hence the point spectrum of its adjoint
    $(T\times T)^{-1}(A)$ contains all points from the upper halfplane or all points from the lower halfplane.
    
    Due to \thref{cortranpo} we have $\sigma_p((T\times T)^{-1}(A)) \subseteq \sigma_p(A)$.
    Hence, $(T\times T)^{-1}(A)^+$ must be self-adjoint if $\bb C\setminus \sigma_p(S)$
    contains points from $\bb C^+$ and from $\bb C^-$.
\end{proof}

\begin{remark}\thlab{pezcxhpre}
    Note that the condition $(TT^+ \times TT^+)(A^+) \subseteq A$ clearly holds true, if A is closed,
    $A^+$ is symmetric and $(TT^+ \times TT^+)(A^+) \subseteq A^+$.
\end{remark}

\begin{remark}\thlab{pezcxh}
    With the notation and assumptions from \thref{kommmitransf}, we additionally suppose that 
    $T$ is injective and that $\mc V$ is a Hilbert space.
    By \thref{kommmitransf}, $(T\times T)^{-1}(A)^+$ -- subsequently we shall write $(T\times T)^{-1}(A)^*$ 
    since the adjoint is taken in a Hilbert space -- is symmetric. We can formulate a somewhat more general statement.
    In fact, employing \thref{cortranpo} we get
    \[
	\dim \ker((T\times T)^{-1}(A) - \lambda) \leq \dim \ker(A - \lambda) \ \ \text{ for all } 
		\ \ \lambda\in \bb C\cup\{\infty\} \,.
    \]
    Hence the defect indices $n_\pm$ of the symmetry $(T\times T)^{-1}(A)^*$ can be estimated
    from above by $\min \{ \dim \ker(A - \lambda) : \lambda \in \bb C^\pm \}$.
    $\bb C\setminus \sigma_p(A)$ containing points from $\bb C^+$ and from $\bb C^-$ then yields
    $n_\pm = 0$, and we again obtain the result from above.
\end{remark}

The following assertion is a consequence of Loewner's Theorem. In order to be more self-contained we 
give a direct verification using spectral calculus for self-adjoint operators on Hilbert spaces. 

\begin{lemma}\thlab{wuzuab}
    Let $(\mc H,(.,.))$ be a Hilbert space and let $A,C\in B(\mc H)$ such that $C$ and $AC$ are self-adjoint and
    such that $C\geq 0$. Then we have $|(AC x,x)| \leq \|A\| \ (Cx,x)$ for all $x \in\mc H$.
\end{lemma}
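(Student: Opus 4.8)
The plan is to recognize this as Reid's inequality and to prove the equivalent operator estimate $-\|A\|\,C \le AC \le \|A\|\,C$. Since $AC$ is self-adjoint, $(ACx,x)$ is real for every $x$, so the claim is the same as $-\|A\|(Cx,x)\le (ACx,x)\le \|A\|(Cx,x)$. The natural device is the positive semidefinite Hermitian form $\langle u,v\rangle_C := (Cu,v)$; because $C\ge 0$, spectral calculus furnishes the square root $C^{1/2}$ with $\langle u,u\rangle_C = \|C^{1/2}u\|^2\ge 0$, so the Cauchy--Schwarz inequality $|\langle u,v\rangle_C|\le \langle u,u\rangle_C^{1/2}\langle v,v\rangle_C^{1/2}$ is available.

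First I would record the algebraic consequence of self-adjointness: $(AC)^{\ast}=CA^{\ast}$, so $AC$ being self-adjoint means $AC=CA^{\ast}$. From this I extract the crucial fact that $A^{\ast}$ is symmetric for $\langle\cdot,\cdot\rangle_C$: indeed $\langle A^{\ast}u,v\rangle_C=(CA^{\ast}u,v)=(ACu,v)$ and $\langle u,A^{\ast}v\rangle_C=(Cu,A^{\ast}v)=(ACu,v)$ agree, and in particular $\langle A^{\ast}x,x\rangle_C=(ACx,x)$. Writing $B:=A^{\ast}$ (so $\|B\|=\|A\|$), the target becomes $|\langle Bx,x\rangle_C|\le \|B\|\,\langle x,x\rangle_C$.

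Now comes the heart of the argument, a bootstrapping iteration. Set $c_n:=\langle B^{n}x,x\rangle_C$. Symmetry of $B$ gives $c_{2m}=\langle B^{m}x,B^{m}x\rangle_C\ge 0$, and Cauchy--Schwarz yields $|c_1|\le c_2^{1/2}c_0^{1/2}$ together with $c_{2^k}\le c_{2^{k+1}}^{1/2}c_0^{1/2}$ for every $k\ge 1$. Chaining these produces $|c_1|\le c_{2^n}^{1/2^n}\,c_0^{\,1-1/2^n}$ for all $n$. Since $c_{2^n}=(CB^{2^n}x,x)\le \|C\|\,\|B\|^{2^n}\|x\|^{2}$, the factor $c_{2^n}^{1/2^n}$ is at most $\|C\|^{1/2^n}\|B\|\,\|x\|^{2/2^n}$, which tends to $\|B\|$, while $c_0^{\,1-1/2^n}\to c_0$; letting $n\to\infty$ gives $|c_1|\le \|B\|\,c_0$, that is $|(ACx,x)|\le \|A\|(Cx,x)$ (the case $c_0=0$ is covered automatically, the right-hand side then vanishing for every $n$).

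The step I expect to be the main obstacle is exactly this iteration: a single application of Cauchy--Schwarz only gives the ``half-power'' bound $|(ACx,x)|\le (Cx,x)^{1/2}(CA^{\ast}x,A^{\ast}x)^{1/2}$, which is too weak, and the whole point is that repeatedly feeding the estimate back into itself recovers the missing half power in the limit. A more overtly spectral alternative would note that when $C$ is boundedly invertible the operator $S:=C^{-1/2}AC^{1/2}$ is self-adjoint (using $A^{\ast}=C^{-1}AC$) and similar to $A$, so the substitution $y=C^{1/2}x$ reduces the claim to $|(Sy,y)|\le \|S\|\,\|y\|^{2}$; the difficulty there is that a general $C\ge 0$ need not be invertible, and making that route rigorous would demand an approximation or reduction to $\overline{\ran C}$ that the Reid-type iteration sidesteps entirely.
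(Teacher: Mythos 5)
Your proof is correct, but it follows a genuinely different route from the paper's. What you give is Reid's classical iteration argument: Cauchy--Schwarz for the positive semidefinite form $(C\,\cdot\,,\cdot\,)$, bootstrapped along the powers $c_{2^k}=\langle B^{2^k}x,x\rangle_C$, with the growth $\|B\|^{2^n}$ tamed by the $2^{-n}$-th root in the limit; all steps (the identity $AC=CA^{*}$, the symmetry of $B=A^{*}$ for the form, the chain $|c_1|\le c_{2^n}^{1/2^n}c_0^{1-1/2^n}$, and the degenerate case $c_0=0$) check out. The paper instead takes precisely the ``spectral alternative'' you sketch and then set aside as problematic: it resolves the non-invertibility of $C$ not by restricting to $\overline{\ran C}$ but by regularizing, replacing $C$ by $C+\epsilon$, conjugating $AC$ by $(C+\epsilon)^{-1/2}$, and using the cyclic invariance $\spr(FG)=\spr(GF)$ together with the fact that norm equals spectral radius for self-adjoint operators to get
\[
\bigl\|(C+\epsilon)^{-\frac12}AC(C+\epsilon)^{-\frac12}\bigr\|
=\spr\bigl(AC(C+\epsilon)^{-1}\bigr)\le \|A\|\,\tfrac{\|C\|}{\|C\|+\epsilon}\,,
\]
after which Cauchy--Schwarz and $\epsilon\searrow 0$ finish the proof. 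So the approximation you anticipated would be needed is exactly the paper's device. As for what each approach buys: your iteration is more elementary --- it needs only Cauchy--Schwarz for a positive semidefinite Hermitian form (even your appeal to $C^{1/2}$ is dispensable), no functional calculus and no spectral-radius facts; the paper's argument is shorter, stays within the spectral-calculus toolbox it uses throughout, and makes the constant $\|A\|$ appear transparently as a spectral-radius bound.
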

\begin{proof}
    Using the functional calculus for the self-adjoint operator $C$ we see that $C+\epsilon$
    is boundedly invertible for any $\epsilon>0$, and $C(C+\epsilon)^{-1}$ has norm 
    $\sup_{t\in \sigma(C)} \frac{t}{t+\epsilon} = \frac{\|C\|}{\|C\|+\epsilon}$. 
    
    Since for the spectral radius we have $\spr(FG)=\spr(GF)$ for all bounded operators $F,G$,
    \[
	\spr((C+\epsilon)^{-\frac{1}{2}}AC(C+\epsilon)^{-\frac{1}{2}}) =
	\spr(AC (C+\epsilon)^{-1}) \leq \|A\| \ \frac{\|C\|}{\|C\|+\epsilon} \,.
    \]
    For self-adjoint operators spectral radius and norm coincide. Hence, due to the Cauchy-Schwarz inequality,
\begin{align*}
	|(AC x,x)| & = |((C+\epsilon)^{-\frac{1}{2}}AC(C+\epsilon)^{-\frac{1}{2}}
		    \ (C+\epsilon)^{\frac{1}{2}}x,(C+\epsilon)^{\frac{1}{2}}x)| \\ & \leq
		    \|(C+\epsilon)^{-\frac{1}{2}}AC(C+\epsilon)^{-\frac{1}{2}}\| \ \|(C+\epsilon)^{\frac{1}{2}}x \|^2
		 \\ & \leq \|A\| \ \frac{\|C\|}{\|C\|+\epsilon} ((C+\epsilon) x,x) \,.
\end{align*}
    The desired inequality follows for $\epsilon\searrow 0$.
\end{proof}

The following result can easily be derived from the spectral calculus for self-adjoint operators on Hilbert spaces. 
We omit the details.

\begin{lemma}\thlab{selbadjbeschr}
    Let $(\mc H,(.,.))$ be a Hilbert space, $c\in [0,+\infty)$ and let $B$ be a self-adjoint operator.
    If $|(Bx,x)| \leq c (x,x)$ for $x\in \dom B$, then $B$ is bounded with $\|B\| \leq c$.
\end{lemma}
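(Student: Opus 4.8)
The plan is to invoke the spectral theorem for the (possibly unbounded) self-adjoint operator $B$ and to show that its spectral measure is concentrated on $[-c,c]$; boundedness with $\|B\|\le c$ then follows at once. I would write $B=\int_{\bb R}\lambda\,dE(\lambda)$, where $E$ is the spectral measure of $B$, and for $x\in\mc H$ let $\mu_x$ denote the finite positive Borel measure $\Delta\mapsto (E(\Delta)x,x)$, so that $\mu_x(\bb R)=(x,x)$. Recall that $x\in\dom B$ precisely when $\int_{\bb R}\lambda^2\,d\mu_x(\lambda)<\infty$, in which case $(Bx,x)=\int_{\bb R}\lambda\,d\mu_x(\lambda)$.

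The key step is to rule out spectral mass outside $[-c,c]$, i.e.\ to prove $E\big((-\infty,-c)\cup(c,\infty)\big)=0$. Suppose, towards a contradiction, that $E\big((c,\infty)\big)\neq 0$. Since $(c,\infty)=\bigcup_n (c,n]$ and $E$ is $\sigma$-additive, there is an $n$ with $E\big((c,n]\big)\neq 0$; I would pick a nonzero $x\in\ran E\big((c,n]\big)$. As $(c,n]$ is bounded, $x\in\dom B$, and $\mu_x$ is carried by $(c,n]$ with total mass $(x,x)>0$. Consequently $(Bx,x)-c(x,x)=\int_{(c,n]}(\lambda-c)\,d\mu_x(\lambda)>0$, because the integrand is strictly positive on the set carrying the measure. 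This gives $(Bx,x)>c(x,x)$, contradicting the hypothesis $|(Bx,x)|\le c(x,x)$. The same argument applied to $[-n,-c)$ shows $E\big((-\infty,-c)\big)=0$: a nonzero $x\in\ran E\big([-n,-c)\big)$ yields $(Bx,x)=\int_{[-n,-c)}\lambda\,d\mu_x(\lambda)<-c(x,x)$, again violating the bound.

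Hence $E\big([-c,c]\big)=I$, so for every $x\in\mc H$ we have $\int_{\bb R}\lambda^2\,d\mu_x(\lambda)=\int_{[-c,c]}\lambda^2\,d\mu_x(\lambda)\le c^2(x,x)<\infty$. This shows $\dom B=\mc H$ and $\|Bx\|^2=\int_{[-c,c]}\lambda^2\,d\mu_x(\lambda)\le c^2\|x\|^2$, that is, $B\in B(\mc H)$ with $\|B\|\le c$.

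The only genuine obstacle is the domain bookkeeping forced by $B$ being a priori unbounded: one cannot test directly with vectors in $\ran E\big((c,\infty)\big)$, since these need not lie in $\dom B$. Truncating to the bounded intervals $(c,n]$ and $[-n,-c)$ circumvents this, and the strict positivity of $\lambda-c$ (respectively strict negativity of $\lambda$) on these sets is exactly what converts the assumed numerical bound into a contradiction. Everything else is a routine application of the spectral theorem.
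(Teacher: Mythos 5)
Your proof is correct, and it takes exactly the route the paper intends: the paper omits the argument entirely, remarking only that the lemma ``can easily be derived from the spectral calculus for self-adjoint operators on Hilbert spaces,'' and your spectral-measure argument is precisely such a derivation. The details you supply are sound, including the key domain point that vectors in $\ran E\big((c,n]\big)$ lie in $\dom B$ because $(c,n]$ is bounded, which is what makes the contradiction argument legitimate for an a priori unbounded $B$.
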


The ideas in the subsequent lemma are taken from \cite{dritschelrovnyak1996}.

\begin{lemma}\thlab{contranspo}
    With the notation and assumptions from \thref{kommmitransf} additionally suppose that 
    $T$ is injective, that $\mc V$ is a Hilbert space and that
    $A: \mc K \to \mc K$ is bounded. Then $(T\times T)^{-1}(A)$ is a bounded
    linear and self-adjoint operator on $\mc V$ with 
    \begin{equation}\label{contranspoeq}
	\| \ (T\times T)^{-1}(A) \ \| \leq \|A\| \,.
    \end{equation}
    Here $\|.\|$ on the right is the operator norm with respect to any Hilbert space scalar product $(.,.)$
    on $\mc K$ compatible with $[.,.]$, i.e.\ $[.,.]=(G.,.)$ for some 
    $(.,.)$-self-adjoint, bounded and boundedly invertible Gram operator
    $G: \mc K \to \mc K$.
\end{lemma}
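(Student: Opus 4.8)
The plan is to obtain self-adjointness from \thref{kommmitransf} and then to prove the norm estimate via the quadratic-form criterion \thref{selbadjbeschr}, fed by the positivity inequality \thref{wuzuab}. Since $A\in B(\mc K)$, the spectrum $\sigma(A)$ is a bounded subset of $\bb C$, so $\bb C\setminus\sigma_p(A)$ certainly meets both $\bb C^+$ and $\bb C^-$; hence the last assertion of \thref{kommmitransf} applies and $B:=(T\times T)^{-1}(A)$ is self-adjoint. Moreover \thref{cortranpo} with $\lambda=\infty$ gives $\mul B=T^{-1}(\mul A)=\ker T=\{0\}$, so $B$ is in fact a self-adjoint operator. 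Writing the standing hypothesis $(TT^+\times TT^+)(A^+)\subseteq A$ out for the (everywhere defined, bounded) operators $A$ and $A^+$, it just reads $ATT^+=TT^+A^+$, and this is the algebraic identity I will exploit.

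Fix one compatible Hilbert scalar product $(.,.)$ on $\mc K$ with Gram operator $G$, so that $[x,y]_{\mc K}=(Gx,y)$ and $A^+=G^{-1}A^*G$. The crucial point is to apply \thref{wuzuab} on the Hilbert space $(\mc K,(.,.))$ not to the pair $(GAG^{-1},\,GTT^+)$ -- which would only yield the spurious constant $\|GAG^{-1}\|$ -- but to $A$ itself together with $C:=TT^+G^{-1}$. Indeed, since $TT^+$ is $[.,.]_{\mc K}$-self-adjoint one checks $C^*=G^{-1}(TT^+)^*=TT^+G^{-1}=C$; writing $w:=G^{-1}x$ one gets $(Cx,x)=[TT^+w,w]_{\mc K}=[T^+w,T^+w]_{\mc V}\ge0$; and the identity $ATT^+=TT^+A^+$ gives $AC=TT^+G^{-1}A^*=CA^*$, so $AC$ is $(.,.)$-self-adjoint. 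Thus \thref{wuzuab} yields $|(ACx,x)|\le\|A\|\,(Cx,x)$, which after the substitution $x=Gw$ becomes
\[
   |[ATT^+w,w]_{\mc K}|\le\|A\|\,[T^+w,T^+w]_{\mc V}\qquad(w\in\mc K).
\]

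Next I translate this into a form bound for $B$ along $\ran T^+$. For $w\in\mc K$ set $u:=T^+w$. From $ATT^+w=TT^+A^+w=T(T^+A^+w)\in\ran T$ it follows that $u\in\dom B$ with $Bu=T^+A^+w$; using the defining relation $[Tz,w]_{\mc K}=[z,T^+w]_{\mc V}$ of the Krein adjoint, $[ATT^+w,w]_{\mc K}=[T(T^+A^+w),w]_{\mc K}=[T^+A^+w,T^+w]_{\mc V}=[Bu,u]_{\mc V}$, while the right-hand side equals $\|A\|\,[u,u]_{\mc V}$. Hence $|[Bu,u]_{\mc V}|\le\|A\|\,[u,u]_{\mc V}$ for every $u\in\ran T^+$.

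The remaining, and genuinely delicate, step is that \thref{selbadjbeschr} requires the diagonal bound on all of $\dom B$, whereas the computation only delivers it on $\ran T^+$. I would upgrade it as follows. Because $T$ is injective, $\ran T^+$ is dense in $\mc V$: if $u\perp\ran T^+$ then $[Tu,x]_{\mc K}=[u,T^+x]_{\mc V}^{-}=0$ for all $x$, so $Tu=0$ and $u=0$. On the subspace $\ran T^+$ the operator $B$ is symmetric, and a Hermitian form bounded on the diagonal is bounded off it; polarization therefore upgrades the estimate to $|[Bu,u']_{\mc V}|\le\|A\|\,[u,u]_{\mc V}^{1/2}[u',u']_{\mc V}^{1/2}$ on $\ran T^+$, whence $\|Bu\|\le\|A\|\,\|u\|$ there. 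Consequently $B|_{\ran T^+}$ extends to a bounded self-adjoint $B_0\in B(\mc V)$ with $\|B_0\|\le\|A\|$; since $B$ is closed and agrees with $B_0$ on the dense set $\ran T^+$, we have $B_0\subseteq B$, and as both are self-adjoint, $B_0=B$. This gives $B\in B(\mc V)$ with $\|B\|\le\|A\|$. I expect the density/extension argument of this final paragraph -- rather than any algebraic verification or the plain invocation of \thref{wuzuab} -- to be the main obstacle, the second key point being the choice $C=TT^+G^{-1}$, which is precisely what keeps the constant equal to $\|A\|$.
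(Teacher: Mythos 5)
Your proposal is correct, and its core is the same as the paper's proof: you invoke \thref{kommmitransf} and \thref{cortranpo} to get a self-adjoint operator, extract the identity $ATT^+=TT^+A^+$ from \eqref{invarvss} via \thref{comminter}, and then apply \thref{wuzuab} with exactly the operator the paper uses --- your $C=TT^+G^{-1}$ is literally the paper's $C=TT^{*}$ (since $T^+=T^{*}G$), and your substitution $x=Gw$ is the paper's evaluation at $Gu$; the resulting bound $|[Bu,u]|\leq\|A\|\,[u,u]$ on $\ran T^+$ is the paper's bound $|[y,x]|\leq\|A\|\,[x,x]$ on pairs $(x;y)\in(T^+\times T^+)(A^+)$. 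The only place you genuinely deviate is the final upgrading step, which you correctly identified as delicate. The paper disposes of it more quickly: by \thref{kommmitransf} (via \thref{adjungeig}) the subgraph $(T^+\times T^+)(A^+)$ is already known to be \emph{dense in the graph} of $B=(T\times T)^{-1}(A)$, so the diagonal estimate passes by continuity to all $(x;y)\in B$, and \thref{selbadjbeschr} then gives boundedness with $\|B\|\leq\|A\|$ on all of $\dom B$ at once. You instead use only the density of $\ran T^+$ in $\mc V$, polarize the Hermitian form $[Bu,v]$ on $\ran T^+$, extend $B|_{\ran T^+}$ to a bounded self-adjoint $B_0$, and conclude $B_0=B$ from maximality of self-adjoint operators. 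Both arguments are sound; the paper's is shorter because the graph density is already in hand, while yours buys independence from \thref{selbadjbeschr} and from the graph-density statement, at the cost of the extension-and-maximality bookkeeping.
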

\begin{proof}
    $\sigma(A) \subseteq K_{\|A\|}(0)$ yields $\bb C \setminus K_{\|A\|}(0) 
    \subseteq (\bb C\cup\{\infty\}) \setminus \sigma_p(A)$.
    In particular, $\bb C\setminus \sigma_p(A)$ contains points from $\bb C^+$ and from $\bb C^-$.
    
    By \thref{kommmitransf} the relation $(T\times T)^{-1}(A)$ is self-adjoint and coincides with the closure 
    of $(T^+ \times T^+)(A^+)$; see \thref{pezcxhpre}. 
    According to \thref{cortranpo} (applied with $\lambda=\infty$) 
    we also know that $(T\times T)^{-1}(A)$ is an operator.    

    Due to \thref{comminter} by our assumption \eqref{invarvss} we have 
    $TT^+ A^+ = A TT^+$. Since the adjoints with respect to the respective scalar products on $\mc K$ are 
    related by $T^+ = T^{*} G$ and $A^+ = G^{-1}A^{*} G$, we have 
    $TT^{*}A^{*} G = TT^+A^{+} = A TT^+ = A TT^{*} G$.
    Consequently, $(A TT^{*})^{*} = TT^{*}A^{*} = A TT^{*}$
    is self-adjoint on the Hilbert space $(\mc K,(.,.))$. 
    
    For $(x;y) \in (T^+ \times T^+)(A^+) \subseteq (T\times T)^{-1}(A)$ we have $x=T^+ u$
    for some $u\in \mc K$. We conclude that $(TT^+u;Ty) \in A$ or $A(TT^+u)=Ty$, and hence
    \[
	|[y,x]| = |[y,T^+u]| = |[Ty,u]| = |[ATT^+ u,u]| = |(ATT^{*} G u, Gu)| \,.
    \]
    By \thref{wuzuab} this expressions is less or equal to  
    \[
	\|A\| \ (TT^{*} G u, Gu) = \|A\| \ [TT^+ u,u] = \|A\| \ [x,x] \,.
    \]
    $(T^+ \times T^+)(A^+)$ being dense in $(T\times T)^{-1}(A)$ implies
    $|[y,x]| \leq \|A\| \ [x,x]$ for all $(x;y)\in (T\times T)^{-1}(A)$. Therefore,
    according to \thref{selbadjbeschr}, $(T\times T)^{-1}(A)$ is a bounded and self-adjoint operator
    with norm less or equal to $\|A\|$.
\end{proof}

\begin{theorem}\thlab{thetadefeig}
    Let $T: \mc V \to \mc K$ be a bounded and injective linear mapping from the Hilbert space $(\mc V,[.,.])$ 
    into the Krein space $(\mc K,[.,.])$. Then
    \[
	\Theta : C \mapsto (T\times T)^{-1}(C)
    \]
    constitutes a bounded $*$-algebra homomorphism from 
    $(TT^+)' \ (\subseteq B(\mc K))$ into
    $(T^+T)' \ (\subseteq B(\mc V))$, where $(TT^+)'$ denotes commutant of 
    $TT^+$ in $B(\mc K)$ and $(T^+T)'$ denotes commutant of 
    $T^+T$ in $B(\mc V)$. 
    Hereby, $\Theta(I) = I$, $\Theta(TT^+) = T^+T$, and
    \[
	\ker \Theta=\{C\in (TT^+)': \ran C \subseteq \ker T^+\} \,.
    \]
    Moreover, $(T^+ \times T^+)(C)$ is densely contained in $\Theta(C)$ for all $C\in (TT^+)'$, and
    we have $T^+ C =  \Theta(C) T^+$.
\end{theorem}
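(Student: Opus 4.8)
The plan is to treat $\Theta(C) := (T\times T)^{-1}(C)$ first as a linear relation, identify it with a bounded operator, and only then read off the algebraic properties. The starting observation is that $C \in (TT^+)'$ is precisely the condition needed to drag the graph of $C$ along $T^+$: a pair $(T^+u; T^+Cu)$ lies in $\Theta(C)$ exactly when $TT^+Cu = C\,TT^+u$, so $C \in (TT^+)'$ gives $(T^+\times T^+)(C) \subseteq \Theta(C)$. Since $T$ is injective and $C$ is an operator, \thref{cortranpo} (with $\lambda=\infty$) yields $\mul \Theta(C) = T^{-1}\mul C = \ker T = \{0\}$, so $\Theta(C)$ is single-valued; and because $\ker T = \{0\}$ forces $\overline{\ran T^+} = \mc V$ (as $\ker T = (\ran T^+)^{[\bot]}$ and $\mc V$ is a Hilbert space), the inclusion above exhibits $\Theta(C)$ as an operator whose restriction to the dense subspace $\ran T^+$ acts by $T^+u \mapsto T^+Cu$. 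In particular $T^+C = \Theta(C)T^+$ on $\ran T^+$, and the density of $(T^+\times T^+)(C)$ in $\Theta(C)$ will follow once boundedness is in hand.

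The central step, and the one I expect to be the main obstacle, is to upgrade this densely defined operator to a bounded, everywhere defined one. Here I would invoke \thref{contranspo}, which applies to a bounded $A$ satisfying the invariance \eqref{invarvss}. A general $C \in (TT^+)'$ need not satisfy \eqref{invarvss}, but its Krein-self-adjoint parts do: writing $C = C_1 + iC_2$ with $C_1 = \tfrac12(C + C^+)$ and $C_2 = \tfrac{1}{2i}(C - C^+)$, one checks $C_1^+ = C_1$, $C_2^+ = C_2$, and (taking Krein adjoints of $C\,TT^+ = TT^+C$, using $(TT^+)^+ = TT^+$) that $C^+ \in (TT^+)'$, hence $C_1, C_2 \in (TT^+)'$. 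For a Krein-self-adjoint $C_j$ the invariance $TT^+C_j^+ = C_j TT^+$ is immediate from $C_j^+ = C_j$ and $C_j \in (TT^+)'$, so \thref{contranspo} makes $\Theta(C_1), \Theta(C_2)$ bounded self-adjoint operators on $\mc V$. On $\ran T^+$ the direct computation $\Theta(C)T^+u = T^+Cu = T^+C_1u + iT^+C_2u = (\Theta(C_1)+i\Theta(C_2))T^+u$ shows that $\Theta(C)$ agrees with the bounded operator $D := \Theta(C_1)+i\Theta(C_2)$ on the dense set $\ran T^+$. Since $\Theta(C)=(T\times T)^{-1}(C)$ is closed ($C$ closed, $T$ bounded) and single-valued, approximating an arbitrary $x\in\mc V$ by elements of $\ran T^+$ and passing to the limit of $(x_n; Dx_n)\in\Theta(C)$ forces $(x;Dx)\in\Theta(C)$; single-valuedness then gives $\Theta(C)=D\in B(\mc V)$, with $\|\Theta(C)\|\le\|C_1\|+\|C_2\|$, so $\Theta$ is bounded.

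With boundedness secured, the algebraic identities drop out from the inclusions of \thref{fhwr5} combined with the principle that a bounded, everywhere defined operator containing (as a relation) another everywhere defined operator must equal it. A direct calculation gives $\Theta(I)=I$ (using $\ker T=\{0\}$) and $T^+T\subseteq\Theta(TT^+)$, whence $\Theta(TT^+)=T^+T$. For $C,D\in(TT^+)'$ one first notes that $C+D$, $CD$ and $\mu C$ again lie in $(TT^+)'$; then \thref{fhwr5} yields $\Theta(C)+\Theta(D)\subseteq\Theta(C+D)$ and $\Theta(C)\Theta(D)\subseteq\Theta(CD)$, and the equality principle upgrades both inclusions to equalities, while $(T\times T)^{-1}(\mu C)=\mu(T\times T)^{-1}(C)$ together with $\Theta(0)=0$ gives homogeneity. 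Hence $\Theta$ is a unital algebra homomorphism into $B(\mc V)$. The $*$-property follows by linearity from \thref{contranspo}: for self-adjoint $C$ that lemma makes $\Theta(C)$ self-adjoint, so for $C=C_1+iC_2$ one gets $\Theta(C)^*=\Theta(C_1)-i\Theta(C_2)=\Theta(C_1-iC_2)=\Theta(C^+)$. Multiplicativity applied to $C\,TT^+=TT^+C$ then gives $\Theta(C)(T^+T)=(T^+T)\Theta(C)$, i.e. $\Theta(C)\in(T^+T)'$.

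Finally, the kernel is read off from the dense action: since $\Theta(C)$ is continuous and $\ran T^+$ is dense, $\Theta(C)=0$ holds if and only if $\Theta(C)T^+u=T^+Cu=0$ for all $u\in\mc K$, i.e. if and only if $\ran C\subseteq\ker T^+$, which is exactly $\ker\Theta=\{C\in(TT^+)':\ran C\subseteq\ker T^+\}$. The remaining assertions, namely that $(T^+\times T^+)(C)$ is densely contained in $\Theta(C)$ and that $T^+C=\Theta(C)T^+$, are then immediate, the graph of $\Theta(C)$ over the dense set $\ran T^+$ being dense in its full graph by continuity.
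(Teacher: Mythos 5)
Your proposal is correct and takes essentially the same route as the paper: both reduce the general case to Krein-self-adjoint elements of $(TT^+)'$ via the decomposition into real and imaginary parts $\frac{1}{2}(C+C^+)$, $\frac{1}{2i}(C-C^+)$, apply \thref{contranspo} to those, and then recover the algebraic identities from relation inclusions combined with single-valuedness of $(T\times T)^{-1}(\cdot)$. The only deviations are in execution, and all of them are sound: you identify $\Theta(C)$ with $\Theta(C_1)+i\Theta(C_2)$ by dense agreement on $\ran T^+$ plus closedness, where the paper instead uses the inclusion of \thref{fhwr5}; you bound $\|\Theta(C)\|$ by $\|C_1\|+\|C_2\|$, where the paper uses the sharper $C^*$-style estimate $\|\Theta(C)\|^2\leq\|\Theta(C^+C)\|$; and you derive the density of $(T^+\times T^+)(C)$ in $\Theta(C)$, the kernel description, and $T^+C=\Theta(C)T^+$ from continuity and density of $\ran T^+$, where the paper invokes the adjoint computation of \thref{adjungeig}.
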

\begin{proof}
    It is easy to check that $(TT^+)' \subseteq B(\mc K)$ and $(T^+T)' \subseteq B(\mc V)$ are
    closed $*$-subalgebras when provided with $.^+$ and $.^{*}$, respectively.
    By \thref{comminter} we have
    $(TT^+ \times TT^+)(C^+)=(TT^+ \times TT^+)(C) \subseteq C$ for any self-adjoint $C\in (TT^+)'$. 
    Thus, we can apply \thref{contranspo} and see that $(T\times T)^{-1}(C)$ is a bounded self-adjoint linear mapping
    on $\mc V$ containing $(T^+ \times T^+)(C)$ densely. Due to
    \[
	(T^+T \times T^+T) \ (T\times T)^{-1}(C) \subseteq (T^+ \times T^+)(C) \subseteq (T\times T)^{-1}(C) 
    \]
    and \thref{comminter} we have $(T\times T)^{-1}(C) \in (T^+T)'$. 
    Clearly, 
    $(T\times T)^{-1}(I) = T^{-1}T=I$ and $(T\times T)^{-1}(TT^+) = T^{-1} TT^+ T = T^+ T$.
    
    For a not necessarily self-adjoint $C\in (TT^+)'$ we also have $C^+\in (TT^+)'$.
    Hence, $C=\Re C + i \Im C$ and $C^+=\Re C - i \Im C$ with  
    \[
	\Re C=\frac{C+C^+}{2}, \ \ \Im C= \frac{C-C^+}{2i} \in (TT^+)' \,.
    \]
    Consequently, $(T\times T)^{-1}(\Re C)$ and $(T\times T)^{-1}(\Im C)$ are self-adjoint
    elements from $(T^+T)'$. Moreover, by \thref{fhwr5}
\[
	(T\times T)^{-1}(\Re C + i\Im C) \supseteq 
				(T\times T)^{-1}(\Re C) + i(T\times T)^{-1}(\Im C) \,,
\]
\[
	(T\times T)^{-1}(\Re C - i\Im C) \supseteq  
				  (T\times T)^{-1}(\Re C) - i(T\times T)^{-1}(\Im C) \,,
\]
    where the right hand sides have domain $\mc V$ and the left hand sides are operators; see \thref{contranspo}.
    Consequently, equalities prevail, and we obtain $(T\times T)^{-1}(C)\in (T^+T)'$ and $(T\times T)^{-1}(C^+) 
    = (T\times T)^{-1}(C)^{*}$. 
    Therefore, $\Theta: (TT^+)' \to (T^+T)'$ is well-defined and satisfies $\Theta(C^+)=\Theta(C)^{*}$.
    Using \thref{fhwr5} two more times shows that $\Theta$ is linear and multiplicative.
    Employing \eqref{contranspoeq} we get ($G$ and $(.,.)$ are as in \thref{contranspo})
    \begin{multline*}
	\|\Theta(C)\|^2 = \sup_{x\in\mc V, [x,x]=1} [\Theta(C)x,\Theta(C)x] = 
	\sup_{x\in\mc V, [x,x]=1} [\Theta(C^+C) x,x] \leq \\ \|\Theta(C^+C)\| \leq \|C^+C\| = 
	\|G^{-1}C^{*}G C\| \leq \|G^{-1}\| \, \|G\| \cdot \|C\|^2 \,, 
    \end{multline*}
    and conclude that $\Theta$ is bounded. From \thref{adjungeig} we infer
    \[
	\left((T^+ \times T^+)(C)\right)^{*} = (T\times T)^{-1}(C^{+}) = (T\times T)^{-1}(C)^{*} 
    \]
    showing that $(T^+ \times T^+)(C)$ is densely contained in $(T\times T)^{-1}(C)$. In particular,
    $(T\times T)^{-1}(C)=\Theta(C)=0$ is equivalent to the fact that $(a;b) \in (T^+ \times T^+)(C)$
    always implies $b=0$, i.e., $T^+ y = 0$ for all $(x;y) \in C$. This just means that 
    $\ran C$ is contained in $\ker T^+$.
    
    Finally, we have $[TT^+ C u,v] = [T^+ C u, T^+ v] = [\Theta(C) T^+ u, T^+ v]$
    for any $u,v\in \mc K$ because of $(T^+ u; T^+ C u) \in \Theta(C)$.
    From this equality we obtain $TT^+ C = T\Theta(C) T^+$ which by $T$'s injectivity implies
    $T^+ C =  \Theta(C) T^+$.
\end{proof}

\begin{remark}\thlab{thetaremretour}
    For $C\in (TT^+)'$ we can apply \thref{2xanwe}, and obtain
    \[
	(T^+\times T^+)^{-1} \Theta(C) =
	(TT^+\times TT^+)^{-1}(C) = C \boxplus (\ker TT^+ \times\ker TT^+) \,,
    \]
    where $\ker TT^+ = \ker T^+$ by $T$'s injectivity.
\end{remark}

For linear relations with non-empty resolvent sets, we can apply the previous result to
a M\"obius type transformation of the given linear relation.

\begin{corollary}\thlab{umktheta}
    Let $T: \mc V \to \mc K$ be a bounded and injective linear mapping from the Hilbert space $(\mc V,[.,.])$ 
    into the Krein space $(\mc K,[.,.])$. Let $C$ be a linear relation on $\mc K$ with $\rho(C)\neq \emptyset$
    and $(TT^+ \times TT^+)(C)\subseteq C$.
		
    Then, the linear relation $\Theta(C):=(T\times T)^{-1}(C)$ on $\mc V$ densely contains
    $(T^+ \times T^+)(C)$ and satisfies $(T^+T \times T^+T)(\Theta(C)) \subseteq \Theta(C)$.
    Moreover, $C^+$ also satisfies $(TT^+ \times TT^+)(C^+)\subseteq C^+$ 
    and $\Theta(C^+) = \Theta(C)^*$.
    Finally, $\rho(\Theta(C)) \supseteq \rho(C)$, and
    $\Theta(r(C)) = r(\Theta(C))$ holds true for all $r\in \bb C_{\rho(C)}(z)$.
\end{corollary}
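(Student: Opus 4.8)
The plan is to reduce every assertion to the bounded-operator situation of \thref{thetadefeig} by passing to a single resolvent of $C$. Since $\rho(C)$ is open and nonempty, it contains a point $\lambda\in\bb C$, and $R:=(C-\lambda)^{-1}=\tau_M(C)\in B(\mc K)$ for the regular matrix $M$ provided by \eqref{bspfmoeb}. The hypothesis $(TT^+\times TT^+)(C)\subseteq C$ together with \thref{ratcalculcom} shows that $TT^+$ commutes with every $r(C)$, in particular $R\in(TT^+)'$; hence \thref{thetadefeig} applies to $R$ and yields a bounded operator $\Theta(R)=(T\times T)^{-1}(R)\in(T^+T)'$ which densely contains $(T^+\times T^+)(R)$. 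By \eqref{zzue4} and \thref{moebtrafoeig} the two transforms are linked by $\tau_M(\Theta(C))=\Theta(R)$, i.e.\ $\Theta(C)=\tau_{M^{-1}}(\Theta(R))$, and this single identity will carry all the structure of $\Theta(R)$ back to $\Theta(C)$.

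For the first two assertions I would exploit that $\tau_{M^{-1}}$ is a linear homeomorphism of $\mc V\times\mc V$ that commutes with the forward transforms $(T^+\times T^+)(\cdot)$ and $(T^+T\times T^+T)(\cdot)$. Applying $\tau_{M^{-1}}$ to the dense inclusion $(T^+\times T^+)(R)\subseteq\Theta(R)$ and using $\tau_{M^{-1}}((T^+\times T^+)(R))=(T^+\times T^+)(\tau_{M^{-1}}(R))=(T^+\times T^+)(C)$ gives that $(T^+\times T^+)(C)$ is dense in the closed relation $\Theta(C)$ (closed because $C$ is closed and $T\times T$ is bounded). Likewise, from $\Theta(R)\in(T^+T)'$, i.e.\ $(T^+T\times T^+T)(\Theta(R))\subseteq\Theta(R)$ by \thref{comminter}, applying $\tau_{M^{-1}}$ yields $(T^+T\times T^+T)(\Theta(C))\subseteq\Theta(C)$.

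For the statements about $C^+$, note that $TT^+$ is Krein-self-adjoint, so $R\in(TT^+)'$ forces $R^+=(C^+-\bar\lambda)^{-1}\in(TT^+)'$; undoing the Möbius transform then gives $(TT^+\times TT^+)(C^+)\subseteq C^+$, so the whole construction applies verbatim to $C^+$. For the adjoint identity I would avoid any explicit matrix manipulation and route through \thref{adjungeig}: since $C$ is closed ($C^{++}=C$), that lemma gives $\Theta(C)^*=((T\times T)^{-1}(C))^*=\overline{(T^+\times T^+)(C^+)}$; on the other hand the density statement applied to $C^+$ together with the closedness of $\Theta(C^+)$ gives $\Theta(C^+)=\overline{(T^+\times T^+)(C^+)}$, whence $\Theta(C^+)=\Theta(C)^*$.

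Finally, for the resolvent and the rational calculus the key computation is that $\Theta$ intertwines the two resolvents: for $\mu\in\rho(C)$ one has $(C-\mu)^{-1}\in(TT^+)'$, and combining \eqref{zzue4} with \eqref{bspfmoeb} shows $\Theta((C-\mu)^{-1})=(\Theta(C)-\mu)^{-1}$, the left-hand side being a bounded everywhere-defined operator by \thref{thetadefeig}; hence $\mu\in\rho(\Theta(C))$ (the case $\infty\in\rho(C)$, where $C$ and $\Theta(C)$ are bounded, is immediate). With $\rho(\Theta(C))\supseteq\rho(C)$ secured, $r(\Theta(C))$ is defined for every $r\in\bb C_{\rho(C)}(z)$, and feeding the partial fraction decomposition \eqref{fracdec} into the algebra homomorphism $\Theta$ of \thref{thetadefeig} — using $\Theta((C-\alpha_k)^{-j})=(\Theta(C)-\alpha_k)^{-j}$ and $\Theta(p(C))=p(\Theta(C))$ — yields $\Theta(r(C))=r(\Theta(C))$. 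I expect the adjoint identity $\Theta(C^+)=\Theta(C)^*$ to be the main obstacle: the naive approach through the conjugation matrix $J\,(M^*)^{-1}J^{-1}$ is error-prone, and the clean route requires combining \thref{adjungeig} with the closedness of the inverse-image relations rather than computing directly.
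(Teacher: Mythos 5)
Your proposal is correct and follows essentially the same route as the paper: pass to a resolvent $R=(C-\lambda)^{-1}\in(TT^+)'$, apply \thref{thetadefeig}, transport the density and the $T^+T$-commutation back through $\tau_{M^{-1}}$ via \eqref{zzue4}, obtain the adjoint identity from \thref{adjungeig} combined with density and closedness, and read off the resolvent inclusion and the rational calculus from $\Theta((C-\mu)^{-1})=(\Theta(C)-\mu)^{-1}$. The only deviations are cosmetic: you invoke \thref{ratcalculcom} where the paper uses \eqref{zzue4} together with \thref{comminter}, and you apply \thref{adjungeig} with $C^+$ in place of $C$ (using $C^{++}=C$), which is the mirror image of the paper's computation and yields the same identity.
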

\begin{proof}
    We apply $\tau_M$ with $M=\bigl(\begin{smallmatrix} 0 & 1 \\ 1 & -\lambda \end{smallmatrix}\bigr)$ 
    to $(TT^+ \times TT^+)(C)\subseteq C$ and obtain  
    $(TT^+ \times TT^+)((C-\lambda)^{-1})\subseteq (C-\lambda)^{-1}$ for any $\lambda \in \rho(C)$; see
    \eqref{zzue4}.
    
    Since then $(C-\lambda)^{-1}$ commutes with $TT^+$ (see \thref{comminter}), by \thref{thetadefeig} 
    $(T\times T)^{-1}((C-\lambda)^{-1})$ is a bounded operator commuting with $T^+T$, i.e.
    \begin{equation}\label{zklf5}
      (T^+T \times T^+T)(T\times T)^{-1}\big((C-\lambda)^{-1}\big) 
		    \subseteq (T\times T)^{-1}\big((C-\lambda)^{-1}\big) \,.
    \end{equation}
    Moreover, $(T^+ \times T^+)((C-\lambda)^{-1})$ is densely contained in $(T\times T)^{-1}\big((C-\lambda)^{-1}\big)$.
    
    $(T\times T)^{-1}((C-\lambda)^{-1}) = (\Theta(C) - \lambda)^{-1}$ gives
    $\lambda \in \rho(\Theta(C))$. 
    Applying $\tau_{M^{-1}}$ to \eqref{zklf5} yields $(T^+T \times T^+T)(\Theta(C)) \subseteq \Theta(C)$. Since 
    $\tau_{M^{-1}}$ is bi-continuous,  
    $\tau_{M^{-1}}(T^+ \times T^+)((C-\lambda)^{-1}) = (T^+ \times T^+)(C)$
    is densely contained in $\Theta(C)=\tau_{M^{-1}}(T\times T)^{-1}\big((C-\lambda)^{-1}\big)$.
    
    Concerning $C^+$ note that by \thref{comminter},
    $(TT^+ \times TT^+)(C)\subseteq C$ is equivalent to $T T^+ C \subseteq C T T^+$. 
    Taking adjoints gives $T T^+ C^+ \subseteq  C^+T T^+$, which in turn is equivalent to
    $(TT^+ \times TT^+)(C^+)\subseteq C^+$. 
    With the help of \thref{adjungeig} and the fact that $(T^+ \times T^+)(C)$ is dense in $\Theta(C)$, we get
\begin{align*}
	\Theta(C^+) & = (T\times T)^{-1} (C^+) = (T^+ \times T^+)(C)^* = \Cl((T^+ \times T^+)(C))^* \\ & =
	(T\times T)^{-1}(C)^* = \Theta(C)^*.
\end{align*}
    Finally, $\Theta(r(C)) = r(\Theta(C))$ for $r\in \bb C_{\rho(C)}(z)$ follows in a straight forward manner from
    $(T\times T)^{-1}((C-\lambda)^{-1}) = (\Theta(C) - \lambda)^{-1}$.
\end{proof}

Via $\Theta$ we can drag certain linear relation on $\mc K$ to linear relations on $\mc V$.
We now present a way how to drag at least operators into the other direction.

\begin{lemma}\thlab{Xidefeig}
    Let $T: \mc V \to \mc K$ be a bounded and injective linear mapping from the Hilbert space $(\mc V,[.,.])$ 
    into the Krein space $(\mc K,[.,.])$. Then
    \[
	\Xi : D \mapsto T D T^+
    \]
    maps 
    $(T^+T)' \ (\subseteq B(\mc V))$ linearly and boundedly into $(TT^+)' \ ( \subseteq B(\mc K))$ and
    satisfies ($C, \in (TT^+)', \ D,D_1,D_2\in (T^+T)'$)
    \begin{align*}
	& \Xi(D^{*}) =  \Xi(D)^+, \ \ \
	\Xi(D \ \Theta(C)) = \Xi(D) C, \ \ \ \Xi(\Theta(C) \ D) = C \Xi(D), \\  
	& \Xi(D_1D_2 \ T^+T) = \Xi(D_1) \, \Xi(D_2), \ \ \
	\Xi\circ \Theta (C) = TT^+ \ C = C \ TT^+ \,. 
    \end{align*}
    Moreover, $\Xi(D)$ commutes with all operators from 
    $(TT^+)'$ if $D$ commutes with all operators from $(T^+T)'$, i.e.\ $\Xi((T^+T)'') \subseteq (TT^+)''$. 
\end{lemma}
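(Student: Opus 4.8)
The plan is to trace every assertion back to the intertwining identity $T^+ C = \Theta(C)\, T^+$ furnished by \thref{thetadefeig}, so that all statements about $\Xi$ become purely formal manipulations of bounded operators. The one preliminary fact I would establish first is the companion identity $C T = T\, \Theta(C)$ valid for all $C \in (TT^+)'$. To obtain it I would take the Krein adjoint of $T^+ C = \Theta(C)\, T^+$; using $(T^+)^+ = T$, the product rule $(AB)^+ = B^+ A^+$, and the fact that on the Hilbert space $\mc V$ the Krein adjoint coincides with the Hilbert adjoint, so that $\Theta(C)^+ = \Theta(C)^* = \Theta(C^+)$ (the last equality by \thref{thetadefeig}), this yields $C^+ T = T\, \Theta(C^+)$. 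Since $(TT^+)'$ is a $*$-subalgebra, replacing $C$ by $C^+$ gives $C T = T\, \Theta(C)$.

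With these two identities in hand, the rest is substitution. First I would check that $\Xi$ is well-defined into $(TT^+)'$: for $D \in (T^+T)'$ one computes $T D T^+ \cdot TT^+ = T D (T^+T) T^+ = T (T^+T) D T^+ = TT^+ \cdot T D T^+$, so $\Xi(D)$ commutes with $TT^+$; linearity is immediate and boundedness follows from $\|T D T^+\| \le \|T\|\,\|T^+\|\,\|D\|$ with respect to the fixed compatible Hilbert norms. Each of the five displayed relations is then a single line: $\Xi(D)^+ = (T D T^+)^+ = T D^+ T^+ = T D^* T^+ = \Xi(D^*)$; the identity $\Theta(C) T^+ = T^+ C$ gives $\Xi(D\,\Theta(C)) = T D \Theta(C) T^+ = T D T^+ C = \Xi(D)\, C$; the companion identity $T\,\Theta(C) = C T$ gives $\Xi(\Theta(C)\, D) = T \Theta(C) D T^+ = C\,\Xi(D)$; commuting $T^+T$ past $D_2$ gives $\Xi(D_1)\,\Xi(D_2) = T D_1 (T^+T) D_2 T^+ = T D_1 D_2 (T^+T) T^+ = \Xi(D_1 D_2\, T^+T)$; and combining both intertwining identities yields $\Xi \circ \Theta(C) = T \Theta(C) T^+ = C\, TT^+ = TT^+\, C$, the final equality because $C \in (TT^+)'$.

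For the last inclusion $\Xi((T^+T)'') \subseteq (TT^+)''$, I would fix $D \in (T^+T)''$ and an arbitrary $C \in (TT^+)'$ and compute $\Xi(D)\, C = T D T^+ C = T D\, \Theta(C)\, T^+$. Since $\Theta(C) \in (T^+T)'$ by \thref{thetadefeig} and $D$ commutes with every element of $(T^+T)'$, this equals $T \Theta(C)\, D\, T^+ = C\, T D T^+ = C\,\Xi(D)$, so $\Xi(D) \in (TT^+)''$. The only genuinely delicate point is the derivation of the companion relation $CT = T\,\Theta(C)$, where one must keep the Krein adjoint on $\mc K$ and the Hilbert adjoint on $\mc V$ carefully apart and exploit that $\Theta$ is a $*$-homomorphism; once that is secured, every remaining claim is a one-line rewriting.
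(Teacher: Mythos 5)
Your proof is correct and takes essentially the same route as the paper: both reduce everything to the intertwining identity $T^+C = \Theta(C)\,T^+$ from \thref{thetadefeig} together with the $*$-properties of $\Theta$, after which each displayed relation is a one-line substitution. The only difference is cosmetic: where the paper derives $\Xi(\Theta(C)\,D) = C\,\Xi(D)$ by applying $\Xi(D\,\Theta(C)) = \Xi(D)\,C$ to $D^{*}$ and $C^+$ and taking adjoints, you perform that adjoint step once and for all to obtain the companion identity $CT = T\,\Theta(C)$ and then substitute directly; both arguments are sound.
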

\begin{proof}
    $\Xi : D \mapsto T D T^+$ is clearly linear and it is bounded by 
    $\|T\| \ \|T^+\|$. Obviously, it satisfies 
    $\Xi(D)^+ = \Xi(D^{*})$. Its injectivity follows from $T$'s injectivity and from 
    $\Cl(\ran T^+) = \ker T^{\bot} = \mc V$. For $D\in (T^+T)'$ we have 
    \[
	\Xi(D) \ TT^+ = T D T^+ \ TT^+ = T \ T^+ T D T^+ = TT^+ \ \Xi(D) \,,
    \]
    i.e.\ $\Xi(D)\in (TT^+)'$. For $C\in (TT^+)', D \in (T^+T)'$ due to 
    $T^+ C =  \Theta(C) T^+$ we have  
    $\Xi(D\Theta(C)) = T D\Theta(C) T^+ = T D T^+ C = \Xi(D) C$.
    Applying this to $D^*, C^+$ and taking adjoints yields $\Xi(\Theta(C) D) = C \Xi(D)$.
    
    For $D_1,D_2 \in (T^+T)'$ we have
    \[
	\Xi(D_1D_2 \ T^+T) = T D_1 D_2 T^+TT^+ = T D_1 \ T^+T \ D_2 T^+ =\Xi(D_1) \ \Xi(D_2) \,,
    \]
    and $T^+ C =  \Theta(C) T^+$ 
    implies $\Xi\circ \Theta (C) = T \Theta (C) T^+ = TT^+ \ C = C \ TT^+$.
    
    Finally, assume that $D$ commutes with all operators from $(T^+T)'$, and let $C\in (TT^+)'$. Then
    $T^{-1} C T= \Theta(C) \in (T^+T)'$. Hence
    \[
	\Xi(D) C = \Xi(D \ \Theta(C)) = \Xi(\Theta(C) \ D) = C \Xi(D) \,.
    \]
\end{proof}

\section{Definitizable Linear Relations}
\label{deflinrel}

We start the present section with the definition of definitizability as given in \cite{jonas2003}, Section 4.

\begin{definition}\thlab{definitzdef}
    Let $(\mc K,[.,.])$ be a Krein space. A linear relation $A$ on $\mc K$ 
    is called \emph{definitizable} if $\rho(A)\neq \emptyset$ and $[q(A)x,x] \geq 0$ for all $x \in \mc K$ and 
    some rational
    $q\in \bb C_{\rho(A)}(z)$. Any rational $q\in \bb C_{\rho(A)}(z)$ satisfying this condition is called 
    \emph{definitizing rational function} for $A$. 
\end{definition}

\begin{example}\thlab{pontrunit}
  If $(\mc P,[.,.])$ is a Pontryagin space and $U: \mc P \to \mc P$ is a unitary bounded linear operator,
  then $U$ is definitizable. To see this recall for example from \cite{LaSa1990} that $[p(U)x,p(U)x] \geq 0$
  for some polynomial $p\in \bb C[z]$. Thus, we have
  $[p(U)^+p(U)x,x]=[p(U)x,p(U)x] \geq 0$ for all $x\in\mc P$.
  As $p(U)^+p(U) = p^\#(U^+) p(U) = p^\#(U^{-1}) p(U) = q(U)$, where
  $q(z):=p^\#(\frac{1}{z}) p(z)$ is a rational functions with poles at most in
  $\{0,\infty\} \subseteq \rho(U)$. Thus, $q\in \bb C_{\rho(U)}(z)$.
\end{example}
  
\begin{example}\thlab{pontrsa}
    If $A$ is a self-adjoint linear relation on the Pontryagin space $(\mc P,[.,.])$ with $\rho(A)\neq\emptyset$, then
    taking $\mu \in \rho(A) = \overline{\rho(A)}$ with strictly positive imaginary part, we know
    from \cite{dijksmadesnoo1987a} that the Cayley transform $\mc C_\mu(A)$ is unitary. For 
    $M=\smm[ 1 & - \mu \\ 1 & -\bar \mu ]$ we have $\mc C_\mu(A)=\tau_M(A)$ and 
    from \thref{spectrtrans} we obtain that $\phi_M(\mu)=0$ and $\phi_M(\bar \mu)=\infty$ belong to
    $\rho(\mc C_\mu(A))$. As we saw above, $q(\mc C_\mu(A))$ is positive for some
    $q\in \bb C_{\rho(\mc C_\mu(A))}(z)$. Since $q\circ \phi_M(A) = q(\mc C_\mu(A))$ with
    $q\circ \phi_M \in \bb C_{\rho(A)}(z)$, $A$ turns out to be definitizable.  
\end{example}

According to the following lemma, a definitizable linear relation 
gives rise to the situation discussed in the previous sections.

\begin{lemma}\thlab{ht20g95}
    Let $A$ be a definitizable linear relation on a Krein space $\mc K$ with a
    definitizing rational function $q\in \bb C_{\rho(A)}(z)$. Then there exists
    an, up to isomorphisms, unique Hilbert space $\mc V$ and an injective and bounded linear mapping $T: \mc V \to \mc K$
    such that $TT^+ = q(A)$.
\end{lemma}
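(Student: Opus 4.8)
The plan is to carry out the GNS/Kolmogorov-type construction already announced in the introduction. First I would record that $q(A)\in B(\mc K)$ by the rational functional calculus (\thref{ratcalcul}) and that definitizability makes $[q(A)x,x]\geq 0$ real for every $x$, so that $q(A)$ is Krein-self-adjoint and
\[
  \langle x,y\rangle := [q(A)x,y]
\]
is a positive semidefinite Hermitian sesquilinear form on $\mc K$ (Hermiticity follows either from self-adjointness of $q(A)$ or directly from reality of the diagonal via polarization). Its radical is exactly $\mc N:=\{y\in\mc K:[q(A)y,y]=0\}$, since the Cauchy--Schwarz inequality for semi-inner products identifies the isotropic vectors with the radical, so $\langle\cdot,\cdot\rangle$ descends to a genuine inner product on $\mc K/\mc N$. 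I let $\mc V$ be the Hilbert space completion of $(\mc K/\mc N,\langle\cdot,\cdot\rangle)$ and $S\colon\mc K\to\mc V$, $Sx:=x+\mc N$, the canonical map. Fixing a Hilbert inner product with $[.,.]=(G.,.)$, boundedness of $S$ is immediate from $\|Sx\|_{\mc V}^2=[q(A)x,x]=(Gq(A)x,x)\leq\|Gq(A)\|\,\|x\|^2$.

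Next I would define $T:=S^+\colon\mc V\to\mc K$ as the Krein-space adjoint of $S$. Since $(S^+)^+=S$ for bounded operators, the announced map $T^+=S$ is recovered, and $T$ is bounded. The decisive identity is then a one-line computation: for $x,y\in\mc K$,
\[
  [TT^+x,y]=[S^+Sx,y]=\langle Sx,Sy\rangle=[q(A)x,y],
\]
and nondegeneracy of $[.,.]$ upgrades this to $TT^+=q(A)$. Injectivity of $T$ follows because $\ker T=\ker S^+=(\ran S)^{\perp}$ in the Hilbert space $\mc V$, while $\ran S=\mc K/\mc N$ is dense in $\mc V$ by construction; hence $\ker T=\{0\}$. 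This settles existence.

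For uniqueness up to isomorphism, suppose $(\mc V_1,T_1)$ and $(\mc V_2,T_2)$ both satisfy the conclusion. Injectivity of $T_i$ forces $\overline{\ran T_i^+}=(\ker T_i)^{\perp}=\mc V_i$, so the vectors $T_i^+x$ are dense. The assignment $T_1^+x\mapsto T_2^+x$ is well defined and isometric because
\[
  \langle T_1^+x,T_1^+y\rangle=[T_1T_1^+x,y]=[q(A)x,y]=[T_2T_2^+x,y]=\langle T_2^+x,T_2^+y\rangle,
\]
so it extends to a unitary $U\colon\mc V_1\to\mc V_2$ with $UT_1^+=T_2^+$; taking adjoints yields $T_1=T_2U$, the required identification.

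I do not expect a genuine obstacle here, as the construction is standard; the only real care is bookkeeping with the Krein-space adjoint rather than a Hilbert-space adjoint. Specifically, one must make sure that $T^+=S$ (so that the map defined in the introduction is literally the adjoint of $T$), that the passage from $[TT^+x,y]=[q(A)x,y]$ to $TT^+=q(A)$ legitimately invokes nondegeneracy of the Krein inner product, and that the density $\overline{\ran T^+}=\mc V$ (equivalently $\ker T=\{0\}$) is the single fact on which both injectivity and the uniqueness argument hinge.
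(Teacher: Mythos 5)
Your proposal is correct and follows essentially the same route as the paper: completion of $\mc K$ modulo the radical of $[q(A)\,\cdot,\cdot]$, taking $T$ as the Krein adjoint of the canonical quotient map, verifying $TT^+=q(A)$ by the same one-line computation, and obtaining uniqueness from the isometry $T_1^+x\mapsto T_2^+x$ (which the paper phrases as the closure of an isometric linear relation). The only differences are notational and the slightly more explicit bookkeeping you flag, so there is nothing to add.
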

\begin{proof}
$\langle .,. \rangle:=[ q(A) .,. ]$ defines a positive semidefinite hermitian 
sesquilinear form on $\mc K$. By $(\mc V,\langle .,. \rangle)$ we denote the Hilbert space completion 
of $(\mc K/\mc K^{\langle \circ \rangle}, \langle .,. \rangle)$, and let 
$\iota: \mc K \to \mc V$ be defined by $\iota(x)= x + \mc K^{\langle \circ \rangle}$ for $x\in\mc K$.
Taking any compatible Hilbert space scalar product $(.,.)$ on $\mc K$ and denoting
the corresponding Gram operator by $G\in B(\mc K)$ we have
\[
    \langle \iota x,\iota x \rangle = [ q(A) x,x ] \leq \|G\| \, \|q(A)\| \ (x,x) \ \ \text{ for all } \ \ x \in \mc K \,.
\]
Hence, $\iota$ is bounded, and its adjoint $T:=\iota^+$ is a bounded linear operator
from $\mc V$ into $\mc K$.
By definition, the range of $\iota=T^+$ is dense, and therefore, $T$ is injective.
Moreover, due to
\[
	[TT^+x,y] = \langle T^+x,T^+y \rangle = \langle x,y \rangle = [ q(A) x,y ] \ \ \text{ for all } \ \  x,y \in \mc K \,,
\]
we have $TT^+ = q(A)$. Concerning the uniqueness let $\mc H$ be a Hilbert space and let
$S: \mc H \to \mc K$ be an injective and bounded linear mapping
such that $SS^+ = q(A)$. Then $S^+$ has dense range, and $U:= \{ (S^+x;T^+x) : x \in \mc K\}$
is an isometric linear relation between the Hilbert spaces $\mc H$ and $\mc V$. Therefore,
the closure of $U$, also denoted by $U$, constitutes a unitary mapping satisfying
$T^+=US^+$.
\end{proof}

\begin{remark}\thlab{ht20g95rem}
    Since $\ran T^+$ is dense in $\mc V$, in \thref{ht20g95} we have $\mc V = \{0\}$ if and only if $q(A)=0$
\end{remark}

\begin{remark}\thlab{ht20g95rem2}
Next, let us verify that our definitizing relation $A$ is actually in the domain 
of the mapping $\Theta$ defined in \thref{umktheta}:

For any regular matrix $M\in\bb C^{2\times 2}$ with $\tau_M(A) \in B(\mc K)$, 
we know $\tau_M(A) q(A) = q(A) \tau_M(A)$; see \thref{ratcalcul}. 
By \thref{comminter}, this intertwining condition is equivalent to 
$(TT^+\times TT^+) \tau_M(A) \subseteq \tau_M(A)$. As pointed out in \eqref{zzue4}, 
applying $\tau_{M^{-1}}$ gives $(TT^+\times TT^+) A \subseteq A$.

Thus, we can apply \thref{umktheta} to the linear relation $A$. 
In particular  $\Theta(A)^* = \Theta(A^+) = \Theta(A)$ if $A=A^+$. Also note that
$\Theta(q(A)) = \Theta(TT^+) = T^+T$ due to \thref{thetadefeig}.
\end{remark}

\begin{theorem}\thlab{positopspecpost}
    Let $(\mc K,[.,.])$ be a Krein space and let $A$ be a definitizable linear relation on $\mc K$. 
    If $q$ is a definitizing rational function for $A$ and if $s\in \bb C_{\rho(A)}(z)$, then
    for $p(z):=s(z)q(z) \in \bb C_{\rho(A)}(z)$ we have 
    \[
	\sigma(\Theta(A)) \subseteq \sigma(A) \subseteq p^{-1}\Big( p(\sigma(\Theta(A))) \cup \{0\}\Big) \,,	 
    \]
    where $\sigma(\Theta(A))$ has to be interpreted as $\emptyset$ for $q(A)=0$.
    Here, $\Theta$ is the mapping as in \thref{umktheta} applied to the situation of \thref{ht20g95}.
\end{theorem}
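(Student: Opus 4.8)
The statement has three set inclusions to establish, so the plan is to handle them one at a time, drawing on the spectral-mapping results and the properties of $\Theta$ collected earlier.

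First I would prove $\sigma(\Theta(A)) \subseteq \sigma(A)$. By \thref{umktheta} we have $\rho(\Theta(A)) \supseteq \rho(A)$, and passing to complements in $\bb C \cup \{\infty\}$ gives exactly $\sigma(\Theta(A)) \subseteq \sigma(A)$. When $q(A)=0$ we have $\mc V=\{0\}$ by \thref{ht20g95rem}, so $\sigma(\Theta(A))=\emptyset$ and the inclusion is trivial; this is the degenerate case the statement flags, and it should be dispatched at the outset so that the remaining argument may assume $\mc V \neq \{0\}$.

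The substantive inclusion is $\sigma(A) \subseteq p^{-1}\big(p(\sigma(\Theta(A))) \cup \{0\}\big)$. The plan is to argue contrapositively: I would take $\lambda \in \sigma(A)$ and show $p(\lambda) \in p(\sigma(\Theta(A))) \cup \{0\}$, i.e.\ assuming $p(\lambda)\neq 0$ I must produce $\mu\in\sigma(\Theta(A))$ with $p(\mu)=p(\lambda)$. The tool is the spectral mapping theorem \thref{spectransrat} for the rational calculus: applying it to $p\in\bb C_{\rho(A)}(z)$ gives $\sigma(p(A)) = p(\sigma(A))$, so $p(\lambda)\in\sigma(p(A))$. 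Now $p(A) = s(A)q(A) = s(A) TT^+$, and by \thref{umktheta} together with \thref{ht20g95rem2} we have $\Theta(p(A)) = p(\Theta(A))$ with $\sigma(p(\Theta(A))) = p(\sigma(\Theta(A)))$, again by \thref{spectransrat} applied inside $\mc V$. The key linking fact is \thref{thetadefeig}, which gives $\Theta(TT^+)=T^+T$ and the intertwining $T^+ C = \Theta(C)T^+$; via $\Xi$ from \thref{Xidefeig} one has $\Xi\circ\Theta(p(A)) = p(A)\,TT^+$, so the spectra of $p(A)$ and $p(\Theta(A))$ can differ only through the kernel/cokernel governed by $\ker T^+$. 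Concretely, I expect that $\sigma(p(A)) \subseteq \sigma(p(\Theta(A))) \cup \{0\}$, because the operator $p(A)=\Xi(\Theta(p(A)))/$(modulo $TT^+$) behaves on $\ran T$ like $p(\Theta(A))$ under the injective $T$, while on the complementary piece $\ker T^+$ the map $q(A)=TT^+$ vanishes and hence so does $p(A)=s(A)q(A)$, contributing only the eigenvalue $0$.

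The main obstacle will be making precise the comparison of $\sigma(p(A))$ with $\sigma(p(\Theta(A)))$ on the Krein space, since $p(A)$ is a bounded Krein-space operator while $p(\Theta(A))$ lives on the Hilbert space $\mc V$ and they are related only through the non-surjective, non-coisometric $T$. I would control this by exploiting $T^+ p(A) = p(\Theta(A)) T^+$ and $p(A) T = T p(\Theta(A))$ (both from the intertwining in \thref{thetadefeig} and \thref{umktheta}) to transport approximate eigenvectors: a value $p(\lambda)\in\sigma(p(A))$ with $p(\lambda)\neq 0$ cannot be annihilated on the $\ker T^+$ part, so it must persist as a spectral value of $p(\Theta(A))$, giving the required $\mu\in\sigma(\Theta(A))$ with $p(\mu)=p(\lambda)$. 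Once $p(\lambda)\in p(\sigma(\Theta(A)))\cup\{0\}$ is established for every $\lambda\in\sigma(A)$, the inclusion $\sigma(A)\subseteq p^{-1}\big(p(\sigma(\Theta(A)))\cup\{0\}\big)$ follows immediately, completing the chain.
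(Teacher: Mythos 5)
Your reduction of the second inclusion is sound and genuinely different from the paper's route: by the spectral mapping theorem \thref{spectransrat} applied on $\mc K$ and on $\mc V$, together with $\Theta(p(A))=p(\Theta(A))$ from \thref{umktheta}, the claim is equivalent to the operator-level inclusion $\sigma(p(A))\setminus\{0\}\subseteq\sigma(\Theta(p(A)))$. That inclusion is true — but this is exactly where your proof has a genuine gap. You only "expect" it, and the mechanism you offer, transporting approximate eigenvectors through $T^+p(A)=\Theta(p(A))T^+$, cannot establish it in full: the spectrum of a bounded operator on a Krein (or Banach) space is not exhausted by its approximate point spectrum. The operator $p(A)-p(\lambda)$ may be injective with closed range and fail only to be surjective, and for such spectral points there are no approximate eigenvectors to transport. (Your transport does work on the approximate point spectrum: if $\|x_n\|=1$ and $(p(A)-p(\lambda))x_n\to 0$ with $p(\lambda)\neq 0$, then $T^+x_n\not\to 0$, since otherwise $p(A)x_n=s(A)TT^+x_n\to 0$ would force $p(\lambda)=0$; but this covers only part of $\sigma(p(A))$.) Moreover, the heuristic behind your expectation — that $p(A)$ behaves like $\Theta(p(A))$ on $\ran T$ and vanishes on the "complementary piece" $\ker T^+$ — has no literal meaning in a Krein space: $\Cl(\ran T)=(\ker T^+)^{[\bot]}$ may be a degenerate subspace, so $\Cl(\ran T)$ and $\ker T^+$ can intersect nontrivially and need not span $\mc K$; there is no decomposition of $\mc K$ along which to argue.

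The missing step can be supplied more cheaply than by spectral approximation. Put $B:=T^+:\mc K\to\mc V$ and $C:=s(A)T:\mc V\to\mc K$. Then $CB=s(A)TT^+=p(A)$, while $BC=T^+s(A)T=\Theta(s(A))T^+T=\Theta(p(A))$ by the intertwining relation of \thref{thetadefeig} and multiplicativity of $\Theta$. The purely algebraic identity $\sigma(CB)\setminus\{0\}=\sigma(BC)\setminus\{0\}$ (if $R=(CB-\mu)^{-1}$ with $\mu\neq 0$, then $\mu^{-1}(BRC-I)$ inverts $BC-\mu$) gives exactly your key inclusion, after which your argument closes. Alternatively, the non-approximate-point part of the spectrum can be handled dually, by transporting genuine eigenvectors of $p(A)^+$ via $T^+p(A)^+=\Theta(p(A))^*T^+$. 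For comparison, the paper avoids this two-sided spectral comparison altogether: it argues contrapositively on the level of the relation $A$ itself, using \thref{thetaremretour} and \thref{domrangevinv} to get $\ran(A-\lambda)=\mc K$, and transporting genuine eigenvectors through $T^+$ to get $\ker(A-\lambda)=\{0\}$, so it only ever needs point-spectrum and range information rather than the full inclusion $\sigma(p(A))\setminus\{0\}\subseteq\sigma(\Theta(p(A)))$.
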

\begin{proof}
    For $q(A)=0$ and, hence $p(A)=0$, the assertion immediately follows from the 
    Spectral Mapping \thref{spectransrat}. 

    Thus, we can assume that $q(A)\neq 0$.
    The inclusion $\sigma(\Theta(A)) \subseteq \sigma(A)$ was shown in \thref{umktheta}.
    For the second inclusion
    assume $\lambda \not \in p^{-1}\big(p( \sigma(\Theta(A))) \cup \{0\}\big)$. Then    
    $p(\lambda) \not =0$ and $p(\lambda) \not\in p\big( \sigma(\Theta(A)) \big) = \sigma\big(\Theta(p(A))\big)$; 
    see \thref{spectransrat} and \thref{umktheta}.
    Hence, for $M=\bigl(\begin{smallmatrix} 1 & 0 \\ 1 & -p(\lambda) \end{smallmatrix}\bigr)$
    \begin{align}\label{hwr567}\nonumber
    \Theta\big(\tau_M(p(A))\big) &
	= \tau_M\big(\Theta(p(A))\big) = I + p(\lambda) \big(\Theta(p(A)) - p(\lambda)\big)^{-1} \\ & = 
	\Theta(p(A)) \big(\Theta(p(A)) - p(\lambda)\big)^{-1} 
    \end{align}
    is an everywhere defined and bounded linear operator on $\mc V$, whose range is contained in $\ran \Theta(p(A))$.
    According to \thref{thetaremretour}, we then have
    \[
	(T^+\times T^+)^{-1} \Theta\big(\tau_M(p(A))\big) = \tau_M(p(A)) \boxplus (\ker TT^+ \times\ker TT^+) \,.
    \]
    From \eqref{hwr567} and $\Theta(p(A))=\Theta(q(A))\Theta(s(A)) = T^+T \, \Theta(s(A))$ we derive
    \[
	\ran \tau_M(\Theta(p(A))) \subseteq \ran \Theta(p(A)) \subseteq \ran T^+T \subseteq \ran T^+ \,.
    \]
    Therefore, by \thref{domrangevinv} 
    \[
	\dom (T^+\times T^+)^{-1} \Theta\big(\tau_M(p(A))\big) = 
				(T^+)^{-1} \dom \tau_M\big(\Theta(p(A))\big) = \mc K \,,
    \]
    and in turn
    \[
	\mc K = \big(\dom \tau_M(p(A)) \boxplus (\ker TT^+ \times\ker TT^+)\big) = 
			\ran (p(A)-p(\lambda)) + \ker TT^+ \,.
    \]
    $\ker TT^+ = \ker q(A) \subseteq \ker p(A) \subseteq \ran (p(A)-p(\lambda))$ for $p(\lambda) \neq 0$ yields
    $\ran (p(A)-p(\lambda))=\mc K$. From \thref{spectransrat} we conclude $\ran (A-\lambda) = \mc K$.
    
    Finally, take $x \in \ker (A-\lambda) \subseteq \ker (p(A)-p(\lambda))$.
    \thref{thetadefeig} gives
    \[
	(T^+ x; T^+ p(\lambda) x) \in (T^+\times T^+) p(A) \subseteq \Theta(p(A)) \,.
    \]
    In particular,
    $T^+x \in \ker(\Theta(p(A))-p(\lambda))= \{0\}$, i.e. $T^+ x = 0$.
    The calculation
    \[
	p(\lambda) x = p(A) x = s(A) q(A) x = s(A) T T^+ x = 0 \,,
    \]
    shows $x=0$. Thus, we conclude $\lambda \in \rho(A)$.
\end{proof}

If for bounded $A$ the assumptions from the previous theorem are satisfied for $p(z)=q(z)=z$, then 
$\Theta(A)=\Theta(p(A))=\Theta(TT^+) = T^+T$ is self-adjoint in the Hilbert space $\mc V$. 
Hence $\sigma(\Theta(A)) \subseteq \bb R$ and we obtain the following well known result as a corollary; 
see \cite{azio1989}.

\begin{corollary}\thlab{positopspec}
    Let $(\mc K,[.,.])$ be a Krein space and assume that $A: \mc K \to \mc K$ is a bounded, 
    linear operator, such that $[Ax,x] \geq 0$ for all $x\in \mc K$, i.e. $A$ is positive. 
    Then $\sigma(A)\subseteq \bb R$. 
\end{corollary}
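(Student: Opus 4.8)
The plan is to deduce Corollary~\thref{positopspec} directly from Theorem~\thref{positopspecpost} by specializing the data. First I would observe that a bounded positive operator $A$ is automatically definitizable with $\rho(A)\neq\emptyset$: the hypothesis $[Ax,x]\geq 0$ for all $x\in\mc K$ says precisely that the rational function $q(z):=z$ satisfies $[q(A)x,x]\geq 0$, and since $A$ is bounded and everywhere defined we have $\infty\in\rho(A)$, so $q(z)=z$ has its only pole (at $\infty$) in $\rho(A)$; thus $q\in\bb C_{\rho(A)}(z)$ is a definitizing rational function. Hence $A$ fits the framework of Theorem~\thref{positopspecpost}.

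Next I would apply Theorem~\thref{positopspecpost} with the choices $q(z)=z$ and $s(z)=1$, so that $p(z)=s(z)q(z)=z$. This is exactly the degenerate case flagged in the remark immediately preceding the corollary: since $p(z)=z$, the transported relation is $\Theta(A)=\Theta(TT^+)=T^+T$ by Theorem~\thref{thetadefeig} (recall $TT^+=q(A)=A$ from Lemma~\thref{ht20g95}). The operator $T^+T$ is self-adjoint on the Hilbert space $\mc V$, so its spectrum is real, giving $\sigma(\Theta(A))\subseteq\bb R$.

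Finally I would feed this back into the spectral inclusion provided by the theorem, namely
\[
    \sigma(A)\subseteq p^{-1}\Big(p(\sigma(\Theta(A)))\cup\{0\}\Big).
\]
With $p(z)=z$ this reads $\sigma(A)\subseteq \sigma(\Theta(A))\cup\{0\}$, and since $\sigma(\Theta(A))\subseteq\bb R$ and $0\in\bb R$, the right-hand side is contained in $\bb R$. Therefore $\sigma(A)\subseteq\bb R$, as claimed. The only genuine content is the self-adjointness of $T^+T$ (which is routine) together with correctly identifying $\Theta(A)=T^+T$ in this degenerate situation; I do not anticipate a serious obstacle, as the heavy lifting has already been carried out in Theorem~\thref{positopspecpost}. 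One point I would double-check is the edge case $q(A)=0$, i.e.\ $A=0$: then $\mc V=\{0\}$, $\sigma(\Theta(A))$ is interpreted as $\emptyset$, and the inclusion collapses to $\sigma(A)\subseteq\{0\}\subseteq\bb R$, which is consistent.
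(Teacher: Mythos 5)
Your proposal is correct and is essentially the paper's own argument: the remark preceding the corollary applies \thref{positopspecpost} with $p(z)=q(z)=z$, identifies $\Theta(A)=\Theta(TT^+)=T^+T$ as self-adjoint on the Hilbert space $\mc V$, and concludes $\sigma(A)\subseteq\sigma(\Theta(A))\cup\{0\}\subseteq\bb R$. Your additional checks (that $q(z)=z$ is definitizing because $\infty\in\rho(A)$, and the degenerate case $q(A)=0$) are consistent with the paper's conventions.
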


\begin{corollary}\thlab{defspecpro}
    Let $(\mc K,[.,.])$ be a Krein space,  let $A$ be a definitizable linear relation on $\mc K$ 
    and let $q$ be a definitizing rational function for $A$. 
		
    If $A$ is in addition self-adjoint, then
    \[
		\sigma(A) \subseteq \bb R \cup \{\infty\} \cup \{z \in \bb C : q(z)=0\} \,,
    \]
    where
    $\sigma(A) \cap \{z \in \bb C : q(z)=0\}$ is symmetric with respect to $\bb R$.
    Moreover, 
    \[
      \sigma(\Theta(A)) \subseteq \sigma(A) \subseteq \sigma(\Theta(A)) \cup \{z \in \bb C : q(z)=0\} \,.
    \]
    Here, $\Theta$ is the mapping as in \thref{umktheta} applied to the situation of \thref{ht20g95}.
\end{corollary}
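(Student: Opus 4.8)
The plan is to read everything off the Spectral Mapping Theorem \thref{positopspecpost}, which for any $s\in\bb C_{\rho(A)}(z)$ and $p:=sq$ gives $\sigma(\Theta(A))\subseteq\sigma(A)\subseteq p^{-1}\big(p(\sigma(\Theta(A)))\cup\{0\}\big)$. First I would record the free facts. Since $A=A^+$, \thref{ht20g95rem2} says $\Theta(A)$ is self-adjoint on the Hilbert space $\mc V$, whence $\sigma(\Theta(A))\subseteq\bb R\cup\{\infty\}$, and $\sigma(\Theta(A))\subseteq\sigma(A)$ is part of \thref{positopspecpost}. Because $q(A)$ is positive, $q(A)^+=q(A)$, so \thref{ratcalcul} gives $q(A)=q^\#(A)$, i.e. $(q-q^\#)(A)=0$; by the Spectral Mapping \thref{spectransrat} this forces $q(\lambda)=q^\#(\lambda)=\overline{q(\bar\lambda)}$ for every $\lambda\in\sigma(A)$. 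Combined with the symmetry $\sigma(A)=\overline{\sigma(A)}$ of a self-adjoint relation (from $(A-\bar\lambda)^+=A-\lambda$, so $\lambda\in\rho(A)\iff\bar\lambda\in\rho(A)$), this already yields that $\sigma(A)\cap\{z\in\bb C:q(z)=0\}$ is invariant under conjugation.

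The central task is $\sigma(A)\subseteq\sigma(\Theta(A))\cup\{z\in\bb C:q(z)=0\}$. I would fix $\lambda\in\sigma(A)$ with $q(\lambda)\neq0$, assume $\lambda\in\rho(\Theta(A))$, and produce $s$ with $p(\lambda)\neq0$ and $p(\lambda)\notin p(\sigma(\Theta(A)))$, contradicting \thref{positopspecpost}. For non-real $\lambda$ I choose $p$ with real coefficients, namely $p=qq^\#\tilde s$ with $\tilde s=\tilde s^\#\in\bb C_{\rho(A)}(z)$; this is admissible since the poles of $q^\#$ are the conjugates of those of $q$, hence again in $\rho(A)=\overline{\rho(A)}$. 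Then $p(\sigma(\Theta(A)))\subseteq\bb R\cup\{\infty\}$, while $\tilde s$ can be taken as $\tilde s(z)=[(z-\mu)(z-\bar\mu)]^{-1}$ with $\mu\in\rho(A)\setminus\bb R$ ranging over an open set, so that $p(\lambda)=q(\lambda)^2\tilde s(\lambda)$ (using $q^\#(\lambda)=q(\lambda)$) is non-real; it is nonzero because $q(\lambda)\neq0$. Thus $\lambda\in\rho(A)$, so every non-real point of $\sigma(A)$ is a zero of $q$; in particular $\sigma(A)\setminus\bb R$ is finite, which already gives the first displayed inclusion.

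For $\lambda\in(\bb R\cup\{\infty\})\cap\sigma(A)$ with $q(\lambda)\neq0$ I would switch to a peaking estimate. Writing $K:=\sigma(\Theta(A))$, a compact subset of $\bb R\cup\{\infty\}$ on which $q$ is finite and bounded with $\operatorname{dist}(\lambda,K)>0$, it suffices to find $s\in\bb C_{\rho(A)}(z)$ with $|p(\lambda)|>\sup_{K}|p|$, since then $p(\lambda)\neq0$ and $p(\lambda)\notin p(K)$. For finite $\lambda$ take $s(z)=(z-\mu_\epsilon)^{-1}$ with $\mu_\epsilon=\lambda+i\epsilon$; the decisive point is that $\mu_\epsilon\in\rho(A)$ for small $\epsilon$, because $\mu_\epsilon$ is non-real and $q(\mu_\epsilon)\to q(\lambda)\neq0$, so by the previous paragraph $\mu_\epsilon\notin\sigma(A)$. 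Then $|p(\lambda)|=|q(\lambda)|/\epsilon\to\infty$ while $\sup_K|p|\le\sup_K|q|/\operatorname{dist}(\mu_\epsilon,K)$ stays bounded, giving the estimate. For $\lambda=\infty$ (so $\infty\in\rho(\Theta(A))$ and $K$ is bounded) the same idea works with $s(z)=z/(z-iR)$, $iR\in\rho(A)$ for large $R$: here $p(\infty)=q(\infty)\neq0$ is fixed while $\sup_K|p|\to0$. In each case \thref{positopspecpost} forces $\lambda\in\rho(A)$, so $(\bb R\cup\{\infty\})\cap\sigma(A)\setminus\{z:q(z)=0\}\subseteq\sigma(\Theta(A))$; together with the non-real case and $\sigma(\Theta(A))\subseteq\bb R\cup\{\infty\}$ this gives both displayed inclusions.

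The main obstacle is exactly the real (and $\infty$) case: a point of $\sigma(A)$ sitting in a bounded gap of the real set $\sigma(\Theta(A))$ lies in the polynomial hull of $K$ and therefore cannot be separated from $K$ by a polynomial multiple of $q$, so the peaking must use a genuine pole, which is only allowed in $\rho(A)$. This is precisely why the non-real case has to be settled first: it supplies the nearby non-real resolvent points $\lambda+i\epsilon\in\rho(A)$ that serve as poles. The only genuinely quantitative step is then checking the uniform boundedness of $\sup_K|p|$ as the pole approaches $\lambda$, plus the bookkeeping at $\infty$ when $K$ is bounded.
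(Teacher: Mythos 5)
Your proposal is correct, and at its core it runs on the same engine as the paper's own proof: both settle the non-real case first (precisely so that non-real points near a real spectral point $\lambda$ are available as poles in $\rho(A)$), and both then apply \thref{positopspecpost} to functions $p=sq$ whose pole tends to $\lambda$, so that $p(\lambda)$ blows up while $p$ remains bounded on $\sigma(\Theta(A))$, yielding the contradiction. The differences are tactical, and each buys something. For the first inclusion the paper checks that $qs_\mu s_\mu^\#$ is again definitizing and invokes \thref{positopspec} (positive operators on Krein spaces have real spectrum), whereas you use only the Hilbert-space self-adjointness of $\Theta(A)$ together with the reality of $p=qq^\#\tilde s=p^\#$ on $\bb R\cup\{\infty\}$; this makes the whole argument run through \thref{positopspecpost} alone and renders \thref{positopspec} unnecessary. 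Your symmetry argument also differs: the paper deduces symmetry of $\sigma(A)\cap q^{-1}\{0\}$ from the first inclusion, while you obtain the stronger pointwise identity $q=q^\#$ on $\sigma(A)$ from $(q-q^\#)(A)=0$ via \thref{ratcalcul} and \thref{spectransrat} (a fact closely related to \thref{definitzreal}). Your explicit treatment of $\lambda=\infty$ with $s(z)=z/(z-iR)$ is in fact a genuine improvement: the paper's displayed blow-up $(qs_{\mu_n}s_{\mu_n}^\#)(\lambda)=q(\lambda)/|\lambda-\mu_n|^2\to+\infty$ degenerates at $\lambda=\infty$, where the value of that rational function is $0$, so the paper's proof as written needs a M\"obius transform or a peaking function of your kind to cover that point. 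The only step where you are terser than the paper is the claim that $\mu$ can be chosen in an open subset of $\rho(A)\setminus\bb R$ making $q(\lambda)^2\bigl[(\lambda-\mu)(\lambda-\bar\mu)\bigr]^{-1}$ non-real; the paper does this by an explicit imaginary-part computation, but your assertion is easily justified (for fixed $\Im\mu$ the map $\Re\mu\mapsto(\lambda-\mu)(\lambda-\bar\mu)$ traces a parabola, which no line through the origin can contain), so this is a matter of detail, not a gap.
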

\begin{proof}
    By assumption, $q(A)$ is a positive and bounded operator. \thref{positopspec} therefore gives
    $q(\sigma(A))=\sigma(q(A)) \subseteq \bb R$.
    
    For any $\infty\not= \mu\in \rho(A)$ we also have $\bar \mu\in\rho(A)$. Consider 
    $s_\mu(z):=\frac{1}{z-\mu}, \ s_\mu^\#(z)= \frac{1}{z-\bar\mu} \in \bb C_{\rho(A)}(z)$.
    \thref{ratcalcul} together with $A=A^+$ shows that
    \[
	[ (qs_\mu s_\mu^\#)(A) x,x] = [q(C) \ s_\mu(A) x, s_\mu(A) x] \geq 0\ \ \text{ for all} \ \ x \in \mc K \,.
    \]
    Hence we also have $(qs_\mu s_\mu^\#)(\sigma(A)) = \sigma((qs_\mu s_\mu^\#)(A)) \subseteq \bb R$. 
    
    Assume that $z\in \sigma(A)\setminus \bb R$ and $q(z)\neq 0$.
    From $q(z)\in \bb R$ we conclude
\begin{align*}
	\frac{1}{(z-\mu)(z-\bar \mu)} q(z) - & \overline{\frac{1}{(z-\mu)(z-\bar \mu)} q(z)} = \\ & =  
	\frac{q(z)}{|z-\mu|^2 |z-\bar \mu|^2} \left( {\bar z}^2  - z^2 - 2\Re \mu(\bar z - z)\right) \\ &  =
	\frac{2 \ q(z)}{|z-\mu|^2 |z-\bar \mu|^2} (\bar z - z)(\Re z - \Re \mu) \,.
\end{align*}
    For $\Re \mu \neq \Re z$ this term does not vanish, i.e., $(qs_\mu s_\mu^\#)(z) \not\in \bb R$. 
    Since $\rho(A)$ is open, $\Re \mu \neq \Re z$ can always be achieved by perturbing $\Re \mu$ a little, and 
    we obtain a contradiction to $(qs_\mu s_\mu^\#)(\sigma(A)) \subseteq \bb R$.
    
    For $w\in \sigma(A) \cap \{z \in \bb C : q(z)=0\}$ the self-adjointness of $A$ yields
    $\bar w \in \sigma(A) \subseteq \bb R \cup \{\infty\} \cup \{z \in \bb C : q(z)=0\}$.
    Hence $\bar w \in \sigma(A) \cap \{z \in \bb C : q(z)=0\}$.

    Finally, assume that $\lambda \not\in \sigma(\Theta(A)) \cup \{z \in \bb C : q(z)=0\}$ but 
    $\lambda \in \sigma(A)$. Hence $\lambda \in \bb R\cup \{\infty\}$. Let $U(\lambda)$ be a compact,
    and with respect to $\bb R$ symmetric neighbourhood of $\lambda$ such that
    \[
      U(\lambda)\cap (\sigma(\Theta(A)) \cup \{z \in \bb C : q(z)=0\})=\emptyset \,.
    \]
    Since $q\big(\sigma(\Theta(A))\big)=\sigma\big(q(\Theta(A))\big)$ is bounded in $\bb C$, the same is true for
    \[
    \bigcup_{\mu\in U(\lambda)} (qs_\mu s_\mu^\#)(\sigma(\Theta(A))) \subseteq 
    \frac{1}{\sigma(\Theta(A)) - U(\lambda)}\cdot \frac{1}{\sigma(\Theta(A)) - U(\lambda)} \cdot q(\sigma(\Theta(A))) \,.
    \]
    On the other hand, for any sequence $\mu_n\in U(\lambda)\setminus (\bb R\cup \{\infty\}) \subseteq \rho(A), \ n\in\bb N$,
    with $\lim_{n\to\infty} \mu_n=\lambda$ we have
    \[
	(qs_{\mu_n} s_{\mu_n}^\#)(\sigma(A)) \ni (qs_{\mu_n} s_{\mu_n}^\#)(\lambda) = 
	\frac{q(\lambda)}{|\lambda - \mu_n|^2} \to +\infty \,,
    \]
    which contradicts the boundedness of
\begin{align*}
	\bigcup_{n\in\bb N} (qs_{\mu_n} s_{\mu_n}^\#)(\sigma(A)) & \subseteq 
	\bigcup_{n\in\bb N} (qs_{\mu_n} s_{\mu_n}^\#)\big(\sigma(\Theta(A))\big) \cup \{0\}
	\\ & \subseteq \bigcup_{\mu\in U(\lambda)} (qs_\mu s_\mu^\#)\big(\sigma(\Theta(A))\big) \cup \{0\} \,,
\end{align*}
    where the first inclusion follows from \thref{positopspecpost}.
\end{proof}

\begin{remark}\thlab{definitzreal}
    According to \thref{defspecpro} the zeros of $q$, that lie in $\sigma(A)$, are symmetric with respect to $\bb R$.
    If we consider $s:=q+q^\#$, where again $q^\#(z)=\overline{q(\bar z)}$, then 
    $s^\#=s$ and $s(A)=q(A)+q^\#(A)=q(A)+q(A)^{*} = 2 q(A)$. Hence with $q$
    also $s:=q+q^\#$ is a definitizing rational function. The latter is real, i.e.\ $s^\#=s$.
    
    If $\sigma(A)$ is not finite, then by \thref{defspecpro} also $\sigma(A)\cap \bb R$ is not finite.
    From $q(\sigma(A)\cap \bb R)\subseteq q(\sigma(A)) = \sigma(q(A)) \subseteq \bb R$ 
    we conclude that $q$ and $q^\#$ coincide at least on an infinite subset of $\bb C\cup\{\infty\}$. By holomorphy
    they coincide everywhere, i.e.\ $q=q^\#$ for any definitizing rational function $q$ in case that $\sigma(A)$ is not finite.
\end{remark}

\section{Functional Calculus for Self-adjoint Definitizable Linear Relations}

In the present section, we derive a functional calculus for self-adjoint definitizable linear relations
very similar to the functional calculus for self-adjoint operators on Hilbert spaces, which 
assigns to each bounded and Borel measurable
function $\phi$ on the spectrum the operator $\int \phi \, dE$. 

First, let us recall from \cite{dijksmadesnoo1987a} the following
pretty straight forward generalization of the Spectral Theorem for self-adjoint linear relations on Hilbert spaces.

\begin{theorem}\thlab{specsarelhi}
Let $\mc H$ be a Hilbert space and let $A \subseteq \mc H \times \mc H$ be a self-adjoint linear relation.
Then there exists a unique spectral measure $E$ on $\langle \bb R\cup\{\infty\},\mf B(\bb R\cup\{\infty\}), \mc H\rangle$,
such that 
\begin{equation*}
 	(A-z)^{-1}=\int_{\sigma(A)} \frac{1}{t-z} \, dE(t) 
\end{equation*}
for any $z\in\rho(A)$. In particular, we have $r(A)=\int_{\sigma(A)} r(t) \, dE(t)$ for any 
$r\in \bb C_{\rho(A)}(z)$.
\end{theorem}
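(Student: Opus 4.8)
The plan is to reduce the statement to the classical Spectral Theorem for self-adjoint operators on Hilbert spaces (say, in the multiplication-operator or projection-valued-measure form) by passing from the linear relation $A$ to an honest self-adjoint operator via a Cayley-type M\"obius transform. The multivalued part $\mul A = \ker(A-\infty)$ is precisely what forces us to work on the one-point compactification $\bb R\cup\{\infty\}$ rather than on $\bb R$. So first I would split $\mc H$ orthogonally as $\mc H = \overline{\dom A} \oplus \mul A$; on the orthogonal complement $\mul A$ the relation $A$ contributes the eigenvalue $\infty$, and on $\overline{\dom A}$ the relation restricts to a densely defined self-adjoint operator $A_s$ in the usual operator sense. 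The spectral measure of $A_s$ lives on $\bb R$, and we extend it to $\bb R\cup\{\infty\}$ by declaring $E(\{\infty\})$ to be the orthogonal projection onto $\mul A$.

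Next I would make the reduction concrete. Pick any $z_0\in\rho(A)\setminus\{\infty\}$ with $\Im z_0\neq 0$; such a point exists because $\rho(A)$ is open and nonempty and, for self-adjoint $A$, is symmetric about $\bb R$. Form the bounded operator $R:=(A-z_0)^{-1}\in B(\mc H)$ and the transformed relation $\tau_M(A)$ for an appropriate invertible $M$ realizing the M\"obius map $\phi_M(t)=\frac{1}{t-z_0}$; by \thref{spectrtrans} its spectrum is $\phi_M(\sigma(A))$, a compact subset of $\bb C$, and self-adjointness of $A$ translates into the operator $R$ being the resolvent of a self-adjoint operator. Applying the Hilbert-space Spectral Theorem to the relevant bounded self-adjoint (or normal) operator built from $R$ produces a projection-valued measure $\tilde E$ on $\sigma(\tau_M(A))\subseteq\bb C$, which I then pull back along $\phi_M^{-1}$ to obtain a spectral measure $E$ on $\sigma(A)\subseteq\bb R\cup\{\infty\}$. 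The point $\infty\in\sigma(A)$ corresponds under $\phi_M$ to $0$, and $\tilde E(\{0\})$ is exactly the projection onto $\mul A$, matching the construction of $E(\{\infty\})$ above.

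With $E$ in hand, the resolvent formula $(A-z)^{-1}=\int_{\sigma(A)}\frac{1}{t-z}\,dE(t)$ for $z\in\rho(A)$ should follow by translating the functional-calculus identity $\phi_M(A)=\int \phi_M\,dE$ that holds for the single M\"obius function, together with the transformation behaviour of resolvents under $\tau_M$ encoded in \eqref{bspfmoeb}. The final clause, $r(A)=\int_{\sigma(A)} r(t)\,dE(t)$ for all $r\in\bb C_{\rho(A)}(z)$, then comes from the partial-fraction representation \eqref{fracdec} defining $r(A)$ in \thref{ratfufunkdef}: each summand $c_{kj}(A-\alpha_k)^{-j}$ and the polynomial part $p(A)$ integrate termwise against $E$ once we know the resolvents and the action at $\infty$ do. Care is needed here that $p(A)$, which is defined only when $\infty\in\rho(A)$, i.e.\ when $\mul A=\{0\}$ and $A$ is everywhere-defined and bounded, integrates correctly against $E(\{\infty\})=0$.

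The main obstacle, and the place where I expect to spend the most effort, is the careful bookkeeping at $\infty$: ensuring that the spectral measure assigns the projection onto $\mul A$ to $\{\infty\}$ in a way that is simultaneously consistent with the resolvent formula for finite $z$ and with the polynomial part of the rational calculus, and that the pullback of $\tilde E$ under the M\"obius map is independent of the auxiliary choices of $z_0$ and $M$. Uniqueness of $E$ is comparatively routine: two spectral measures with the same resolvent integrals agree on all resolvents, hence on the closed algebra they generate, and by the Stone--Weierstrass theorem on $\bb R\cup\{\infty\}$ (where the functions $t\mapsto\frac{1}{t-z}$ together with constants separate points) they agree on all continuous functions and therefore as measures. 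Since the paper attributes this to \cite{dijksmadesnoo1987a} and calls it a \emph{pretty straight forward generalization}, I would expect the write-up to be brief, invoking the Cayley transform and the scalar Spectral Theorem rather than redeveloping the integration theory.
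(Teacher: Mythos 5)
The paper itself gives no proof of this statement: it is recalled from \cite{dijksmadesnoo1987a} as a known result, so there is no internal argument to compare against. Your sketch --- the orthogonal splitting $\mc H=\overline{\dom A}\oplus\mul A$ with $E(\{\infty\})$ the orthogonal projection onto $\mul A$, reduction to the classical Spectral Theorem via a M\"obius/Cayley transform whose pullback matches that projection at $\infty$, termwise integration of the partial-fraction decomposition for the rational calculus, and uniqueness via Stone--Weierstrass on $\bb R\cup\{\infty\}$ --- is correct and is essentially the standard argument carried out in the cited reference. One small repair: the existence of nonreal points of $\rho(A)$ should not be deduced from ``$\rho(A)$ is open and nonempty'' (nonemptiness is not given a priori in the statement); it follows instead from the standard fact that a self-adjoint relation on a Hilbert space satisfies $\bb C\setminus\bb R\subseteq\rho(A)$ (the usual defect-index/closed-graph argument), after which your construction goes through unchanged.
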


This famous result gives rise to the above mentioned functional calculus assigning to each bounded and Borel measurable
function $\phi$ on the spectrum of $A$ -- recall that $\sigma(A)\subseteq \bb R\cup\{\infty\}$ -- the linear operator 
$\int \phi \, dE$. By the way, if $\phi$ is a characteristic function of a Borel set $B\subseteq \bb R\cup\{\infty\}$, 
then $\int \phi \, dE=E(B)$. Thus, the existence of the functional calculus yields the existence of spectral projections. 

In the situation of a self-adjoint definitizable linear relation $A$ on a Krein space $\mc K$ it turns out that, 
in general, we cannot consider all bounded and Borel measurable functions on $\sigma(A)$. We will have to take 
into account the zeros of the definitizing rational function $q$. According to \thref{definitzreal}, we can and 
will assume that $q$ is a fixed real rational function, i.e.\ $q^\#=q$.

In order to be able to include also the isolated zeros -- in particular all non-real zeros -- 
of the definitizing rational function, we are going to consider functions $\phi$ on the spectrum which have values  
$\phi(w)\in \bb C^{\mf d(w)+1}$, where $\mf d: \sigma(A) \to \bb N_0$ is the function, which assigns to 
$w\in \sigma(A)$ $q$'s degree of zero at $w$.  
Recall that $q$'s degree of zero at $\infty$ is 
$\max(0,\deg b - \deg a)$, where $a$ and $b$ are polynomials such that $q=\frac{a}{b}$. 
Also note that our assumption $q^\#=q$ implies $\mf d(w)=\mf d(\overline{ w})$ for all $w \in \sigma(A)$. 

Let us be more precise. First, we provide $\bb C^m$ with an algebraic structure. 

\begin{definition}\thlab{muldef1}
    For $x=(x_0,\dots,x_{m-1}),y=(y_0,\dots,y_{m-1})\in \bb C^m, \ \lambda\in \bb C$ let $x+y$ and $\lambda x$ be the usual
    componentwise addition and scalar multiplication. Moreover, we set
    \[
	x\cdot y := \big(\sum_{k=0}^j x_k y_{j-k}\big)_{j=0}^{m-1} \,,
    \]
    and $\overline{x}:=(\bar x_0,\dots,\bar x_{m-1})$.
\end{definition}

\begin{remark} \thlab{cminvert}
Note that $\bb C^m$ is a commutative $*$-algebra with multiplicative identity $(1,0,\ldots,0)$. 
Moreover, an element $x\in \bb C^m$ is multiplicatively invertible if and only if $x_0\neq 0$.
\end{remark}

\begin{definition} \thlab{muldef2}
   Let $\mc M(q,A)$ be the set of functions $\phi: \sigma(A) \to \dot \bigcup_{m\in \bb N} \bb C^m$ 
   such that $\phi(w) \in \bb C^{\mf d(w)+1}$. We provide $\mc M(q,A)$ pointwise with scalar multiplication, 
   addition $+$ and multiplication $\cdot$, where the operations on $\bb C^{\mf d(\lambda)+1}$ are
   as in \thref{muldef1}. For $\phi\in \mc M(q,A)$ we define $\phi^\#\in \mc M(q,A)$ by 
   $\phi^\#(\lambda) = \overline{\phi(\bar \lambda)}, \ \lambda\in \sigma(A)$. 
   
   By $\mc M_0(q,A)$ we denote the set of all function 
   $\phi\in \mc M(q,A)$ such that for all $w\in \sigma(A)$ all entries of $\phi(w)$ with the possible
   exception of the last one vanish, or equivalently that  
   $\phi(w) \in \{0\}\times \bb C^{1} \ (\subseteq \bb C^{\mf d(w)+1})$
   for all $w$ with $\mf d(w) > 0$.
   
   If $g: \sigma(A) \to \bb C$, i.e.\ $g\in \bb C^{\sigma(A)}$ and $\phi\in \mc M_0(q,A)$, 
   we define also
   $g\cdot \phi \in \mc M_0(q,A)$ pointwise, where the multiplication in each point is just the scalar
   multiplication.
\end{definition}

With the above introduced operations $\mc M(q,A)$ is a $*$-algebra.
Any function $f: \dom f \to \bb C$ with $\sigma(A)\subseteq \dom f$, 
which is holomorphic locally at $q^{-1}\{0\} \cap \sigma(A)$,
can be considered as an element $f_{q,A}$ of $\mc M(q,A)$ by the following procedure.
    
\begin{definition}\thlab{feinbetef}
We say that a function $f:\dom(f)\to \bb C$ is \emph{locally holomorphic} at a point $z\in \dom(f)$ 
if there is an open set $O\subseteq \bb C \cup \{\infty\}$ with $z\in O \subseteq \dom(f)$ 
such that $f |_O$ is holomorphic.

By $\operatorname{Hol}_{q^{-1}\{0\} \cap \sigma(A) }( \sigma(A))$ we denote the set of all functions $f:\dom(f)\to \bb C$
with $\sigma(A)\subseteq \dom(f)$ which are locally holomorphic at all points $q^{-1}\{0\}\cap \sigma(A)$.

We write $f \sim g$ for two functions $f,g\in \operatorname{Hol}_{q^{-1}\{0\} \cap \sigma(A)}(\sigma(A))$ 
if they coincide on $\sigma(A)$ and define the same germ at all points from $q^{-1}\{0\}\cap \sigma(A)$.

For $f\in \operatorname{Hol}_{q^{-1}\{0\} \cap \sigma(A)}( \sigma(A))$ we set $f_{q,A}(\lambda):=f(\lambda)$ for 
$\lambda \in \sigma(A)\backslash q^{-1}\{0\}$, i.e. $\mf d(w)=0$.
Otherwise, define 
    \begin{equation}\label{einbettdef}
    f_{q,A}(\lambda):=
    \left(\frac{f^{(0)}(\lambda)}{0!},\dots,\frac{f^{(\mf d(\lambda))}(\lambda)}{\mf d(\lambda)!}\right) 
	  \in \bb C^{\mf d(\lambda)+1}\,. 
\end{equation}
In the case that $\lambda =\infty$, all expressions of the form $f^{(j)}(\infty)$ have to be understood as 
$g^{(j)}(0)$ for $g(z):=f\left(\frac{1}{z}\right)$.
\end{definition}

      Obviously, $\sim$ is an equivalence relation and $f\sim g$ implies $f_{q,A}=g_{q,A}$. 
      Hence, it is possible to identify such functions and consider 
      $\operatorname{Hol}_{q^{-1}\{0\} \cap \sigma(A) }( \sigma(A)) /_{\sim}$ together with the 
      well-defined mapping $[f]_\sim \mapsto f_{q,A}$.
      
      Note that $\operatorname{Hol}_{q^{-1}\{0\} \cap \sigma(A)}(\sigma(A)) /_{\sim}$ is an algebra
      by introducing $[f]_\sim + [g]_\sim:=[f+g]_\sim$ and $[f]_\sim \cdot [g]_\sim:=[f\cdot g]_\sim$, 
      where the $f+g$ and $f\cdot g$ are defined on their common domain.

      Since $\sigma(A)$ and $q^{-1}\{0\}$ are symmetric with respect to $\bb R$,
      we can also define the mapping $f\mapsto f^\#$ by $\dom f^\# := \overline{\dom f}$ and $f^\#(z) := \overline{f(\bar z)}$ 
      acting on $\operatorname{Hol}_{q^{-1}\{0\} \cap \sigma(A)}(\sigma(A))$.
      It is easy to check that, $f \sim g$ implies $f^\# \sim g^\#$. Therefore,
      $[f]_\sim \mapsto [f]_\sim^\#:=[f^\#]_\sim$ is well-defined. 
      
      Equipped with these operations $\operatorname{Hol}_{q^{-1}\{0\} \cap \sigma(A)}(\sigma(A)) /_{\sim}$ 
      is a $*$-algebra.

\begin{lemma}\thlab{germeigpost}
The mapping 
\[
	\left\{ \begin{array}{rcl}
	\operatorname{Hol}_{q^{-1}\{0\} \cap \sigma(A)}(\sigma(A)) /_{\sim} & \to & \mc M(q,A) \\
	{[}f{]}_\sim & \mapsto & f_{q,A} \,,
	\end{array} \right.
\]
constitutes a $*$-algebra homomorphism.
\end{lemma}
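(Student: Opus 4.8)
The plan is to verify that the map $[f]_\sim \mapsto f_{q,A}$ respects each of the algebraic operations --- addition, scalar multiplication, the product $\cdot$, and the involution $\#$ --- by checking them componentwise at each point $\lambda \in \sigma(A)$. Since both sides are defined pointwise and the structure on $\bb C^{\mf d(\lambda)+1}$ is that of \thref{muldef1}, the whole statement reduces to local computations at a single spectral point. At points $\lambda \in \sigma(A)\setminus q^{-1}\{0\}$, where $\mf d(\lambda)=0$, the algebra $\bb C^1 = \bb C$ is just the scalars and $f_{q,A}(\lambda)=f(\lambda)$, so compatibility with addition, multiplication and conjugation is immediate. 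The content therefore lies entirely at the zeros $w \in q^{-1}\{0\}\cap\sigma(A)$, where $\mf d(w)>0$.

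At such a point $w$ (first assume $w \neq \infty$), the key observation is that the map sending a germ of a holomorphic function $f$ to the vector of scaled Taylor coefficients
\[
f \mapsto \Big(\tfrac{f^{(0)}(w)}{0!},\dots,\tfrac{f^{(\mf d(w))}(w)}{\mf d(w)!}\Big)
\]
is exactly the truncated-Taylor-series homomorphism onto the jet algebra $\bb C[z]/(z-w)^{\mf d(w)+1}$. Additivity and $\bb C$-linearity are clear since differentiation is linear. For multiplicativity I would invoke the Leibniz rule: the $j$-th Taylor coefficient of $f\cdot g$ at $w$ equals
\[
\frac{(f g)^{(j)}(w)}{j!} = \sum_{k=0}^{j} \frac{f^{(k)}(w)}{k!}\,\frac{g^{(j-k)}(w)}{(j-k)!}\,,
\]
which is precisely the $j$-th component of the convolution product $x\cdot y$ in \thref{muldef1} with $x=f_{q,A}(w)$, $y=g_{q,A}(w)$. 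This is the heart of the argument, and it confirms that truncating the Taylor expansions at order $\mf d(w)$ is compatible with the product because the omitted higher-order terms never feed back into coefficients of index $\le \mf d(w)$. Since the constant function $1$ maps to $(1,0,\dots,0)$, the unit is preserved as well.

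For the involution at $w$, I would compute that $f^\#(z)=\overline{f(\bar z)}$ has $j$-th derivative $(f^\#)^{(j)}(w)=\overline{f^{(j)}(\bar w)}$, so its scaled coefficients at $w$ are $\overline{f^{(j)}(\bar w)}/j!$. Because $q^\#=q$ gives $\mf d(w)=\mf d(\bar w)$, this is exactly $\overline{f_{q,A}(\bar w)}$, which matches the definition of $\phi^\#$ in \thref{muldef2}. The case $w=\infty$ is handled by the substitution $g(z)=f(1/z)$ prescribed in \thref{feinbetef}, reducing everything to an expansion about $0$; since this substitution is itself an algebra isomorphism on germs (composition with the involutive Möbius map $z\mapsto 1/z$), the additive, multiplicative and $\#$-compatibility at $\infty$ follow from the finite case already treated. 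I do not anticipate a genuine obstacle here; the only point requiring care is making sure the well-definedness on $\sim$-classes --- already noted before the lemma statement --- is used so that the computation depends only on germs, and that the Leibniz computation is written for the convolution indexing so it visibly coincides with \thref{muldef1}.
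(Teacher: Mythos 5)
Your proposal is correct and follows essentially the same route as the paper: the heart of both arguments is the Leibniz-rule computation showing that the $j$-th scaled Taylor coefficient of $f\cdot g$ at a zero $w$ equals the convolution product of the coefficient vectors from \thref{muldef1}. The paper in fact only writes out this multiplicativity step and omits the rest as routine, so your additional verifications (linearity, the involution via $\mf d(w)=\mf d(\bar w)$, and the $w=\infty$ case through $z\mapsto 1/z$) simply fill in details the paper leaves to the reader.
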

\begin{proof}
We only show that it preserves multiplication. 

Let $f,g\in \operatorname{Hol}_{q^{-1}\{0\} \cap \sigma(A) }( \sigma(A))$. 
Trivially, we have $(f\cdot g)_{q,A} = f_{q,A}\cdot g_{q,A}$ on $\sigma(A)\backslash q^{-1}\{0\}$. 
The $j$-th entry of $(f\cdot g)_{q,A}(\lambda)$ at a zero $\lambda \in q^{-1}\{0\}$ is, by the general Leibniz rule, 
equal to
\[
	\sum_{k=0}^j \binom{j}{k}  \frac{1}{j!}  f^{(k)}(\lambda)  g^{(j-k)}(\lambda)=\sum_{k=0}^j  \frac{f^{(k)}(\lambda)}{k!}  
	\frac{g^{(j-k)}(\lambda)}{(j-k)!}	
	\,.
\]
This is nothing but the $j$-th entry of the product $f_{q,A}(\lambda)\cdot g_{q,A}(\lambda)$, cf.\ \thref{muldef1}.
\end{proof}

Clearly, for $s\in \bb C_{\rho(A)}(z)$ the assumptions in \thref{feinbetef} are satisfied. Hence,
$s_{q,A}$ is well defined and, as a consequence of \thref{germeigpost},
\[
	\left\{ \begin{array}{rcl}
	\bb C_{\rho(A)}(z) & \to & \mc M(q,A) \,, \\
	s & \mapsto & s_{q,A} \,,
	\end{array} \right.
\]
is a $*$-algebra homomorphism. 

Note also that for all $\lambda\in \sigma(A)$ exactly the last entry of
$q_{q,A}(\lambda) \in \bb C^{\mf d(w)+1}$ does not vanish, because $\lambda$ is a zero of $q$ of degree 
exactly $\mf d(w)$. In particular, $q_{q,A}\in \mc M_0(q,A)$.

\begin{lemma}\thlab{einbett2}
    For any $\phi\in \mc M(q,A)$ there exists an $s\in \bb C_{\rho(A)}(z)$ such that
    $\phi-s_{q,A}\in \mc M_0(q,A)$.
\end{lemma}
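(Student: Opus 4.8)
The plan is to read the condition $\phi - s_{q,A}\in \mc M_0(q,A)$ as a finite Hermite interpolation problem and to solve it by moving a point of $\rho(A)$ to infinity. First I would record exactly what has to be matched. Put $W:=q^{-1}\{0\}\cap \sigma(A)$; every $w\in W$ satisfies $\mf d(w)\geq 1$, while $\mf d(w)=0$ for $w\in\sigma(A)\setminus W$. Since $\mc M_0(q,A)$ consists precisely of those $\psi$ whose value at each $w$ with $\mf d(w)>0$ lies in $\{0\}\times \bb C^1\subseteq \bb C^{\mf d(w)+1}$ (i.e.\ only the last entry may be nonzero), the requirement $\phi-s_{q,A}\in \mc M_0(q,A)$ is equivalent to
\[
    \frac{s^{(j)}(w)}{j!}=\phi_j(w), \qquad w\in W,\ 0\leq j\leq \mf d(w)-1,
\]
where $\phi_j(w)$ is the $j$-th entry of $\phi(w)$; for $w\in\sigma(A)\setminus W$ there is no constraint at all. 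As $q$ is rational, $W$ is finite, so this is a finite Hermite interpolation problem, and the only delicate feature is the pole constraint $s\in \bb C_{\rho(A)}(z)$.

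Next I would choose an auxiliary point. Because $\rho(A)$ is open and non-empty in $\bb C\cup\{\infty\}$, it contains a finite point $\alpha$, and since $W\subseteq\sigma(A)$ we automatically have $\alpha\notin W$ and $\alpha\neq\infty$. Consider the substitution $u=\frac{1}{z-\alpha}$. It is a biholomorphism of $\bb C\cup\{\infty\}$ sending $z=\alpha$ to $u=\infty$, each finite node $w_i\in W\cap\bb C$ to a finite point $u_i=\frac{1}{w_i-\alpha}\neq 0$, and, should $\infty$ belong to $W$, the node $z=\infty$ to $u=0$. A rational function of $z$ whose only pole is at $\alpha$ corresponds under this substitution to a polynomial in $u$; and, being a local biholomorphism near every node, the substitution induces a linear isomorphism on $(\mf d(w)-1)$-jets at each node, the derivative-at-$\infty$ convention of \thref{feinbetef} being matched by additionally composing with $z\mapsto 1/z$.

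I would then solve the transported problem and transport back. In the variable $u$ the task becomes: find a polynomial $\tilde s(u)$ with prescribed jets of order $\mf d(w)-1$ at the finitely many distinct finite points $u_i$ (and at $u=0$ when $\infty\in W$). This is classical Hermite interpolation and is always solvable, for instance by a polynomial of degree less than $\sum_{w\in W}\mf d(w)$. Setting $s(z):=\tilde s\bigl(\tfrac{1}{z-\alpha}\bigr)$ yields a rational function whose only possible pole lies at $z=\alpha\in\rho(A)$ and which is regular at $\infty$, so $s\in \bb C_{\rho(A)}(z)$. By the jet isomorphism from the previous step, this $s$ realises the prescribed jets at every $w\in W$, whence $\phi-s_{q,A}\in \mc M_0(q,A)$.

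The choice of $\alpha$ and the polynomial interpolation are routine. The part I expect to require care is the jet bookkeeping: verifying that composition with $u=\frac{1}{z-\alpha}$ really induces a \emph{bijection} of jet spaces of the correct order at each node — in particular that it is compatible with the convention $f^{(j)}(\infty):=g^{(j)}(0)$ for $g(z)=f(1/z)$ built into \thref{feinbetef} — so that an arbitrary target jet for $s$ at $w$ is attained by suitably prescribing the jet of $\tilde s$ at the corresponding point. Once this invariance of jets under biholomorphic reparametrisation is established, solvability of the original problem follows at once from solvability of the polynomial one.
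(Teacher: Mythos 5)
Your proof is correct, but it establishes solvability by a different mechanism than the paper. Both arguments treat the condition $\phi-s_{q,A}\in \mc M_0(q,A)$ as the same finite Hermite interpolation problem --- matching the first $\mf d(w)$ entries of $\phi(w)$ at each $w\in q^{-1}\{0\}\cap\sigma(A)$ --- and, in fact, with the same family of interpolants: your functions $\tilde s\bigl(\tfrac{1}{z-\alpha}\bigr)$ with $\tilde s$ a polynomial of degree $<m$ are exactly the paper's space $\mc R=\bigl\{p(z)/(z-\mu)^{m-1}:\deg p<m\bigr\}$ with $\mu=\alpha$ (expand the numerator in powers of $z-\alpha$). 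The difference lies in how existence is proved. The paper argues by linear algebra: it shows that the jet-evaluation map $\operatorname{J}$ is injective on $\mc R_{q,A}$ by counting zeros, with a case distinction according to whether $\mf d(\infty)>0$, and then concludes surjectivity from equality of the (finite) dimensions. You instead move a finite point $\alpha\in\rho(A)$ (which exists since $\rho(A)$ is open and non-empty in $\bb C\cup\{\infty\}$) to infinity via $u=\tfrac{1}{z-\alpha}$, so that all nodes --- including a possible node at $\infty$, which lands at $u=0$ --- become finite, and then invoke classical polynomial Hermite interpolation, transporting jets back and forth through the biholomorphism. Your route avoids the paper's case distinction at $\infty$ and makes the structure of $\mc R$ transparent, at the price of the jet-transformation bookkeeping you flag; that step is routine (the chain rule gives a triangular, invertible linear map on jets since $u'(w)\neq 0$, and the paper's convention $f^{(j)}(\infty):=g^{(j)}(0)$ for $g(z)=f(1/z)$ is exactly compatible with your substitution). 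One thing the paper's dimension count delivers that you do not state --- and which is reused later, in \thref{polyrem} and in the proof of \thref{calculstetig} --- is the injectivity of $\operatorname{J}$ on $\mc R_{q,A}$, i.e.\ uniqueness of $s$ within $\mc R$; in your setting this would follow from uniqueness of the Hermite interpolant of degree $<m$, but it would have to be recorded separately.
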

\begin{proof}
Let $\pi_{\mf d(w)}: \bb C^{\mf d(w)+1} \to \bb C^{\mf d(w)}$ denote the projection onto
    the first $\mf d(w)$ entries, and look at the linear map
\[
 \operatorname{J}:\left\{\begin{array}{rcl}
	\mc M(q,A) &\to & \prod_{w\in q^{-1}\{0\} \cap \sigma(A)} \bb C^{\mf d(w)} \,, \\
	\phantom{\Big(}\phi &\mapsto & \big(\pi_{\mf d(w)} \phi(w)\big)_{w\in q^{-1}\{0\} \cap \sigma(A)} \,.
	\end{array} \right.
\]
Clearly, $\ker \operatorname{J} = \mc M_0(q,A)$. For $\mu \in \rho(A)$ and $m:=\sum_{w\in q^{-1}\{0\} \cap \sigma(A)} \mf d(w)$ consider the linear space
\[
	\mc R:=\left\{ \frac{p(z)}{(z-\mu)^{m-1}} : p \in \bb C[z], \ p \ \text{ is of degree } < m \right\}.
\]
We claim that the restriction of $\operatorname{J}$ to $\mc R_{q,A}$ is bijective.

Assume $\operatorname{J}(s_{q,A}) = 0$ for $s(z)=p(z)(z-\mu)^{-(m-1)} \in \mc R$. Then, $s$ has zeros at all 
$w\in q^{-1}\{0\}\cap \sigma(A)$ with multiplicity at least $\mf d(w)$. Consequently, $p$ has zeros at all 
$w\in q^{-1}\{0\}\cap \sigma(A)\cap \bb C$ with multiplicity at least $\mf d(w)$. For $p\neq 0$ this entails 
that the degree of $p$ is greater or equal than $m-\mf d(\infty)$, where for reasons of convenience
we set $\mf d(\infty) = 0$ in the case that $\infty\not\in \sigma(A)$.

If $\mf d(\infty) = 0$, we therefore arrive at a contradiction to the definition of $\mc R$. 
Otherwise, $s$ having a zero at $\infty$ of multiplicity at least $\mf d(\infty)$ 
means that the degree of $p$ is at most $m-1-\mf d(\infty)$. The resulting inequality
\[
	m- \mf d(\infty) \leq \deg p \leq m-1 - \mf d(\infty)
\]
again is a contradiction. Thus, in any case we must have $p= 0$, and in turn $s = 0$.

Since the dimension of both spaces is $m$, this mapping is also bijective. For a given $\phi \in \mc M(q,A)$ there is a unique $s\in \mc R$ with $\operatorname{J}(s_{q,A})=\operatorname{J}(\phi)$. This just means $\phi-s_{q,A} \in \ker \operatorname{J}= \mc M_0(q,A)$.
\end{proof}

    %
    %

\begin{corollary}\thlab{Fform}
    Any function $\phi\in \mc M(q,A)$ admits a decomposition of the form
    \begin{equation}\label{uiwq37as}
	\phi = s_{q,A} + g\cdot q_{q,A} \,,
    \end{equation}
    where $s\in \bb C_{\rho(A)}(z)$ and $g: \sigma(A) \to \bb C$. 
\end{corollary}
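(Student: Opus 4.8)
The plan is to reduce everything to \thref{einbett2} and then solve for $g$ one point at a time. First I would invoke \thref{einbett2} to produce a rational function $s\in \bb C_{\rho(A)}(z)$ for which $\psi := \phi - s_{q,A}$ lies in $\mc M_0(q,A)$. By the definition of $\mc M_0(q,A)$, at every $w\in \sigma(A)$ the vector $\psi(w)\in \bb C^{\mf d(w)+1}$ has all of its entries equal to zero except possibly the last one; write $c_w$ for that last entry.

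Next I would exploit the remark preceding the statement, namely that $q_{q,A}\in \mc M_0(q,A)$ and that at each $\lambda\in \sigma(A)$ precisely the last entry of $q_{q,A}(\lambda)$ is non-zero (because $\lambda$ is a zero of $q$ of order exactly $\mf d(\lambda)$, so the leading Taylor coefficient $q^{(\mf d(\lambda))}(\lambda)/\mf d(\lambda)!$, respectively the value $q(\lambda)$ when $\mf d(\lambda)=0$, does not vanish). Denote this last entry by $d_w$, so that $d_w\neq 0$ for every $w$.

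The key point is that the product $g\cdot q_{q,A}$ for a scalar function $g:\sigma(A)\to \bb C$ is, by the last clause of \thref{muldef2}, taken pointwise as honest scalar multiplication; hence $(g\cdot q_{q,A})(w)$ differs from $q_{q,A}(w)$ only by the scalar factor $g(w)$, and in particular it again has only its last entry possibly non-zero, equal to $g(w)d_w$. Comparing last entries, the desired identity $\psi = g\cdot q_{q,A}$ therefore amounts to the scalar equations $c_w = g(w)d_w$ for each $w\in \sigma(A)$. Since $d_w\neq 0$, I simply set $g(w):=c_w/d_w$; this is an arbitrary complex-valued function on $\sigma(A)$, which is all the statement requires (no regularity on $g$ is claimed). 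With this $g$ one has $g\cdot q_{q,A}=\psi=\phi-s_{q,A}$, i.e.\ $\phi=s_{q,A}+g\cdot q_{q,A}$, as asserted.

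There is essentially no hard step here: the entire content is carried by \thref{einbett2} together with the everywhere non-vanishing of the last coordinate of $q_{q,A}$. The only subtlety to watch is the convention that, for $\phi\in \mc M_0(q,A)$, the symbol $g\cdot\phi$ denotes componentwise scalar multiplication rather than the convolution product of \thref{muldef1}; were it the convolution product, solving for $g$ would not collapse to a single coordinate, and one would instead have to appeal to the invertibility criterion of \thref{cminvert}.
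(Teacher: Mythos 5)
Your proof is correct and is essentially the paper's own argument: both invoke \thref{einbett2} to reduce to $h:=\phi - s_{q,A}\in\mc M_0(q,A)$ and then define $g$ pointwise by dividing the (only possibly non-zero) last entry of $h(w)$ by the non-vanishing last entry $q_{q,A}(w)_{\mf d(w)}$, exactly as you do with $c_w/d_w$. The only cosmetic difference is that the paper splits the definition of $g$ into the cases $\lambda\in q^{-1}\{0\}\cap\sigma(A)$ and $\lambda\in\sigma(A)\setminus q^{-1}\{0\}$, whereas you treat all points uniformly via the last coordinate; your closing caveat about scalar versus convolution multiplication is moot anyway, since convolving with the embedded scalar $(g(w),0,\dots,0)$ reproduces componentwise scalar multiplication.
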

\begin{proof}
    By \thref{einbett2} there exists a rational $s\in \bb C_{\rho(A)}(z)$ such that
    $h:=\phi-s_{q,A} \in \mc M_0(q,A)$.

    We define $g: \sigma(A) \to \bb C$ for
    $\lambda \in \sigma(A) \setminus q^{-1}\{0\}$ by $g(\lambda):=\frac{h(\lambda)}{q(\lambda)}$.
   For $\lambda \in q^{-1}\{0\} \cap \sigma(A)$
    we have 
		\[
			q_{q,A}(\lambda)_{\mf d(\lambda)}=\frac{q^{(\mf d(\lambda))}(\lambda)}{\mf d(\lambda)!}\neq 0,
		\]
		since 
    $\lambda$ is a zero of $q$ of degree exactly ${\mf d(\lambda)}$. Hence we can define
    \[
	g(\lambda):=\frac{h(\lambda)_{\mf d(\lambda)}}{q_{q,A}(\lambda)_{\mf d(\lambda)}} \,.
    \]
    It is then easy to verify that $\phi = s_{q,A} + g\cdot q_{q,A}$.

\end{proof}

This decomposition is by no means unique, since already the choice of the pole $\mu$ of the rational function $s$ was arbitrary. 
In an important special case, there is a canonical decomposition:

\begin{remark}\thlab{polyrem}
    If $A$ is an everywhere defined and bounded operator, i.e.\ $\infty\in \rho(A)$, then
    we  could have taken also $\mc R=\{ p(z) : p \in \bb C[z], \, p \ \text{ is of degree } < m \}$ in the proof
    of \thref{einbett2}. Thus, for any $\phi\in \mc M(q,A)$ there exists a polynomial $p(z)\in \mathbb C[z]$ of degree
    less than $m$ such that
    $\phi-p_{q,A}\in \mc M_0(q,A)$. Since the linear mapping $\operatorname{J}$ constitutes a bijection from 
    $\mc R_{q,A}$ onto $\prod_{w\in q^{-1}\{0\} \cap \sigma(A)} \bb C^{\mf d(w)}$, we even get a unique 
    polynomial $p(z)\in \mathbb C[z]$ of degree less than $m$ such that $\phi-p_{q,A}\in \mc M_0(q,A)$.
    
    Using this for the proof of \thref{Fform} we see that any function $\phi\in \mc M(q,A)$ 
    admits a unique decomposition of the form $\phi = p_{q,A} + g\cdot q_{q,A}$,
    where $p(z)$ is a polynomial of degree
    $<m$ and $g: \sigma(A) \to \bb C$.
\end{remark}

Not all functions from $\mc M(q,A)$ can be integrated and then give rise to an operator.
In fact, just those $\phi\in \mc M(q,A)$ which admit a decomposition as in \thref{Fform} with a 
bounded and Borel measurable function
$g:\sigma(A)\to \bb C$ shall be useful for our purposes.

\begin{definition}\thlab{FdefklM}
    By $\mc F(q,A)$ we denote the set of all $\phi\in \mc M(q,A)$ 
    such that for some decomposition \eqref{uiwq37as} the function $g$ is bounded and Borel measurable, i.e.\
    \[
	\mc F(q,A) := \{ s_{q,A} + g q_{q,A} : s\in \bb C_{\rho(A)}(z), \, g\in \mf B(\sigma(A)) \} \,,
    \]
    where $\mf B(\sigma(A))$ denotes the algebra of all complex valued, 
    bounded and Borel measurable functions on $\sigma(A)$.
\end{definition}

It is elementary to verify that $\mc F(q,A)$ is a $*$-subalgebra of 
$\mc M(q,A)$. Trivially, we have $\bb C_{\rho(A)}(z)_{q,A} \subseteq \mc 
F(q,A)$.

\begin{remark}\thlab{unabhvq}
The following result, \thref{zerlgeig}, contains a more explicit criterion whether a function belongs to $\mc F(q,A)$ 
or not. In particular, \thref{FdefklM} does not depend on the concrete decomposition.

This criterion also shows that $\mc F(q,A)$ only depends on $\sigma(A)$ and the zeros and their multiplicity of $q$.
The same is by definition true for the classes $\mc M(q,A)$ and $\mc M_0(q,A)$.
\end{remark}

\begin{remark} \thlabel{unendlich}
When considering zeros of $q$ in $\sigma(A)$, one possibly has to deal with the point $\infty$, which needs some special treatment. The difference to a common zero of $q$ in 
$\sigma(A)\cap \bb C$ is not fundamental but only affects the notation.

Basically the same statements and proofs formulated for complex zeros also hold true at $\infty$.
One simply has to substitute $(z-\infty)^{-n}:=z^n$ for $z \in \bb C$ and $n \in \bb Z$. 
\end{remark}

\begin{lemma}\thlab{zerlgeig}
    Let $\phi\in \mc M(q,A)$ satisfy $\phi = s_{q,A} + g\cdot q_{q,A}$ with 
    $s\in \bb C_{\rho(A)}(z)$ and $g: \sigma(A) \to \bb C$.
		
    Then, the Borel measurability of $g$ is equivalent to the Borel measurability of 
    $\phi\vert_{\sigma(A)\setminus q^{-1}\{0\}}$.	
    Moreover, $g$ is bounded if and only if both $\phi\vert_{\sigma(A)\setminus q^{-1}\{0\}}$ is bounded and
		\footnote{For $w=\infty$, this reads as 
		$\lambda^{\mf d(\infty)} \left( \phi(\lambda) - \sum_{j=0}^{\mf d(\infty)-1} \phi(\infty)_j 
		\frac{1}{\lambda^j} \right)$; see \thref{unendlich}.} 
    \begin{equation}\label{fn8qw3}
	\frac{1}{(\lambda-w)^{\mf d(w)}}\left(\phi(\lambda) - \sum_{j=0}^{{\mf d(w)-1}} \phi(w)_j (\lambda-w)^j\right), 
	\ \ \lambda \in U(w)\setminus \{w\} \,,
    \end{equation}
    is bounded for some neighbourhood $U(w)$ of $w$ with respect to the relative \mbox{topology} on $\sigma(A)$ 
    for each $w \in q^{-1}\{0\}\cap \sigma(A)$ which is not isolated in $\sigma(A)$, 
    i.e. $w\in \Cl(\sigma(A) \setminus \{ w \} )$.

In this case, \eqref{fn8qw3} is bounded on all neighbourhoods $U(w)$ of $w$ for all non-isolated $w\in q^{-1}\{0\} \cap \sigma(A)$, 
as long as $\Cl(U(w)) \cap q^{-1}\{0\} = \{w \} $.
\end{lemma}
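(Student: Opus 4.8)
The plan is to reduce every assertion to the behaviour of $g$ on $\sigma(A)\setminus q^{-1}\{0\}$, where the defining identity $\phi=s_{q,A}+g\cdot q_{q,A}$ simply reads $\phi(\lambda)=s(\lambda)+g(\lambda)q(\lambda)$, so that $g=(\phi-s)/q$ there. Here $s$ and $q$ are holomorphic on a neighbourhood of the compact set $\sigma(A)\subseteq\bb C\cup\{\infty\}$, their poles lying in $\rho(A)$; hence they are continuous and bounded on $\sigma(A)$, and $q$ vanishes only on the finite set $q^{-1}\{0\}$. For the measurability statement I would note that continuity of $s,q$ together with $q\neq 0$ off $q^{-1}\{0\}$ makes $g|_{\sigma(A)\setminus q^{-1}\{0\}}$ and $\phi|_{\sigma(A)\setminus q^{-1}\{0\}}$ Borel measurable simultaneously, via $g=(\phi-s)/q$ and $\phi=s+gq$. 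Since $q^{-1}\{0\}\cap\sigma(A)$ is finite, every function on it is Borel measurable, so measurability of $g$ on all of $\sigma(A)$ is equivalent to measurability on $\sigma(A)\setminus q^{-1}\{0\}$, which yields the first claim.

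The core of the proof is a local computation at a zero $w$ of $q$. Fixing a neighbourhood $U(w)$ in $\sigma(A)$ with $\Cl(U(w))\cap q^{-1}\{0\}=\{w\}$, every $\lambda\in U(w)\setminus\{w\}$ avoids $q^{-1}\{0\}$, so $\phi(\lambda)$ is the scalar $s(\lambda)+g(\lambda)q(\lambda)$ and \eqref{fn8qw3} equals
\[
\frac{1}{(\lambda-w)^{\mf d(w)}}\Big(s(\lambda)-\sum_{j=0}^{\mf d(w)-1}\phi(w)_j(\lambda-w)^j\Big)+g(\lambda)\frac{q(\lambda)}{(\lambda-w)^{\mf d(w)}}.
\]
From the decomposition and the fact that $q_{q,A}(w)$ has only its last entry nonzero, one reads off $\phi(w)_j=s^{(j)}(w)/j!$ for $j<\mf d(w)$, so the subtracted sum is exactly the Taylor polynomial of $s$ at $w$ of order $\mf d(w)-1$. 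The Taylor expansion of $s$ then turns the first summand into a function $h(\lambda)$ holomorphic at $w$, while $\tilde q(\lambda):=q(\lambda)(\lambda-w)^{-\mf d(w)}$ is holomorphic with $\tilde q(w)=q^{(\mf d(w))}(w)/\mf d(w)!\neq 0$. Thus \eqref{fn8qw3} $=h+g\tilde q$, and since $h$ and $\tilde q$ are bounded near $w$ and $\tilde q$ is bounded away from $0$ there, boundedness of \eqref{fn8qw3} near $w$ is equivalent to boundedness of $g$ near $w$.

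Granting this identity, the boundedness equivalence follows by a compactness argument. If $g$ is bounded, then $\phi=s+gq$ is bounded off $q^{-1}\{0\}$ (as $s,q$ are bounded on $\sigma(A)$), and by the identity \eqref{fn8qw3} is bounded near each non-isolated $w$. Conversely, assuming both hypotheses, I would cover $\sigma(A)$ by the finitely many neighbourhoods $U(w)$ together with $K:=\sigma(A)\setminus\bigcup_w U(w)$: on each $U(w)$ with $w$ non-isolated the identity forces $g$ bounded, at isolated zeros $g$ takes a single finite value, and on the compact set $K$, which is disjoint from $q^{-1}\{0\}$, the quotient $g=(\phi-s)/q$ is bounded because $|q|$ is bounded below and $\phi,s$ are bounded. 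The concluding addendum is then immediate: once $g$ is known to be bounded, $h+g\tilde q$ is bounded on any neighbourhood $U(w)$ with $\Cl(U(w))\cap q^{-1}\{0\}=\{w\}$, since there $h$ and $\tilde q$ extend holomorphically to a neighbourhood of the compact set $\Cl(U(w))$ and are therefore bounded on it.

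The main obstacle is the bookkeeping in the local identity: verifying $\phi(w)_j=s^{(j)}(w)/j!$ for $j<\mf d(w)$ so that the subtracted polynomial is precisely the Taylor polynomial of $s$, and isolating the nonvanishing holomorphic factor $\tilde q$ so that dividing by $(\lambda-w)^{\mf d(w)}$ does not destroy boundedness in either direction. The case $w=\infty$ introduces no new idea and is handled by the substitution $z\mapsto 1/z$ as indicated in \thref{unendlich}.
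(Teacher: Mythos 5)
Your proposal is correct and takes essentially the same route as the paper: your identity $\eqref{fn8qw3} = h + g\,\tilde q$, with $h(\lambda)=\bigl(s(\lambda)-\sum_{j=0}^{\mf d(w)-1}\phi(w)_j(\lambda-w)^j\bigr)(\lambda-w)^{-\mf d(w)}$ pole-free at $w$ (via $j!\,\phi(w)_j=s^{(j)}(w)$ and Taylor expansion) and $\tilde q(\lambda)=q(\lambda)(\lambda-w)^{-\mf d(w)}$ bounded and bounded away from zero, is precisely the paper's key computation, and your covering argument on the complement of the $U(w)$, where $|q|$ is bounded below, matches the paper's conclusion as well.
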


\begin{proof}
    Any $r\in \bb C_{\rho(A)}(z)$ is continuous on the compact set $\sigma(A)$. Hence $r$, and thereby 
    the restriction $r\vert_{\sigma(A)\setminus q^{-1}\{0\}} = r_{q,A}\vert_{\sigma(A)\setminus q^{-1}\{0\}}$ is 
    measurable and bounded.
		
    Since $q$ does not vanish on $\sigma(A)\setminus q^{-1}\{0\}$, it follows from 
		\[
			\phi\vert_{\sigma(A)\setminus q^{-1}\{0\}} = s\vert_{\sigma(A)\setminus q^{-1}\{0\}} +
    g\vert_{\sigma(A)\setminus q^{-1}\{0\}} \ q\vert_{\sigma(A)\setminus q^{-1}\{0\}}
		\]
    that  $\phi\vert_{\sigma(A)\setminus q^{-1}\{0\}}$ is measurable if and only if 
    $g\vert_{\sigma(A)\setminus q^{-1}\{0\}}$ has this property.
    As $q^{-1}\{0\}$ is finite, and hence a Borel set, this is equivalent to the Borel measurability of $g$.

 Concerning the boundedness, consider for a non-isolated zero $w\in q^{-1} \{0\}\cap \sigma(A)$ the rational function 
		\begin{equation} \label{nurrat01}
			 \lambda \mapsto \frac{1}{(\lambda - w)^{\mf d(w)}}\left(s(\lambda) - \sum_{j=0}^{\mf d(w)-1} 
	    \phi(w)_j (\lambda-w)^j\right).
		\end{equation}
		Note that $\phi - s_{q,A}\in \mc M_0(q,A)$, i.e.  $j! \, \phi(w)_j = s^{(j)}(w)$ for $j =0,\dots,\mf d(w)-1$. 
		Taylor expansion of the holomorphic function $s$ around $w$ makes clear that \eqref{nurrat01} has no pole at $\lambda=w$.
		Hence all poles of this rational function lie in $\rho(A)$. 
		In particular, \eqref{nurrat01} is bounded on $\sigma(A)$.

		For any non-isolated $w\in  q^{-1}\{0\} \cap \sigma(A)$ and for $\lambda \in \sigma(A)\backslash q^{-1}\{0\}$ we have
		
		
    \begin{align*}
	\frac{q(\lambda)}{(\lambda - w)^{\mf d(w)}} 	g(\lambda) &= \frac{\phi(\lambda) - s_{q,A}(\lambda)}{(\lambda - w)^{\mf d(w)}} \\     
 &= \frac{1}{(\lambda - w)^{\mf d(w)}} \left( \phi(\lambda) - \sum_{j=0}^{\mf d(w)-1} \phi(w)_j (\lambda-w)^j\right)  \\
& \hspace{5mm} \ \ -\frac{1}{(\lambda - w)^{\mf d(w)}} \left( s(\lambda) - \sum_{j=0}^{\mf d(w)-1} \phi(w)_j (\lambda-w)^j  \right) \,.
	  \end{align*}

		Since $\mf d(w)$ denotes the degree of zero of $q$ at $w$, the expression $\frac{q(\lambda)}{(\lambda-w)^{\mf d(w)}}$ is both bounded 
		and bounded away from zero on any neighbourhood $U(w)$ of $w$ with $\Cl(U(w)) \cap q^{-1}\{0\} = \{w \}$.

    Hence $g$ is bounded on such a neighbourhood $U(w)$ if and only if the expression 
    in \eqref{fn8qw3} is bounded on $U(w)$. Note that this also implies the boundedness of 
    $\phi\vert_{\sigma(A)\setminus q^{-1}\{0\}}$ on $U(w)\setminus \{w\}$.
    
    For each non-isolated $w\in q^{-1} \{0\}\cap \sigma(A)$ we fix an open neighbourhood $U(w)$ of the
    indicated kind, 
    and set $U(w) := \{w\}$ for all isolated $w\in q^{-1}\{0\} \cap \sigma(A)$. Clearly, we have 
    $\phi(\lambda) - s(\lambda) = g(\lambda) q(\lambda)$
    for $\lambda \in \sigma(A)\setminus \bigcup_{w\in q^{-1}\{0\} \cap \sigma(A)} U(w)$. 

    Note that $q$ is not only bounded but also bounded away from zero on $\sigma(A)\setminus \bigcup_{w} U(w)$. 
    Hence $g$ is bounded on $\sigma(A)\setminus \bigcup_{w} U(w)$ if and only if $\phi - s$ is bounded there. 
    Since $s$ is bounded on $\sigma(A)$, this is also equivalent to the fact that $\phi$ is bounded on 
    $\sigma(A)\setminus \bigcup_{w} U(w)$.
\end{proof}

\begin{remark}\thlab{mcscheis}
\begin{sloppypar}
  For $\phi \in \mc M(q,A)$ and a non-isolated zero $w\in q^{-1}\{0\}\cap \sigma(A)$ the fact that 
  \eqref{fn8qw3} is bounded on a neighbourhood $U(w)$ of $w$ implies that $\phi(w)_0 ,\ldots , \phi(w)_{\mf d(w)-1}$ 
  are uniquely determined by the values of $\phi$ on $U(w)\setminus \{w \}$.
\end{sloppypar}

In particular, there is at most one choice of $\phi(w)_0 ,\ldots , \phi(w)_{\mf d(w)-1}$ such that 
$\phi \in \mc F(q,A)$. 
\end{remark}

Next, we give a readable sufficient condition for a function to belong to $\mc F(q,A)$.

\begin{corollary}\thlab{germbddm}
    For any function $f\in \operatorname{Hol}_{q^{-1}\{0\} \cap \sigma(A)}(\sigma(A))$ which is bounded and 
    measurable on $\sigma(A)$, the function  $f_{q,A}$ belongs to $\mc F(q,A)$.
\end{corollary}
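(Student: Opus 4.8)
The plan is to avoid any new construction and simply reduce the claim to the membership criterion of \thref{zerlgeig}. First I would use \thref{Fform} to fix a decomposition $f_{q,A} = s_{q,A} + g\cdot q_{q,A}$ with $s\in\bb C_{\rho(A)}(z)$ and $g:\sigma(A)\to\bb C$. Since the two conditions appearing on the right-hand side of \thref{zerlgeig} are phrased purely in terms of $\phi:=f_{q,A}$ (and not of the particular $g$ or $s$), it suffices to verify these $\phi$-conditions; \thref{zerlgeig} then automatically makes the $g$ coming from our decomposition bounded and Borel measurable, which is exactly what \thref{FdefklM} requires for $f_{q,A}\in\mc F(q,A)$.

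For the measurability and the boundedness of $\phi\vert_{\sigma(A)\setminus q^{-1}\{0\}}$ there is nothing to do: by \thref{feinbetef} we have $f_{q,A}(\lambda)=f(\lambda)$ for every $\lambda\in\sigma(A)\setminus q^{-1}\{0\}$, so this restriction is just the restriction of $f$, which by assumption is bounded and Borel measurable on all of $\sigma(A)$.

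The only substantial point is the local regularity condition at non-isolated zeros. Fix a non-isolated $w\in q^{-1}\{0\}\cap\sigma(A)$. By \eqref{einbettdef} the entries of $f_{q,A}(w)$ are $\phi(w)_j=f^{(j)}(w)/j!$, so the expression \eqref{fn8qw3} reads
\[
	\frac{1}{(\lambda-w)^{\mf d(w)}}\left(f(\lambda) - \sum_{j=0}^{\mf d(w)-1} \frac{f^{(j)}(w)}{j!}(\lambda-w)^j\right).
\]
Because $w\in q^{-1}\{0\}\cap\sigma(A)$, the hypothesis $f\in\operatorname{Hol}_{q^{-1}\{0\}\cap\sigma(A)}(\sigma(A))$ provides an open set $O\ni w$ on which $f$ is holomorphic. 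The bracketed term is then exactly the Taylor remainder of $f$ at $w$ past order $\mf d(w)-1$, hence a holomorphic function on $O$ vanishing to order at least $\mf d(w)$ at $w$. Dividing by $(\lambda-w)^{\mf d(w)}$ therefore gives a function that extends holomorphically across $w$, in particular is continuous there, and so is bounded on a compact neighbourhood $U(w)$ of $w$ contained in $O$. The case $w=\infty$ is handled in the same way after the substitution $g(z):=f(1/z)$ from \thref{feinbetef}, reading \eqref{fn8qw3} as in \thref{unendlich}.

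The hard part is precisely this last estimate: one must convert the local holomorphy assumption into boundedness of \eqref{fn8qw3}, and the mechanism is the matching of the vanishing order $\mf d(w)$ of the Taylor remainder with the power $(\lambda-w)^{\mf d(w)}$ in the denominator. With both conditions of \thref{zerlgeig} verified, $g$ is bounded and Borel measurable, and hence $f_{q,A}=s_{q,A}+g\cdot q_{q,A}\in\mc F(q,A)$.
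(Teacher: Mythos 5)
Your proposal is correct and follows essentially the same route as the paper: both reduce the claim to the criterion of \thref{zerlgeig} and then obtain the boundedness of \eqref{fn8qw3} at each non-isolated $w\in q^{-1}\{0\}\cap\sigma(A)$ from the local holomorphy of $f$, since the bracketed term is the Taylor remainder of $f$ at $w$ vanishing to order at least $\mf d(w)$. Your write-up merely makes explicit two steps the paper leaves implicit, namely invoking \thref{Fform} to produce the decomposition needed before \thref{zerlgeig} applies, and the treatment of $w=\infty$ via $z\mapsto f(1/z)$.
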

\begin{proof}
    According to \thref{zerlgeig} we have to show that for $\phi=f_{q,A}$ the expression \eqref{fn8qw3} is bounded for
    a sufficiently small neighbourhood $U(w)$ of any non-isolated $w\in q^{-1}\{0\} \cap \sigma(A)$.
    But this is granted by the assumption that $f$ is analytic on a neighbourhood of $w$, 
    i.e.\ $f(z) = \sum_{n=0}^\infty \frac{f^{(n)(w)}}{n!} (z-w)^n$ for $|z-w|$ sufficiently small, and 
    by $\phi(w)=f_{q,A}(w) = \big(\frac{f^{(0)}(w)}{0!},\dots,\frac{f^{(\mf d(w))}(w)}{\mf d(w)!}\big)$.
\end{proof}

\begin{remark}\thlab{germeig2}
    If we denote by $\operatorname{Hol}^{bm}_{q^{-1}\{0\} \cap \sigma(A)}(\sigma(A))$ the set of all functions 
    $f\in \operatorname{Hol}_{q^{-1}\{0\} \cap \sigma(A)}(\sigma(A))$ which are bounded and 
    measurable on $\sigma(A)$, then $\operatorname{Hol}^{bm}_{q^{-1}\{0\} \cap \sigma(A)}(\sigma(A)) /_{\sim}$
    is a $*$-subalgebra of $\operatorname{Hol}_{q^{-1}\{0\} \cap \sigma(A)}(\sigma(A)) /_{\sim}$; see 
    \thref{feinbetef} and the considerations after.
    
    Moreover, as a restriction of the $*$-homomorphism in \thref{germeigpost}, the mapping
    $[f]_\sim\mapsto f_{q,A}$ from $\operatorname{Hol}^{bm}_{q^{-1}\{0\} \cap \sigma(A)}(\sigma(A)) /_{\sim}$
    to $\mc F(q,A)$ is a $*$-homomorphism.
\end{remark}

\begin{lemma}\thlab{calculwohldef}
Let $\mf B(\sigma(A))$ denotes the algebra of all complex valued, 
bounded and measurable functions on $\sigma(A)$. We endow the space
$\bb C_{\rho(A)}(z) \times \mf B(\sigma(A))$ with componentwise addition and scalar multiplication, and with 
\[
	(s_1,g_1)\cdot (s_2,g_2):=(s_1 s_2,s_1g_2 + s_2g_1 + q g_1 g_2 ).
\]
Equipped with $(s,g)^\#:=(s^\#,g^\#)$, this space becomes a $*$-algebra.
The mapping 
\[
	\omega : \left\{ \begin{array}{rcl}
	\bb C_{\rho(A)}(z) \times \mf B(\sigma(A)) & \to & \mc M(q,A) \,, \\
	(s,g) & \mapsto & s_{q,A} + g\cdot q_{q,A} \,,
	\end{array} \right.
\]
constitutes a $*$-algebra homomorphism with $\ran \omega = \mc F(q,A)$.
Moreover, we have 
\[
    (s,g) \in \ker \omega \ \ \text{ if and only if } \ \ 
    g=-\big( \frac{s}{q} \big)\big\vert_{\sigma(A)} \,.
\]
\end{lemma}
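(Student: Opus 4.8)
The plan is to read off nearly everything from the fact, noted right after \thref{germeigpost}, that $s \mapsto s_{q,A}$ is a $*$-algebra homomorphism of $\bb C_{\rho(A)}(z)$ into $\mc M(q,A)$, together with the module structure of $\mc M_0(q,A)$ over the scalar functions. Since $\omega(s,g) = s_{q,A} + g\cdot q_{q,A}$ is linear in $(s,g)$ (both $s\mapsto s_{q,A}$ and $g\mapsto g\cdot q_{q,A}$ being linear), additivity and $\bb C$-linearity of $\omega$ are immediate, and $\ran\omega = \mc F(q,A)$ holds by the very definition in \thref{FdefklM}. Thus the substance is threefold: (a) $\omega$ is multiplicative and $^\#$-preserving; (b) the domain is a $*$-algebra; (c) the description of $\ker\omega$.

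For (a) I would first establish two pointwise identities in $\mc M(q,A)$. Writing $s|_{\sigma(A)}g$ and $q|_{\sigma(A)}g$ for the scalar products in $\mf B(\sigma(A))$, I claim $s_{q,A}\cdot (g\cdot q_{q,A}) = (s|_{\sigma(A)}g)\cdot q_{q,A}$ and $(g_1\cdot q_{q,A})\cdot(g_2\cdot q_{q,A}) = (q|_{\sigma(A)}g_1g_2)\cdot q_{q,A}$. Off the zero set both reduce to scalar identities, since there $s_{q,A},q_{q,A}$ are just $s,q$. At a zero $w$ of order $\mf d(w)=d\ge 1$ the convolution product of \thref{muldef1} does the work: an element of $\mc M_0(q,A)$ is supported only in its top coordinate $d$, so multiplying it by $s_{q,A}(w)$ scales that coordinate by the zeroth entry $s(w)$, which gives the first identity; and the product of two such vectors is supported in coordinate $2d>d$, hence vanishes in $\bb C^{d+1}$, matching $(q^2)_{q,A}(w)=0$ (as $q^2$ vanishes to order $2d$) and therefore $(q^2)_{q,A}=q|_{\sigma(A)}\cdot q_{q,A}$, which gives the second identity. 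Feeding these into the expansion of $\omega(s_1,g_1)\,\omega(s_2,g_2)$ and using $(s_1s_2)_{q,A}=(s_1)_{q,A}(s_2)_{q,A}$ reproduces exactly $\omega\big(s_1s_2,\;s_1g_2+s_2g_1+qg_1g_2\big)$, i.e. $\omega$ is multiplicative; the identity $\omega((s,g)^\#)=\omega(s,g)^\#$ follows from $(s_{q,A})^\#=(s^\#)_{q,A}$, from $(g\cdot q_{q,A})^\#=g^\#\cdot q_{q,A}$ (computed coordinatewise, using $\sigma(A)=\overline{\sigma(A)}$), and crucially from the standing assumption $q^\#=q$.

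For (b), rather than checking associativity and the involution axioms by hand, I would exploit the homomorphism property just proved. The first projection $\pi_1\colon(s,g)\mapsto s$ is visibly multiplicative and $*$-linear into the genuine $*$-algebra $\bb C_{\rho(A)}(z)$, so the map $(\pi_1,\omega)$ into the product $*$-algebra $\bb C_{\rho(A)}(z)\times\mc M(q,A)$ intertwines the proposed operations on the domain with the componentwise operations on the target. This map is injective: if $s=s'$ and $g\cdot q_{q,A}=g'\cdot q_{q,A}$, then comparing values off $q^{-1}\{0\}$ gives $g=g'$ there, while at each zero $w$ the top coordinate $g(w)\,q_{q,A}(w)_{\mf d(w)}$ with $q_{q,A}(w)_{\mf d(w)}\neq 0$ forces $g(w)=g'(w)$. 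Its image is closed under the target operations precisely because $(\pi_1,\omega)$ preserves them, so the domain, carried isomorphically onto a $*$-subalgebra, inherits all the $*$-algebra axioms.

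For (c), I would solve $s_{q,A}+g\cdot q_{q,A}=0$ pointwise. On $\sigma(A)\setminus q^{-1}\{0\}$ this reads $s(\lambda)+g(\lambda)q(\lambda)=0$, i.e. $g(\lambda)=-\frac{s(\lambda)}{q(\lambda)}$. At a zero $w$ of order $d=\mf d(w)$ the entries $0,\dots,d-1$ of $g\cdot q_{q,A}$ vanish, so the equation forces $s^{(j)}(w)=0$ for $j<d$; thus $s$ vanishes to order at least $d$ at $w$ and $\frac{s}{q}$ is finite there. The remaining top coordinate gives $\frac{s^{(d)}(w)}{d!}+g(w)\frac{q^{(d)}(w)}{d!}=0$, whence by Taylor expansion $g(w)=-\frac{s^{(d)}(w)}{q^{(d)}(w)}=-\big(\frac{s}{q}\big)(w)$. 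Together this says exactly $g=-\big(\frac{s}{q}\big)\big|_{\sigma(A)}$, the vanishing of $s$ at the zeros of $q$ in $\sigma(A)$ guaranteeing that $\frac{s}{q}$ restricts to a continuous, hence bounded measurable, function there. The converse is the same computation read backwards. The point $\infty$ is handled verbatim under the substitutions of \thref{unendlich}, which is the only bookkeeping one must keep an eye on; this order-of-vanishing accounting at the zeros of $q$ is the one genuinely delicate step of the argument.
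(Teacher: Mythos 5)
Your proposal is correct and takes essentially the same route as the paper: the kernel characterization is proved by exactly the same pointwise analysis at each zero $w$ (the vanishing of entries $0,\dots,\mf d(w)-1$ of $g\cdot q_{q,A}$ forces $s^{(j)}(w)=0$ for $j<\mf d(w)$, and the top entry gives $g(w)=-s^{(\mf d(w))}(w)/q^{(\mf d(w))}(w)=\lim_{\lambda\to w}-s(\lambda)/q(\lambda)$), with $\infty$ handled by the substitution of \thref{unendlich}. The remaining content of your write-up---the two module identities behind multiplicativity and the transfer argument along $(\pi_1,\omega)$ for the $*$-algebra axioms---is simply a careful execution of what the paper dismisses as ``elementary to verify,'' not a different method.
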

\begin{proof}
It is elementary to verify that $\omega$ is a $*$-homomorphism. 

In order to proof the last claim, consider $(s,g) \in \ker \omega$, i.e. $s_{q,A} + g\cdot q_{q,A}=0$. 
For $\lambda \in \sigma(A)\setminus q^{-1}\{0\}$ this just means
    $g(\lambda)=-\frac{s(\lambda)}{q(\lambda)}$.
	At a zero $w \in q^{-1}\{0\} \cap \sigma(A)$ due to $q_{q,A}\in \mc M_0(q,A)$ we get 
	\[
		s^{(j)}(w) = 0, \quad j=0,\ldots, \mf d(w) -1 \,,
	\]
    and $s^{(\mf d(w))}(w)=\mf d(w)! \ s_{q,A}(w)_{\mf d(w)}= - g(w) q^{(\mf d(w))}(w)$, 
		which yields
    \[
	g(w)=- \frac{s^{(\mf d(w))}(w)}{q^{(\mf d(w))}(w)} =
	\lim_{\lambda \to w} -\frac{s(\lambda)}{q(\lambda)} \,.
    \]
		Hence, any $w\in q^{-1}\{0\} \cap \sigma(A)$ is a zero of $s$ with the 
		same or higher multiplicity, and $g$ is the rational function $-\frac{s}{q}$ restricted to $\sigma(A)$. 
\end{proof}

\begin{remark}\thlab{bicommrem}
  For a linear relation $A$ we denote by  
  $A'$ the space of all $C\in B(\mc K)$ such that $(C\times C)(A) \subseteq A$.
  By \thref{comminter} this coincides with the usual commutant if $A\in B(\mc K)$.
  In turn, $A''$ denotes the commutant of $A'$.

  Since $(A-z)^{-1} = \tau_M(A)$ for a proper $M\in \bb C^{2\times 2}$, relation \eqref{zzue4} 
  shows that $C \in A'$ to $(C\times C)((A-z)^{-1}) \subseteq (A-z)^{-1}$.
  If $A$ has a non-empty resolvent set, then 
  by \thref{comminter} this inclusion holds if and only if $C (A-z)^{-1} = (A-z)^{-1} C$ 
  for one, and hence for all $z\in \rho(A)$, i.e.\ $A'=((A-z)^{-1})'$.
\end{remark}

\noindent
In the following theorem, we establish our functional calculus.

\begin{theorem}\thlab{zwe4578}
  Let $A$ be a self-adjoint definitizable linear relation on a Krein space $\mc K$, and let $q$ be a 
  real definitizing rational function. 

  Let $\mc V$ be a Hilbert space and $T: \mc V \to \mc K$ 
  be bounded linear and injective such that $TT^+ = q(A)$; see \thref{ht20g95}.
  Moreover, denote by $E$ the spectral measure of the self-adjoint linear relation 
  $\Theta(A) = (T\times T)^{-1}(A)$ on the Hilbert space $\mc V$; see \thref{umktheta} and \thref{ht20g95rem2}.
  
  For any $\phi\in \mc F(q,A)$ let $\phi=s_{q,A} + g\cdot q_{q,A}$ be a decomposition as in \thref{Fform}. Then
  \[
      \phi(A) := s(A) + T \ \int_{\sigma(\Theta(A))} g \, dE \ T^+ \,,
  \]
  is a bounded operator in $A''$ which does not depend on the decomposition of $\phi$. 
  The map $\phi \mapsto \phi(A)$, $\mc F(q,A) \to A''\subseteq B(\mc K)$ constitutes a $*$-homomorphism which 
  extends the rational functional calculus. 
\end{theorem}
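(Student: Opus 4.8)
The plan is to realize $\phi\mapsto\phi(A)$ as a factorization through the surjective $*$-homomorphism $\omega$ of \thref{calculwohldef}. First I would define the auxiliary map
$\Psi:\bb C_{\rho(A)}(z)\times\mf B(\sigma(A))\to B(\mc K)$ by $\Psi(s,g)=s(A)+T\int_{\sigma(\Theta(A))}g\,dE\,T^+$ and prove that $\Psi$ is a $*$-homomorphism. Linearity is immediate. For $\#$-compatibility I would combine $s(A)^+=s^\#(A)$ (from \thref{ratcalcul}, using $A=A^+$) with the fact that $\sigma(\Theta(A))\subseteq\bb R\cup\{\infty\}$, so that $g^\#=\bar g$ there and $(\int g\,dE)^*=\int g^\#\,dE$; this yields $\Psi(s,g)^+=\Psi(s^\#,g^\#)$. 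The substance is multiplicativity against the twisted product $(s_1,g_1)\cdot(s_2,g_2)=(s_1s_2,\,s_1g_2+s_2g_1+qg_1g_2)$. The decisive ingredients are the intertwining identities $s(A)\,T=T\,s(\Theta(A))$ and $T^+s(A)=s(\Theta(A))\,T^+$ (which follow from $T^+C=\Theta(C)T^+$ in \thref{thetadefeig} and $\Theta(r(A))=r(\Theta(A))$ in \thref{umktheta}), together with $T^+T=\Theta(q(A))=q(\Theta(A))$ from \thref{ht20g95rem2}. Expanding $\Psi(s_1,g_1)\Psi(s_2,g_2)$ into four terms and inserting $s_i(\Theta(A))=\int s_i\,dE$ and $q(\Theta(A))=\int q\,dE$ (legitimate since $\rho(A)\subseteq\rho(\Theta(A))$, see \thref{specsarelhi}), the multiplicativity of the spectral integral collapses the two cross terms to $\int s_1g_2\,dE$ and $\int s_2g_1\,dE$ and the middle factor $T^+T$ to $\int qg_1g_2\,dE$, reproducing $\Psi$ of the twisted product exactly.

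Next I would establish $\ker\omega\subseteq\ker\Psi$, which is precisely the claimed independence of the decomposition. By the kernel description in \thref{calculwohldef}, $(s,g)\in\ker\omega$ means $g=-(s/q)\vert_{\sigma(A)}$; crucially, every zero of $q$ lying in $\sigma(A)$ is then a zero of $s$ of at least the same order, so $r:=-s/q$ has all its poles in $\rho(A)$, i.e.\ $r\in\bb C_{\rho(A)}(z)$. Hence $\int_{\sigma(\Theta(A))}g\,dE=\int r\,dE=r(\Theta(A))=\Theta(r(A))$ by \thref{specsarelhi} and \thref{umktheta}, and $T\,\Theta(r(A))\,T^+=\Xi\bigl(\Theta(r(A))\bigr)=r(A)\,TT^+=r(A)q(A)=-s(A)$ by \thref{Xidefeig} and \thref{ratcalcul}. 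Thus $\Psi(s,g)=s(A)-s(A)=0$. Since $\omega$ maps onto $\mc F(q,A)$, the map $\Psi$ descends to a well-defined $*$-homomorphism $\phi\mapsto\phi(A)$ on $\mc F(q,A)$; it extends the rational calculus because $r_{q,A}$ admits the decomposition $s=r$, $g=0$, giving $\phi(A)=r(A)$.

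It remains to place $\phi(A)$ in $A''$, for which I would show any $R\in A'$ commutes with both summands. For $s(A)$ this is \thref{ratcalculcom}. For the integral part, $R\in A'=((A-z)^{-1})'$ (see \thref{bicommrem}) forces $R\in(TT^+)'$; since $\Theta$ is a $*$-homomorphism on $(TT^+)'$ and $R$ commutes with each $(A-z)^{-1}$, applying $\Theta$ gives that $\Theta(R)$ commutes with each $(\Theta(A)-z)^{-1}=\Theta((A-z)^{-1})$, i.e.\ $\Theta(R)\in\Theta(A)'$. As $D:=\int g\,dE$ lies in the bicommutant $\Theta(A)''$, it commutes with $\Theta(R)$, and it also commutes with $T^+T=q(\Theta(A))\in\Theta(A)'$, so $D\in(T^+T)'$. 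Then the identities $\Xi(\Theta(C)D)=C\,\Xi(D)$ and $\Xi(D\,\Theta(C))=\Xi(D)\,C$ of \thref{Xidefeig} with $C:=R$ yield $R\,\Xi(D)=\Xi(\Theta(R)D)=\Xi(D\,\Theta(R))=\Xi(D)\,R$, so $T\int g\,dE\,T^+\in A''$.

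I expect the multiplicativity step to be the main obstacle: it is where the seemingly ad hoc product on $\bb C_{\rho(A)}(z)\times\mf B(\sigma(A))$ must be matched with operator composition, and it simultaneously exploits the intertwining relations for $T$ and $T^+$, the identity $T^+T=q(\Theta(A))$, and the multiplicativity of the bounded Borel calculus for $\Theta(A)$. The supporting subtlety feeding into well-definedness — that $-s/q$ is a genuine member of $\bb C_{\rho(A)}(z)$, so that $\Xi\circ\Theta$ applies — is the point I would state most carefully.
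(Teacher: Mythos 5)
Your proposal is correct and takes essentially the same route as the paper: the paper introduces the very same map $\Psi(s,g)=s(A)+\Xi\bigl(\int g\,dE\bigr)$, proves multiplicativity for the twisted product via the identities of \thref{Xidefeig} (your intertwining relations $s(A)T=Ts(\Theta(A))$, $T^+s(A)=s(\Theta(A))T^+$ and $T^+T=\int q\,dE$ in disguise), establishes $\ker\omega\subseteq\ker\Psi$ by exactly your observation that $g=-\bigl(\tfrac{s}{q}\bigr)\bigl\vert_{\sigma(A)}$ with $\tfrac{s}{q}\in\bb C_{\rho(A)}(z)$, and then factors $\Psi$ through $\omega$. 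The only divergence is the $A''$-membership step, where the paper invokes the last assertion of \thref{Xidefeig} to place $\Xi\bigl(\int g\,dE\bigr)$ in $(TT^+)''\subseteq A''$, while you verify commutation with each $R\in A'$ directly through $\Theta(R)\in\Theta(A)'$ and the $\Xi$-identities --- the same mechanism, rendered in a slightly more explicit (and arguably more watertight) way.
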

\begin{proof}
Denoting again by $\mf B(\sigma(A))$ the algebra of all bounded and measurable functions on $\sigma(A)$,
consider the mapping $\Psi: \bb C_{\rho(A)}(z) \times \mf B(\sigma(A)) \to B(\mc K)$ defined by
\[
    \Psi(s,g) := s(A) + \Xi \Big(\int_{\sigma(\Theta(A))} g \, dE\Big) \,,
\]
where $\Xi$ is as in \thref{Xidefeig}. Note that $\int g \, dE$ belongs to $(T^+T)'$
since $T^+T= \Theta (q(A)) = q(\Theta(A)) = \int q \, dE$.

\noindent
For $(s_1,g_1), (s_2,g_2) \in \bb C_{\rho(A)}(z) \times \mf B(\sigma(A))$ we have (see \thref{calculwohldef})
\begin{multline}\label{htide4}
  \Psi\big( (s_1,g_1)\cdot (s_2,g_2) \big) = \\ s_1(A)s_2(A) + 
    \Xi\Big( \int_{\sigma(\Theta(A))} s_1g_2 + s_2g_1 + q g_1 g_2 \, dE \Big) \,.
\end{multline}
Since $\int_{\sigma(\Theta(A))} r \, dE = r(\Theta(A)) = \Theta(r(A))$ for $r\in \bb C_{\rho(A)}(z)$,
we obtain from \thref{Xidefeig} that \eqref{htide4} coincides with
\[
    \left( s_1(A) + \Xi \Big(\int_{\sigma(\Theta(A))} g_1 \, dE\Big) \right) 
	    \left( s_2(A) + \Xi \Big(\int_{\sigma(\Theta(A))} g_2 \, dE\Big) \right) \,.
\]
It is straight forward to check that $\Psi$ is also linear and satisfies $\Psi(s^\#,g^\#) = \Psi(s,g)^+$.
Hence it is a $*$-homomorphism. 

By the definition of the rational functional calculus it is obvious that $s(A) \in ((A-z)^{-1})'' = A''$, where 
$z\in \rho(A)$; see \thref{bicommrem}. On the other hand, it is a well known property of the 
Spectral Theorem on Hilbert spaces that $\int g \, dE$ 
commutes with any operator from $\big((\Theta(A)-z)^{-1}\big)'$. Since 
$((\Theta(A)-z)^{-1})' \subseteq q(\Theta(A))' = (T^+T)'$, we conclude from \thref{Xidefeig}
that
\[
    \Xi\Big(\int_{\sigma(\Theta(A))} g \, dE\Big) \in (TT^+)'' = q(A)'' \subseteq A'' \,,
\]
and see $\Psi(s,g)\in A''$.
Finally,
recall that for $(s,g) \in \ker \omega$ we have $g=-(\frac{s}{q})\vert_{\sigma(A)}$, due to \thref{calculwohldef}. 
With the help of \thref{umktheta}, \thref{Xidefeig} and \thref{ratcalcul} we see that
\begin{align*}
    \Psi(s,g) &= s(A) - \Xi \Big(\int_{\sigma(\Theta(A))}  \left(\frac{s}{q}\right)\Big\vert_{\sigma(A)} \, dE\Big) = 
    s(A) - \Xi\Big( \,\frac{s}{q} \big( \Theta(A)\big) \Big) \\ & = 
		s(A) - \Xi\Big( \Theta\big( \, \frac{s}{q}(A)\big) \Big) 	= s(A) - TT^+ \frac{s}{q}(A) = s(A) - q(A) \frac{s}{q}(A) = 0 \,,
\end{align*}
i.e.\ $\ker \omega \subseteq \ker \Psi$. Therefore, $\Psi \circ \omega^{-1}: \mc F(q,A) \to A''$ is a well 
defined $*$-homomorphism.
\end{proof}

We equip $\mc F(q,A)$ with a norm and state the continuity of our functional calculus.

\begin{definition}
For all non-isolated zeros $w\in q^{-1}\{0\} \cap \sigma(A)$ let $U(w)\subseteq \sigma(A)$ be a fixed neighbourhood of 
$w$ with respect to $\sigma(A)$'s relative topology such that $\Cl(U(w)) \cap q^{-1}\{0\} = \{w\}$.
We declare a norm on $\mc F(q,A)$ by
\begin{multline*}
		\| \phi \|_{\mc F} := \sup_{\lambda \in \sigma(A)\backslash q^{-1}\{0\} } \vert \phi(\lambda) \vert \,
		+    \sum_{w \in q^{-1}\{0\}\cap \sigma(A)} \| \phi(w)\|_\infty \, + \\ 
		 +  \sum_{\stackrel{w \in q^{-1}\{0\}\cap \sigma(A)}{w \ \text{non-isolated}}} 
			\sup_{\lambda \in U(w)} \frac{1}{| \lambda - w |^{\mf d(w)}} \left\vert \phi(\lambda) 
				  - \sum_{j=0}^{\mf d(w)-1} \phi(w)_j (\lambda -w)^j  \right\vert \,.
\end{multline*}
Here, $\|.\|_\infty$ denotes the maximum norm of a vector in $\bb C^{\mf d(\omega)+1}$.
\end{definition}

\begin{theorem} \thlab{calculstetig}
Let $A$ be a self-adjoint definitizable linear relation on a Krein space $\mc K$. Equip $\mc F(q,A)$ 
with the norm $\|.\|_{\mc F}$, and endow the space of all bounded operators, $B(\mc K)$, with the operator norm.

Then, the functional calculus $\mc F(q,A) \to B(\mc K), \phi \mapsto \phi(A)$, defined in \thref{zwe4578}, is continuous.
\end{theorem}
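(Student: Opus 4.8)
The plan is to exploit that $\phi\mapsto\phi(A)$ is linear (it is a $*$-homomorphism, see \thref{zwe4578}) and to produce a constant $C$ with $\|\phi(A)\|\le C\,\|\phi\|_{\mc F}$ for every $\phi\in\mc F(q,A)$; continuity then follows at once. To do so I fix one convenient decomposition $\phi=s_{q,A}+g\cdot q_{q,A}$. Concretely, I choose $\mu\in\rho(A)$ together with the finite-dimensional space $\mc R$ from the proof of \thref{einbett2}, take the unique $s\in\mc R$ with $\operatorname{J}(s_{q,A})=\operatorname{J}(\phi)$, and let $g$ be the associated bounded measurable function furnished by \thref{Fform}. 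Since $\phi(A)$ is independent of the decomposition by \thref{zwe4578}, it suffices to estimate $\|s(A)\|$ and $\|T\int g\,dE\,T^+\|\le\|T\|\,\|T^+\|\,\|\int_{\sigma(\Theta(A))}g\,dE\|$ separately, the operator norm on $B(\mc K)$ being taken with respect to a fixed compatible scalar product as in \thref{contranspo} (so that $\|T\|$ and $\|T^+\|$ are finite).

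For the first term I use that $s$ lives in the $m$-dimensional space $\mc R$, with $m=\sum_{w}\mf d(w)$, and that $s$ is determined by $\operatorname{J}(\phi)=(\phi(w)_0,\dots,\phi(w)_{\mf d(w)-1})_w$ through the linear isomorphism $\operatorname{J}|_{\mc R_{q,A}}$. Composing its inverse with the linear map $s\mapsto s(A)$, both acting between finite-dimensional spaces and hence automatically bounded, gives $\|s(A)\|\le C_1\,\|\operatorname{J}(\phi)\|\le C_1\sum_{w\in q^{-1}\{0\}\cap\sigma(A)}\|\phi(w)\|_\infty\le C_1\,\|\phi\|_{\mc F}$.

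For the second term I use $\|\int_{\sigma(\Theta(A))}g\,dE\|\le\sup_{\lambda\in\sigma(\Theta(A))}|g(\lambda)|$ and bound this supremum over the three regions into which $\sigma(\Theta(A))\subseteq\sigma(A)$ splits. Away from the zeros of $q$, i.e.\ on $\sigma(A)\setminus\bigcup_w U(w)$, the function $q$ is bounded away from $0$ while $\phi$ and $s$ are bounded (the latter again via $\operatorname{J}(\phi)$), so $g=(\phi-s)/q$ is controlled. On a punctured neighbourhood $U(w)\setminus\{w\}$ of a non-isolated zero $w$ I write $g(\lambda)=\frac{(\lambda-w)^{\mf d(w)}}{q(\lambda)}\cdot\frac{\phi(\lambda)-s(\lambda)}{(\lambda-w)^{\mf d(w)}}$, note that $\frac{(\lambda-w)^{\mf d(w)}}{q(\lambda)}$ is bounded near $w$ since $q$ has a zero of order exactly $\mf d(w)$, and split the second factor exactly as in the proof of \thref{zerlgeig}: the part $\frac{1}{(\lambda-w)^{\mf d(w)}}(\phi(\lambda)-\sum_{j<\mf d(w)}\phi(w)_j(\lambda-w)^j)$ is the expression \eqref{fn8qw3}, dominated by $\|\phi\|_{\mc F}$, whereas the remaining rational part \eqref{nurrat01} has no pole at $w$ and is therefore bounded on $\sigma(A)$, again through $\operatorname{J}(\phi)$. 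Finally, at a point $w\in q^{-1}\{0\}\cap\sigma(\Theta(A))$ itself one has $g(w)=(\phi(w)_{\mf d(w)}-s_{q,A}(w)_{\mf d(w)})/q_{q,A}(w)_{\mf d(w)}$, bounded by $\|\phi(w)\|_\infty$ and the $\operatorname{J}$-data. Collecting the three estimates yields $\sup_{\sigma(\Theta(A))}|g|\le C_2\,\|\phi\|_{\mc F}$, and hence $\|\phi(A)\|\le(C_1+\|T\|\,\|T^+\|\,C_2)\,\|\phi\|_{\mc F}$.

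The main obstacle is the estimate of $g$ near the non-isolated zeros of $q$: there $g$ is a quotient of two quantities both tending to $0$, and only the regularity term built into $\|\cdot\|_{\mc F}$, namely \eqref{fn8qw3}, keeps it bounded. The argument must therefore reproduce the exact cancellation already exploited in \thref{zerlgeig}, rather than bound $\phi$ and $s$ individually. The remaining two regions and the finite-dimensional control of $\|s(A)\|$ are routine.
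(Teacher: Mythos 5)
Your proposal is correct and follows essentially the same route as the paper's proof: fix the decomposition $\phi = s_{q,A} + g\cdot q_{q,A}$ with $s$ in the finite-dimensional space $\mc R$ determined by $\operatorname{J}(\phi)$, control $s$ (hence $s(A)$) through finite-dimensionality of $\mc R$, and bound $\|g\|_\infty$ by the identical three-region case analysis (at the zeros of $q$, near non-isolated zeros via the regularity term \eqref{fn8qw3} plus the pole-free rational expression \eqref{nurrat01}, and away from the zeros where $|q|$ is bounded below). The only cosmetic differences are that the paper packages the argument as continuity of $\Psi\circ\omega\vert_{\mc R\times\mf B(\sigma(A))}^{-1}$ and estimates the Taylor remainder of $s$ through the auxiliary norm $\|s\|_{\mc R}$ (which forces the pole $\mu$ to be non-real), whereas you bound the same rational term directly on the compact set $\sigma(A)$ by linearity in $\operatorname{J}(\phi)$; both yield the same estimate $\|\phi(A)\|\le C\|\phi\|_{\mc F}$.
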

\begin{proof}
We start by making this mapping more explicit. Recall the definition of $\mc R$ in the proof of \thref{einbett2},
where now we assume that the pole $\mu$ of the functions from $\mc R$ is contained in 
$\bb C\setminus (\bb R \cup q^{-1}\{0\})$.
We claim that the restriction of $\omega:  \bb C_{\rho(A)} \times \mf B(\sigma(A)) \to \mc F(q,A)$  to 
$\mc R \times \mf B(\sigma(A))$ is bijective:

Take $(s;g)\in [\mc R \times \mf B(\sigma(A)) ] \cap \ker \omega$. 
\thref{calculwohldef} gives $g=-\big(\frac{s}{q}\big)\big\vert_{\sigma(A)}$, 
where all zeros of $q$ in $\sigma(A)$ need to be zeros of $s$ with the same or higher multiplicity. 
With the notation introduced in the proof of \thref{einbett2}, we have $\operatorname{J}(s_{q,A})=0$. In 
this proof we showed that $\operatorname{J}$ is injective on $\mc R_{q,A}$, which gives $s=0$ and in turn $(s;g)=0$.

Our functional calculus can therefore be written as $\Psi \circ \omega\vert_{\mc R \times \mf B(\sigma(A))}^{-1}$.
The linear map $\Psi: \mc R \times \mf B(\sigma(A)) \to B(\mc K)$ is continuous with respect to the first 
component for any norm on $\mc R$, because $\mc R$ is only finite dimensional. When we endow $\mf B(\sigma(A))$ with the 
supremum norm on $\sigma(A)$, the map $\Psi$ is also continuous with respect to the second component.

We are left to show that $\omega\vert_{\mc R \times \mf B(\sigma(A))}^{-1}: \mc F(q,A) \to \mc R \times \mf B(\sigma(A))$ 
is continuous. For $\phi \in \mc F(q,A)$ we write $\omega\vert_{\mc R \times \mf B(\sigma(A))}^{-1}(\phi) = (s;g)$ with 
$\phi = s_{q,A} + g \cdot q_{q,A}$. Hereby, $s$ is the rational function in $\mc R$ such that 
$\phi-s_{q,A}\in \mc M_0(q,A)$, as described in \thref{einbett2} and \thref{Fform}. In particular, $s$ only 
depends on the values of $\phi(w)$ for $w\in q^{-1}\{0\}\cap \sigma(A)$. 

Consider again the linear map 
$\operatorname{J}: \mc M(q,A) \to \prod_{w\in q^{-1}\{0\} \cap \sigma(A)} \bb C^{\mf d(w)}$, introduced 
in the proof of \thref{einbett2}, which maps  $\phi\in \mc M(q,A)$ to the tuple which contains for all 
$w\in q^{-1}\{0\}\cap \sigma(A)$ the first $\mf d(w)$ entries of $\phi(w)$, 
${\operatorname{J}(\phi)=\big(\pi_{\mf d(w)} \phi(w)\big)_{w\in q^{-1}\{0\} \cap \sigma(A)}}$. 
By definition of the norm on $\mc F(q,A)$, $\operatorname{J}\vert_{\mc F(q,A)}$ is continuous. 

Also, assigning to each vector $x\in \prod_{w\in q^{-1}\{0\} \cap \sigma(A)} \bb C^{\mf d(w)}$ the corresponding rational function $s \in \mc R$ is linear and continuous, since the domain is only finite dimensional.
This shows that $\mc F(q,A)\to \mc R, \, \phi \mapsto s$ is continuous. 
In particular, if we choose on $\mc R$ a specific norm, namely
($l=\max\{ \mf d(w) : w \in q^{-1}\{0\}\cap \sigma(A)\}$)
\[
    \|s\|_{\mc R} := 
	\sum_{k=0}^l (\|s^{(k)}\|_{\infty, \bb R \cup \{\infty\}} + 
	\|t^{(k)}\|_{\infty, \bb R \cup \{\infty\}}) + \sum_{w \in q^{-1}\{0\}\cap \sigma(A)} 
	      |s^{(\mf d(w))}(w)| 
\]	      
where $t(z)=s(\frac{1}{z})$ and $\|.\|_{\infty,\bb R \cup \{\infty\}}$ 
denotes the supremum norm on $\bb R \cup \{\infty\}$, 
we get for a certain constant $C>0$
\begin{equation} \label{contis}
	\|s\|_{\mc R} \leq C \| \phi \|_{\mc F}.
\end{equation}

It is left to show that $\phi \mapsto g$ is continuous, i.e. $\|g \|_\infty \leq D \| \phi \|_{\mc F}$ for some constant $D>0$. 
We distinguish three different cases:

First, we look at the value of $g$ at a zero of $q$. For $w\in  q^{-1}\{ 0\} \cap \sigma(A)$, we have
\begin{align*}
	|g(w)| & = \Bigg\vert  \frac{ \phi(w)_{\mf d(w)} -s_{q,A}( w)_{\mf d(w)} }{ q_{q,A}( w)_{\mf d(w)}  }     \Bigg\vert 
	= \Bigg\vert \frac{ {\mf d(w)!} \phi(w)_{\mf d(w)} - s^{(\mf d(w))}(w)}{q^{(\mf d(w))}(w) }   \Bigg\vert \\ & \leq
	d \Big( \left\vert \phi(w)_{\mf d(w)} \right\vert + \big\vert s^{(\mf d(w))}(w) \big\vert \Big) \leq 
	d \left( \| \phi(w) \|_\infty + \|s\|_{\mc R} \right)  \leq D_1 \|  \phi \|_{\mc F},
\end{align*}
with $d:=\frac{\mf d(w)!}{\vert q^{(\mf d(w))}(w) \vert}$ and a constant $D_1>0$. The last inequality used \eqref{contis}.

Clearly, for any other $\lambda \in \sigma(A) \backslash q^{-1}\{0\}$, we have
\begin{equation} \label{ehklor}
	 g(\lambda)  =    \frac{\phi(\lambda)-s(\lambda)}{q(\lambda)} .
\end{equation}

Secondly, let $w\in \sigma(A)$ be a non-isolated, in particular real, zero of $q$, and consider 
the value of $g$ at those $\lambda\in \sigma(A)$ which are near $w$ , i.e.\ 
$\lambda \in U(w)\setminus\{w\} \subseteq \bb R \cup \{\infty\}$.
The estimate \footnote{Again, $\vert (\lambda - w)^{\mf d(w)} \vert$ has to be interpreted as 
$\vert \lambda^{-\mf d(\infty)} \vert$ if $w=\infty$; see \thref{unendlich}.} 
$\vert q(\lambda) \vert \geq c \vert (\lambda - w)^{\mf d(w)} \vert$, which holds for all $\lambda \in U(w)$, gives
\begin{multline*}
		|g(\lambda)| \leq     \frac{\left\vert\phi(\lambda)-s(\lambda)\right\vert}{c \vert (\lambda - w)^{\mf d(w)} \vert}  
		\leq  \frac{1}{c \vert (\lambda - w)^{\mf d(w)} \vert}    
		\left\vert   \phi(\lambda) - \sum_{j=0}^{\mf d(w)-1} \phi(w)_j (\lambda -w)^j \right\vert + 	\\ 
		\hspace{5mm} + \frac{1}{c \vert (\lambda - w)^{\mf d(w)} \vert}	\left\vert \sum_{j=0}^{\mf d(w)-1} \phi(w)_j (\lambda -w)^j-s(\lambda)  \right\vert.
\end{multline*}
The first summand is bounded by $c^{-1} \| \phi \|_{\mc F}$ by definition. 
Due to $\phi(w)_j=s_{q,A}(w)_j=\tfrac{1}{j!} s^{(j)}(w)$, the second summand turns out to be the remainder of the 
Taylor approximation of $s$, which for $w\neq \infty$ can be estimated by
\[
	\frac{1}{c\vert (\lambda - w)^{\mf d(w)} \vert}	
	\left\vert \sum_{j=0}^{\mf d(w)-1} \phi(w)_j (\lambda -w)^j-s(\lambda)  \right\vert \leq 
  \frac{1}{c \, \mf d(w)!}   \big\| s^{\mf d(w)} \big\|_{\infty,\bb R\cup\{\infty\}} 
\]
and for $w=\infty$ by $(t(z)=s(\frac{1}{z}))$
\[
	\frac{|\lambda^{\mf d(\infty)}|}{c}	
	\left\vert \sum_{j=0}^{\mf d(\infty)-1} \phi(\infty)_j \frac{1}{\lambda^j} - s(\lambda)  \right\vert \leq 
  \frac{1}{c \, \mf d(\infty)!}  \big\| t^{\mf d(\infty)} \big\|_{\infty,\bb R\cup\{\infty\}} 
\]
for all $\lambda \in U(w)\setminus \{w\} \ (\subseteq \bb R \cup \{\infty\})$.
Due to \eqref{contis},  this gives $\vert g(\lambda) \vert \leq D_2 \| \phi \|_{\mc F} $ 
for all $\lambda \in {\bigcup_{w} U(w)\setminus \{w\}}$ and for a constant $D_2>0$.

Finally, consider $\lambda \in \sigma(A) \backslash \left(\bigcup_{w} U(w) \cup q^{-1}\{0\}\right)$. In this case, 
there is a constant $c'>0$ such that the uniform lower estimate $|q(\lambda)|\geq c'$ holds.  
With \eqref{ehklor} and \eqref{contis}, we get for some constant $D_3>0$
\[
	|g(\lambda)| \leq \frac{1}{c'} \big( |\phi(\lambda)| +  |s(\lambda)| \big) \leq D_3 \| \phi \|_{\mc F}.
\]

We set $D:=\max (D_1,D_2,D_3)$, and conclude $\|g \|_\infty \leq D \| \phi \|_{\mc F}$.
\end{proof}

We have some a priori knowledge of the location of the spectrum of $\phi(A)$. Let $\pi_1$ denote the projection 
onto the first component,
\[
	\pi_1: \left\{ \begin{array}{rcl}
	\dot \bigcup_{m\in \bb N} \bb C^m & \to & \bb C \,, \\
	x &\mapsto &x_0 \,.
	\end{array} \right.
\]

\begin{proposition} \label{gehtEinfacher}
For all $\phi \in \mc F(q,A)$, we have
\[
	\sigma(\phi(A)) \subseteq \Cl\big((\pi_1 \circ \phi ) (\sigma(A))\big).
\]
\end{proposition}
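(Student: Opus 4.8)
The plan is to prove the contrapositive: if $z\notin\Cl\big((\pi_1\circ\phi)(\sigma(A))\big)$, then $z\in\rho(\phi(A))$. Since $\phi\mapsto\phi(A)$ is a $*$-homomorphism extending the rational functional calculus (\thref{zwe4578}), it maps the multiplicative identity $\mathbf 1:=1_{q,A}$ of $\mc F(q,A)$ to $I$ and is linear, so $(z\mathbf 1)(A)=zI$. Hence it suffices to show that $\chi:=\phi-z\mathbf 1\in\mc F(q,A)$ is invertible \emph{inside} the algebra $\mc F(q,A)$: an inverse $\psi$ then gives $(\phi(A)-zI)\,\psi(A)=(\chi\psi)(A)=\mathbf 1(A)=I=\psi(A)(\phi(A)-zI)$.

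First I would produce the pointwise inverse. The assumption yields a $\delta>0$ with $|(\pi_1\circ\phi)(\lambda)-z|\geq\delta$ for all $\lambda\in\sigma(A)$. For $\lambda\in\sigma(A)\setminus q^{-1}\{0\}$ this reads $|\chi(\lambda)|=|\phi(\lambda)-z|\geq\delta$, while at a zero $w\in q^{-1}\{0\}\cap\sigma(A)$ it reads $|\chi(w)_0|\geq\delta$, so by \thref{cminvert} the jet $\chi(w)\in\bb C^{\mf d(w)+1}$ is multiplicatively invertible. Thus the pointwise inverse $\psi:=\chi^{-1}\in\mc M(q,A)$ is well defined, with $\psi(\lambda)=1/(\phi(\lambda)-z)$ off the zeros of $q$.

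It remains to verify $\psi\in\mc F(q,A)$ via the criterion of \thref{zerlgeig}. Boundedness of $\psi\vert_{\sigma(A)\setminus q^{-1}\{0\}}$ by $1/\delta$ and its measurability are immediate. The essential point is the regularity condition \eqref{fn8qw3} for $\psi$ at each non-isolated zero $w$; set $d:=\mf d(w)$. Since $\chi\in\mc F(q,A)$, that condition for $\chi$ says $\chi(\lambda)-P(\lambda)=O(|\lambda-w|^{d})$ on $\sigma(A)$ near $w$, where $P(\lambda):=\sum_{j=0}^{d-1}\chi(w)_j(\lambda-w)^j$; as $P(w)=\chi(w)_0\neq 0$, after shrinking the neighbourhood $P$ is bounded away from $0$ and $1/P$ is holomorphic at $w$, with order-$(d-1)$ Taylor polynomial $Q(\lambda):=\sum_{j=0}^{d-1}b_j(\lambda-w)^j$. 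Because multiplication in $\bb C^{d+1}$ is the convolution of \thref{muldef1}, the $j$-th component of a jet inverse depends only on the input components of order $\leq j$; as the Taylor jet of $P$ coincides with $\chi(w)$ in the components of order $\leq d-1$, the first $d$ components of $\psi(w)=\chi(w)^{-1}$ equal $b_0,\dots,b_{d-1}$, i.e.\ $\psi(w)_j=b_j$. Writing
\[
\frac{1}{\chi(\lambda)}-Q(\lambda)=\Big(\frac{1}{\chi(\lambda)}-\frac{1}{P(\lambda)}\Big)+\Big(\frac{1}{P(\lambda)}-Q(\lambda)\Big),
\]
the second bracket is $O(|\lambda-w|^{d})$ as a Taylor remainder of the holomorphic function $1/P$, while the first equals $(P-\chi)/(\chi P)$, which is $O(|\lambda-w|^{d})$ because $P-\chi=O(|\lambda-w|^{d})$ and $|\chi|,|P|$ are bounded below. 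Since $\psi(\lambda)=1/\chi(\lambda)$ and $\sum_{j=0}^{d-1}\psi(w)_j(\lambda-w)^j=Q(\lambda)$, this shows exactly that \eqref{fn8qw3} for $\psi$ is bounded near $w$ (the case $w=\infty$ being handled verbatim after the substitution of \thref{unendlich}). By \thref{zerlgeig} we conclude $\psi\in\mc F(q,A)$.

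The main obstacle is precisely this last regularity verification: one must show that inverting the jet $\chi(w)$ at a non-isolated zero is compatible with inverting $\chi$ as a function near $w$. The device of comparing $1/\chi$ with $1/P$, where $P$ is the truncated polynomial furnished by $\chi$'s own regularity—so that the lower jet components are forced by the nearby values, cf.\ \thref{mcscheis}—is what makes the two notions of inverse agree to the required order; the surrounding estimates are routine.
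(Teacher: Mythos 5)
Your proposal is correct and follows essentially the same route as the paper's proof: both construct the pointwise inverse $\psi$ of $\phi-\mu\mathds{1}_{q,A}$ (using \thref{cminvert} for the jet inverses at zeros of $q$), verify $\psi\in\mc F(q,A)$ through the criterion of \thref{zerlgeig} by splitting the remainder against the intermediate term $1/P$ built from the truncated polynomial, and conclude via the homomorphism property of \thref{zwe4578}. The only cosmetic difference is that you bound the difference between $1/P$ and the truncation of $\psi$ as a Taylor remainder of the holomorphic function $1/P$ (after identifying the jet-inverse components with its Taylor coefficients), whereas the paper performs the equivalent convolution computation directly.
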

\begin{proof}
Fix $\mu\notin \Cl\big((\pi_1 \circ \phi) (\sigma(A))\big) $ and consider the function $\psi\in \mc M(q,A)$ defined as 
\[
	\psi(\lambda):=\frac{1}{{\phi(\lambda)- \mu}} \ \ \text{ for } \ \ \lambda \in \sigma(A)\backslash q^{-1}\{0\}.
\]
We claim that indeed $\psi \in \mc F(q,A)$, when $\psi$ is defined correctly at the non-isolated zeros of $q$. 
If such a choice is possible, these values are uniquely determined, as already mentioned in \thref{mcscheis}.

As we want the identity $\psi(\lambda)\cdot(\phi(\lambda)-\mu \mathds{1}_{q,A}(\lambda))=\mathds{1}_{q,A}(\lambda)$ 
to hold for all $\lambda\in \sigma(A)$, we have to set $\psi(w)$, for a zero $w\in q^{-1}\{0\}\cap \sigma(A)$, 
as the multiplicative inverse of $\phi(w)-\mu \mathds{1}_{q,A}(w)\in \bb C^{\mf d(w)}$. As mentioned in \thref{cminvert}, 
a vector in $\bb C^{\mf d(w)}$ is invertible if and only if its first entry is not zero. 
By the choice of $\mu$, we have $\pi_1 (\phi(w)-\mu \mathds{1}_{q,A}(w)) = \pi_1(\phi(w)) - \mu \neq 0$ and can 
define for $w\in q^{-1}\{0\}\cap \sigma(A)$
\[
	\psi(w):= \left(  \phi(w)-\mu \mathds{1}_{q,A}(w) \right)^{-1}.
\]

We are going to apply \thref{zerlgeig} to the function $\psi$. 
First, we clearly find a neighbourhood of $\mu$ which is disjoint to $(\pi_1 \circ \phi) (\sigma(A))$.
Thus, $\lambda \mapsto  \phi(\lambda)-\mu  $ is bounded away from zero on $\sigma(A)\backslash q^{-1}\{0\}$. In particular, $\psi\vert_{\sigma(A)\setminus q ^{-1}\{0\}}$ is bounded and measurable.

In order to show that expression \eqref{fn8qw3} is bounded, let $w$ be a non-isolated zero of $q$ in $\sigma(A)$, 
and for $\lambda \in \sigma(A)\setminus q^{-1}\{0\}$ consider
\footnote{Again, $ (\lambda - w)^{j} $ has to be interpreted as $\lambda^{-j}$ if $w=\infty$; see \thref{unendlich}.}
\begin{align} \nonumber 
\Bigg\vert  \psi(\lambda) &- \sum_{j=0}^{{\mf d(w)-1}} \psi(w)_j (\lambda-w)^j \Bigg\vert \leq  \\ \label{langewurscht1}
&\leq \left\vert  (\phi(\lambda) - \mu)^{-1}  - \left( \sum_{j=0}^{{\mf d(w)-1}} \phi(w)_j (\lambda-w)^j \, - \mu \right)^{-1} \right\vert +  \\ \label{langewurscht2}
	  &\ + \left\vert \left( \sum_{j=0}^{{\mf d(w)-1}} \phi(w)_j (\lambda-w)^j  \, - \mu  \right)^{-1} -\sum_{j=0}^{{\mf d(w)-1}} \psi(w)_j (\lambda-w)^j \right\vert .
\end{align}

As seen above, we find a positive constant $C$ such that $|\phi(\lambda)-\mu|^{-1}\leq C$ 
for all $\lambda \in \sigma(A)\backslash q^{-1}\{0\}$. Since the denominator of the second term in 
\eqref{langewurscht1} is continuous and not zero at $\lambda = w$, 
there is a neighbourhood $U(w)$ of $w$ such that also the second term is bounded by $C$ on $U(w)$. 
Therefore,
\begin{multline*}
\left\vert  (\phi(\lambda) - \mu)^{-1}  - \left( \sum_{j=0}^{{\mf d(w)-1}} \phi(w)_j (\lambda-w)^j \, - \mu \right)^{-1} \right\vert \leq \\ \leq C^2
\left\vert  \phi(\lambda)  - \sum_{j=0}^{{\mf d(w)-1}} \phi(w)_j (\lambda-w)^j  \right\vert \leq C^2 D \, \big\vert ( \lambda - w  )^{\mf d(w)} \big\vert,
\end{multline*}
for all $\lambda \in U(w)\setminus \{w\}$, where $D>0$ is a constant which comes from \thref{zerlgeig} applied to $\phi$.

Finally, factoring out the first term and using the definition of $\psi(w)$ we estimate
\eqref{langewurscht2} from above by the following expression, where
for reasons of convenience we set $x_j=0$ for $x\in \bb C^{\mf d(w)+1}$ and $j>\mf d(w)$.
\begin{multline*}
\left\vert \left( \sum_{j=0}^{{\mf d(w)-1}} \phi(w)_j (\lambda-w)^j  \, - \mu  \right)^{-1} -\sum_{j=0}^{{\mf d(w)-1}} \psi(w)_j (\lambda-w)^j \right\vert \leq \\
\begin{array}{l}
\leq \displaystyle C \left\vert 1 - \left(\sum_{j=0}^{{\mf d(w)-1}}   \left(\phi-\mu \mathds{1}_{q,A}\right)\!(w)_j (\lambda-w)^j\right)\!\left(\sum_{j=0}^{{\mf d(w)-1}} \psi(w)_j (\lambda-w)^j\right) \right\vert  \\ \vspace*{-2mm} \\
= \displaystyle C \left\vert 1 - \sum_{j=0}^{{2 \mf d(w)-2}} \left( \sum_{i=0}^j \left(\phi-\mu \mathds{1}_{q,A}\right)\!(w)_i \, \psi(w)_{j-i}  \right)(\lambda-w)^j    \right\vert\\ \vspace*{-2mm} \\
= \displaystyle C \left\vert 1 - 1 - \sum_{j= \mf d(w)}^{{2 \mf d(w)-2}} \left( \sum_{i=0}^j \left(\phi-\mu \mathds{1}_{q,A}\right)\!(w)_i \, \psi(w)_{j-i}  \right)(\lambda-w)^j    \right\vert\\ \vspace*{-2mm} \\
\leq \displaystyle d \, \big\vert ( \lambda - w )^{\mf d(w)} \big\vert    ,
\end{array}
\end{multline*}
\noindent
for a constant $d>0$. \thref{zerlgeig} gives $\psi\in \mc F(q,A)$, and the calculation
\[
	 \left(\phi(A) - \mu I\right) \psi(A) = \big((\phi - \mu \mathds{1}_{q,A})\cdot \psi \big) (A)= 
	 \mathds{1}_{q,A}(A)=I = \psi(A) \left(\phi(A) - \mu I\right) 
\]
shows $\mu \in \rho(\phi(A))$.
\end{proof}

\begin{remark}\thlab{deltinspka}
    Assume that $\phi\in \mc M(q,A)$ vanishes on $\sigma(A)\setminus \{w\}$ and 
    $\phi(w)_{j}=0$ for $j=0,\dots,\mf d(w) - 1$, where
    $w\in q^{-1}\{0\}$. We see from \thref{zerlgeig} that then $\phi\in\mc F(q,A)$. From  
    \thref{einbett2} and \thref{Fform} we infer that then 
    $\phi=g\cdot q_{q,A}$ with $g\vert_{\sigma(A)\setminus \{w\}}=0$.
    
    If $w \not\in \sigma_p(\Theta(A))$, i.e.\ for $\Theta(A)$'s spectral measure $E$ we have $E(\{w\})=0$,
    which is in particular true for non-real $w$,
    then $\int_{\sigma(\Theta(A))} g \, dE=0$ and in turn $\phi(A)=0$.
    
    Consequently, $\phi(A)$ only depends on $\phi(w)_{\mf d(w)}$ if $w \in \sigma_p(\Theta(A))$.
\end{remark}

\begin{proposition}\thlab{projectionprop}
    With the notation and assumptions as in \thref{zwe4578}  
    let $\Delta\subseteq \sigma(A)$ be a Borel subset such that no point from 
    $q^{-1}\{0\}$ is contained in $\partial_{\sigma(A)} \Delta$.
    
    Then $\phi_\Delta$ defined by $\phi_\Delta(z) = \mathds{1}_\Delta(z)$ for 
    $z\in\sigma(A)\setminus q^{-1}\{0\}$ and by 
    $\phi_\Delta(z) = \mathds{1}_\Delta(z) (1,0,\dots,0)$ for $z\in q^{-1}\{0\}\cap\sigma(A)$
    belongs to $\mc F(q,A)$.
    $P:=\phi_\Delta(A)$ is a bounded projection in $\mc K$ with $P^+=\phi_{\overline{\Delta}}(A)$, where 
    $\overline{\Delta}=\{\bar z: z \in \Delta\}$.
    
    Moreover, we have
    $A = (P\times P)(A) \dot + ((I-P)\times (I-P))(A)$ where
    the spectrum of $(P\times P)(A)$ in $P \mc K$ satisfies
    $\sigma((P\times P)(A)) \subseteq \Cl(\Delta)$.
\end{proposition}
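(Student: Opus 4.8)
The plan is to lean entirely on the fact, from \thref{zwe4578}, that $\phi \mapsto \phi(A)$ is a $*$-homomorphism of the \emph{commutative} $*$-algebra $\mc F(q,A)$; this makes all operators constructed below commute and reduces the projection, adjoint and product claims to identities in $\mc F(q,A)$. First I would verify $\phi_\Delta \in \mc F(q,A)$ via \thref{zerlgeig}. Off $q^{-1}\{0\}$ the function $\phi_\Delta$ is just the bounded Borel function $\mathds 1_\Delta$. At a non-isolated $w \in q^{-1}\{0\}\cap\sigma(A)$, the hypothesis $w \notin \partial_{\sigma(A)}\Delta$ provides a neighbourhood $U(w)$ on which $\mathds 1_\Delta$ is constant equal to $\mathds 1_\Delta(w) = \phi_\Delta(w)_0$; since $\phi_\Delta(w)_j = 0$ for $j \geq 1$, expression \eqref{fn8qw3} vanishes on $U(w)\setminus\{w\}$ and is in particular bounded, so $\phi_\Delta \in \mc F(q,A)$. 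The same reasoning gives $\phi_{\overline\Delta}\in\mc F(q,A)$.

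Next, because $(1,0,\dots,0)$ is the multiplicative unit of each $\bb C^{\mf d(w)+1}$, one reads off $\phi_\Delta\cdot\phi_\Delta=\phi_\Delta$ and, using $\overline{\mathds 1_\Delta(\bar\lambda)}=\mathds 1_{\overline\Delta}(\lambda)$, also $\phi_\Delta^\#=\phi_{\overline\Delta}$ in $\mc F(q,A)$. Applying the $*$-homomorphism yields $P^2=P$ and $P^+=\phi_\Delta^\#(A)=\phi_{\overline\Delta}(A)$. For the decomposition I would first note $(P\times P)(A)\subseteq A$: by \thref{zwe4578} we have $P\in A''$, and since $(A-z)^{-1}\in A'$ for $z\in\rho(A)$ (see \thref{bicommrem}), $P$ commutes with $(A-z)^{-1}$; \thref{comminter} together with \eqref{zzue4} then turns $P(A-z)^{-1}=(A-z)^{-1}P$ into $(P\times P)(A)\subseteq A$, and likewise for $I-P$. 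As $A$ is a subspace, the sum $(P\times P)(A)+((I-P)\times(I-P))(A)$ is contained in $A$; conversely each $(x;y)\in A$ equals $(Px;Py)+((I-P)x;(I-P)y)$, giving equality. The sum is direct since the two summands lie in $P\mc K\times P\mc K$ and $(I-P)\mc K\times(I-P)\mc K$, which intersect only in $\{0\}$.

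The spectral inclusion is the substantive part. Write $A_\Delta:=(P\times P)(A)$ on the Banach space $P\mc K$, fix $z_0\in\rho(A)\cap\bb C$, and set $R:=(A-z_0)^{-1}$. Since $PR=RP$, we get $R(P\mc K)\subseteq P\mc K$ and, by \eqref{zzue4}, $R_\Delta:=R|_{P\mc K}=(P\times P)(R)=(A_\Delta-z_0)^{-1}\in B(P\mc K)$. For $\mu\in\bb C\setminus\Cl(\Delta)$ I define $\psi_\mu\in\mc M(q,A)$ by $\psi_\mu(\lambda)=\mathds 1_\Delta(\lambda)/(\lambda-\mu)$ off $q^{-1}\{0\}$, and at each zero $w$ by the germ values of the function which is $1/(\lambda-\mu)$ near $w$ if $w\in\Delta$ and $0$ if $w\notin\Delta$ (well-defined as $w\notin\partial_{\sigma(A)}\Delta$). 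Since $\operatorname{dist}(\mu,\Cl\Delta)>0$, \thref{zerlgeig} gives $\psi_\mu\in\mc F(q,A)$, and $\psi_\mu=\phi_\Delta\cdot\psi_\mu$, so $S:=\psi_\mu(A)$ satisfies $S=PS=SP$. The resolvent identity $\psi_\mu=\phi_\Delta\cdot\rho_0+(\mu-z_0)\,\psi_\mu\cdot\rho_0$ with $\rho_0:=(\tfrac{1}{\cdot-z_0})_{q,A}$ holds off $q^{-1}\{0\}$ and, being a germ identity at each zero (handled by \thref{germeigpost}, with \thref{mcscheis} for the non-isolated ones), holds in $\mc F(q,A)$. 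Applying the homomorphism and restricting to $P\mc K$ gives $S|_{P\mc K}=R_\Delta+(\mu-z_0)\,S|_{P\mc K}R_\Delta$, from which a short computation shows $(I-(\mu-z_0)R_\Delta)(I+(\mu-z_0)S|_{P\mc K})=I_{P\mc K}$, and by commutativity the reverse order as well. Thus $1/(\mu-z_0)\in\rho(R_\Delta)$, whence $\mu\in\rho(A_\Delta)$ by \thref{spectrtrans}. The remaining point $\mu=\infty\notin\Cl(\Delta)$, which forces $\Delta$ bounded, is treated analogously with the conventions of \thref{unendlich}, taking $\psi_\infty(\lambda)=\mathds 1_\Delta(\lambda)\lambda$ and concluding $A_\Delta\in B(P\mc K)$. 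Hence $\sigma(A_\Delta)\subseteq\Cl(\Delta)$.

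I expect the main obstacle to be exactly this last paragraph: ensuring the resolvent identity is valid as an identity in $\mc F(q,A)$ at \emph{all} zeros of $q$ (both the non-isolated real ones and the isolated, possibly non-real, ones), and giving a correct uniform treatment of $\mu=\infty$. The earlier parts are essentially formal once the homomorphism property and the commutation $P\in A''$ with the resolvents are in hand.
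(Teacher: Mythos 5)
Your proposal is correct and follows essentially the same route as the paper: local constancy of $\mathds{1}_\Delta$ at the zeros of $q$ (forced by $q^{-1}\{0\}\cap\partial_{\sigma(A)}\Delta=\emptyset$) gives membership in $\mc F(q,A)$, the $*$-homomorphism of \thref{zwe4578} yields the projection, adjoint and splitting claims, and the spectral inclusion is obtained by using the calculus to invert the compressed resolvent and then appealing to \thref{spectrtrans}. The only cosmetic differences are that you verify membership through the criterion \thref{zerlgeig} where the paper invokes \thref{germbddm} with an auxiliary open set $\Omega$, and that you establish invertibility of $I-(\mu-z_0)R_\Delta$ via the first resolvent identity where the paper directly exhibits the inverse as $f_{q,A}(A)$ with $f\cdot g=\mathds{1}_\Omega$.
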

\begin{proof}
    As $q^{-1}\{0\} \cap \partial_{\sigma(A)} \Delta = \emptyset$ for each
    $w\in q^{-1}\{0\}\cap\sigma(A)$ we can choose a neighbourhood $O(w)$ of $w$, which is open in
    $\bb C\cup\{\infty\}$, such that $O(w)\cap\sigma(A)$ is either totally contained in 
    $\Delta$ or totally contained in $\sigma(A)\setminus \Delta$.
    Setting 
    \[
	\Omega := \Delta \cup \bigcup_{\stackrel{w \in q^{-1}\{0\}
		\cap\sigma(A)}{O(w)\cap\sigma(A)\subseteq \Delta}} O(w) \,,
    \]
    $\mathds{1}_{\Omega}$ is locally constant at each point from $q^{-1}\{0\}\cap\sigma(A)$.
    By \thref{germbddm} the function $(\mathds{1}_{\Omega})_{q,A}$ is well-defined and belongs to
    $\mc F(q,A)$. It is easy to check that $(\mathds{1}_{\Omega})_{q,A} = \phi_\Delta$.
    From $\phi_\Delta^2=\phi_\Delta$ we conclude that $P:=\phi_\Delta(A)$ is a projection.
    Since $\phi_{\Delta}^\# = \phi_{\overline{\Delta}}$, \thref{zwe4578} yields 
    $P^+=\phi_{\overline{\Delta}}(A)$.
    
    From \thref{bicommrem} and \thref{comminter} we obtain $(P\times P)(A) \subseteq A$.
    Since $P\times P$ is a projection on $\mc K\times \mc K$, this implies
    $A = (P\times P)(A) \dot + ((I-P)\times (I-P))(A)$ and in turn, 
    $(A-z)^{-1}\vert_{P\mc K} = ((P\times P)(A) - z I_{\mc K})^{-1}$
    for all $z\in \rho(A)$.
	
    In order to show $\sigma((P\times P)(A)) \subseteq \Cl(\Delta)$, fix $\eta\in \rho(A)$. 
    For $\zeta\not\in \Cl(\Delta)$ we can choose the open sets $O(w)$ from above such that 
    the closure of $O(w)$ does not contain $\zeta$ for all 
    $w \in q^{-1}\{0\} \cap\sigma(A)$ with $O(w)\cap\sigma(A)\subseteq \Delta$.
    Then the functions\footnote{The right hand sides are 
    defined to be zero for $z\not\in \Omega$.} 
    \[
	f(z) = \mathds{1}_{\Omega}(z) \cdot \frac{(z-\eta)(\zeta-\eta)}{\zeta-z} \ \ \text{ and } \ \
	g(z) = \mathds{1}_{\Omega}(z) \cdot \big(\frac{1}{z-\eta} - \frac{1}{\zeta-\eta} \big)
    \]
    are well-defined, bounded and measurable on $(\bb C\cup \{\infty\})\setminus \{\eta\}$. Moreover, they are
    locally holomorphic at all points from $q^{-1}\{0\}\cap\sigma(A)$, and their product
    is $\mathds{1}_{\Omega}(z)$. From \thref{germeigpost} (see also \thref{germeig2}) and \thref{zwe4578} we obtain  
    \[
	P = f_{q,A}(A) \, g_{q,A}(A) = f_{q,A}(A) \, \Big((A-\eta)^{-1} - \frac{1}{\zeta-\eta} \Big) P \,,
    \]
    and in turn $\frac{1}{\zeta-\eta} \in \rho((A-\eta)^{-1}\vert_{P\mc K})$.
    Therefore, $\zeta\in \rho((P\times P)(A))$.
\end{proof}

Clearly, for $\Delta=\sigma(A)$ we have $\phi_\Delta(A)=\mathds{1}_{q,A}(A)=I$.

\begin{remark}\thlab{rieszpro1}
    For $w\in \sigma(A)$ we can apply \thref{projectionprop} to $\{w\}$ and $\sigma(A)\setminus \{w\}$.
    From the spectral assertion in \thref{projectionprop} we see that 
    $\phi_{\{w\}}$ is the Riesz projection corresponding to the isolated subset $\{w\}$.
    
    For a Borel subset $\Delta\subseteq \bb R\cup\{\infty\}$, such that no point from 
    $q^{-1}\{0\}$ is contained in $\partial_{\sigma(A)} \Delta$, the projection
    $\phi_{\Delta}(A)$ is self-adjoint. In fact, these projections constitute the well known
    spectral projections for definitizable operators on Krein spaces originally found by Heinz Langer.
\end{remark}

Finally, we present a result on the existence of other definitizing rational functions.

\begin{corollary}\thlab{anderedefinitz}
  Let $A$ be a self-adjoint definitizable linear relation on a Krein space $\mc K$, 
  let $q$ be a real definitizing rational function, and consider an arbitrary $r\in \bb C_{\rho(A)}(z)$.

  If every $w\in q^{-1} \{0\} \cap \sigma(A)$ is a zero of $r$ with the same or 
  higher multiplicity, and if $\frac{r}{q}$ assumes non-negative values on $\sigma(A) \cap \bb R$, then 
  also $r$ is a definitizing rational function for $A$.
\end{corollary}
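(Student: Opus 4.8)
The plan is to reduce the asserted positivity $[r(A)x,x]\ge 0$ to an ordinary positivity statement on the Hilbert space $\mc V$ from \thref{ht20g95}. First I would set $s:=r/q$ and check that $s\in \bb C_{\rho(A)}(z)$. A point $p$ can be a pole of $s$ only if $q$ vanishes at $p$ to higher order than $r$, or if $r$ has a pole at $p$. Poles of $r$ lie in $\rho(A)$ by assumption; every zero of $q$ lying in $\sigma(A)$ is, by hypothesis, cancelled by a zero of $r$ of at least equal multiplicity; and any zero of $q$ outside $\sigma(A)$ automatically lies in $\rho(A)$ since $\sigma(A)\cup\rho(A)=\bb C\cup\{\infty\}$. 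Hence every pole of $s$ is contained in $\rho(A)$. As $r=s\,q$ as rational functions, \thref{ratcalcul} (the rational calculus being an algebra homomorphism) gives $r(A)=s(A)\,q(A)=s(A)\,TT^+$.

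Next I would use the intertwining relations. Since the rational calculus maps into a commutative algebra, $s(A)$ commutes with $q(A)=TT^+$, so $s^\#(A)=s(A)^+\in (TT^+)'$ by \thref{ratcalcul} together with $A=A^+$. Thus \thref{thetadefeig} and \thref{umktheta} yield $T^+ s^\#(A)=\Theta\big(s^\#(A)\big)\,T^+=s^\#\big(\Theta(A)\big)\,T^+$. Using the defining property of the Krein adjoints of $s(A)$ and of $T$, I then compute
\[
 [r(A)x,x]=[s(A)\,TT^+x,\,x]=[TT^+x,\,s(A)^+x]=[T^+x,\,T^+ s^\#(A)x]=[T^+x,\,s^\#(\Theta(A))\,T^+x]\,,
\]
where the last two inner products are taken in $\mc V$. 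Since $\Theta(A)$ is self-adjoint on the Hilbert space $\mc V$ (see \thref{umktheta} and \thref{ht20g95rem2}), we have $s^\#(\Theta(A))=s(\Theta(A))^*$, so this equals $[s(\Theta(A))\,T^+x,\,T^+x]$.

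Finally I would invoke the spectral theorem \thref{specsarelhi} for $\Theta(A)$: with its spectral measure $E$ one has $s(\Theta(A))=\int_{\sigma(\Theta(A))}s\,dE$, and therefore
\[
 [r(A)x,x]=[s(\Theta(A))\,T^+x,\,T^+x]=\int_{\sigma(\Theta(A))} s\,d\mu_x\,,\qquad \mu_x:=[E(\cdot)\,T^+x,\,T^+x]\ge 0\,.
\]
By \thref{umktheta} we have $\sigma(\Theta(A))\subseteq\sigma(A)$, and self-adjointness forces $\sigma(\Theta(A))\subseteq\bb R\cup\{\infty\}$, so $\sigma(\Theta(A))\subseteq(\sigma(A)\cap\bb R)\cup\{\infty\}$. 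On $\sigma(A)\cap\bb R$ the integrand $s=r/q$ is non-negative by hypothesis, whence the positive measure $\mu_x$ integrates a non-negative function; this gives $[r(A)x,x]\ge 0$ for all $x\in\mc K$, i.e.\ $r$ is definitizing.

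The main obstacle I anticipate is the point $\infty$: if $\infty\in\sigma(\Theta(A))$ then $E(\{\infty\})\ne 0$ contributes an atom $s(\infty)\,\mu_x(\{\infty\})$, while the hypothesis only controls $r/q$ on the finite real spectrum. I would dispatch this as follows. As $\infty\in\sigma(\Theta(A))\subseteq\sigma(A)$, the point $\infty$ is not a pole of $s$, so $s(\infty)$ is finite and, reading $\infty$ via $z\mapsto 1/z$ as in \thref{unendlich}, the rational function $s$ is continuous there. Moreover $T^+T=\Theta(q(A))=q(\Theta(A))\ge 0$ in $\mc V$ shows $q\ge 0$ on $\sigma(\Theta(A))$, and when $\sigma(A)\cap\bb R$ accumulates at $\infty$ the non-negativity of $s$ propagates to $s(\infty)$ by continuity; the remaining degenerate configuration (an isolated spectral point at $\infty$ coming from a nontrivial $\mul A$, see \thref{cortranpo}) is the one place where the argument genuinely needs this extra care, and I would handle it by the same $z\mapsto 1/z$ limiting analysis to conclude $s(\infty)\ge 0$.
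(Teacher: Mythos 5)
Your main line of argument is, in essence, the paper's own proof written out by hand. The paper argues in three lines: the first hypothesis yields the decomposition $r_{q,A}=g\cdot q_{q,A}$ with $g=(r/q)\vert_{\sigma(A)}$ continuous (cf.\ \thref{Fform}), and then \thref{zwe4578} applied to this decomposition (i.e.\ with vanishing rational part) gives $[r(A)x,x]=\int_{\sigma(\Theta(A))}g\,d[ET^+x,T^+x]\geq 0$. Your computation --- $s=\frac{r}{q}\in\bb C_{\rho(A)}(z)$, then $r(A)=s(A)TT^+$, then the intertwining $T^+s^\#(A)=s^\#(\Theta(A))T^+$ from \thref{thetadefeig} and \thref{umktheta}, then the spectral theorem on $\mc V$ --- is a correct unwinding of the identity $r(A)=T\,s(\Theta(A))\,T^+$ that the paper's calculus encapsulates (compare $\Xi\circ\Theta(s(A))=TT^+s(A)$ in \thref{Xidefeig}); you arrive at exactly the same positive integral, so up to that point the two proofs coincide.

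The genuine gap is in your final paragraph. If $\infty$ is an \emph{isolated} point of $\sigma(A)$ and $\mul\Theta(A)\neq\{0\}$, so that $E(\{\infty\})$ (the orthogonal projection onto $\mul\Theta(A)$) is nonzero, then no ``$z\mapsto 1/z$ limiting analysis'' can produce $s(\infty)\geq 0$: there are no points of $\sigma(A)\cap\bb R$ accumulating at $\infty$ over which to take a limit, and the hypotheses put no constraint whatsoever on $s(\infty)$. In fact, read literally, the corollary is false in exactly this configuration: let $\mc K$ be a nontrivial Hilbert space viewed as a Krein space and $A:=\{0\}\times\mc K$; then $A=A^+$, $\rho(A)=\bb C$, $\sigma(A)=\{\infty\}$, and $q:=1$ is a real definitizing rational function, while for $r:=-1$ both hypotheses hold vacuously (since $q^{-1}\{0\}=\emptyset$ and $\sigma(A)\cap\bb R=\emptyset$), yet $[r(A)x,x]=-[x,x]<0$ for $x\neq 0$. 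Here $\mc V=\mc K$, $T=I$, $\Theta(A)=A$ and $E(\{\infty\})=I$, i.e.\ the entire spectral measure sits in the one atom your continuity argument cannot reach. So this case cannot be ``handled with extra care''; the hypothesis has to be read (or strengthened) as non-negativity of $\frac{r}{q}$ on $\sigma(A)\cap(\bb R\cup\{\infty\})$, and with that reading your argument closes immediately, since the atom then contributes $s(\infty)\,[E(\{\infty\})T^+x,T^+x]\geq 0$ and no limiting argument is needed anywhere. To your credit, you spotted a subtlety that the paper itself suppresses: its proof simply asserts ``$g\geq 0$'' from the second hypothesis, which on $\sigma(\Theta(A))\subseteq(\sigma(A)\cap\bb R)\cup\{\infty\}$ is justified everywhere except at this very atom. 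Your treatment of the accumulation case by continuity is fine; only the claimed dispatch of the isolated case is wrong, and unavoidably so.
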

\begin{proof}
    By the first assumption we have $r_{q,A} = g\cdot q_{q,A}$ for a continuous $g: \sigma(A) \to \bb C$. By the 
    second assumption $g \geq 0$. Our functional calculus applied to this decomposition gives 
    \[
	[r(A) x, x] = \int_{\sigma(\Theta(A))} g \, d[E T^+ x , T^+ x] \geq 0 \,.
    \]
\end{proof}

\printbibliography

\end{document}